\numberwithin{equation}{section}
\newtheorem{Theorem}{Theorem}
\newtheorem{Proposition}[Theorem]{Proposition}
\newtheorem{Lemma}[Theorem]{Lemma}
\theoremstyle{definition}
\newtheorem*{Definition}{Definition}
\newtheorem*{Remark}{Remark}
\newtheorem*{Example}{Example}
\newtheorem*{acknowledgements}{Acknowledgements}
\newcommand{\R}{\mathbb R}
\newcommand{\Z}{\mathbb Z}
\newcommand{\N}{\mathbb N}
\font\fiverm=cmr5 \fi
\begin{document}

\def\refname{\centerline{Bibliography}}

\title[Tong's Spectrum for Rosen Continued Fractions]{Tong's Spectrum for Rosen Continued Fractions}

\author[Cornelis {\sc Kraaikamp}]{{\sc Cornelis} KRAAIKAMP}
\address{Cornelis {\sc Kraaikamp}\\
EWI, Delft University of Technology, \\
Mekelweg 4, 2628 CD Delft \\
the Netherlands }
\email{c.kraaikamp@ewi.tudelft.nl}

\author[Thomas A. {\sc Schmidt}]{{\sc Thomas A.}
SCHMIDT \footnote{The second author was supported by NWO
Bezoekersbeurs { B 61-505}.} \break}
\address{Thomas A. {\sc Schmidt}\\
Oregon State University\\
Corvallis, OR 97331\\
USA}
\email{toms@math.orst.edu}

\author[Ionica {\sc Smeets}]{{\sc Ionica} SMEETS}
\address{Ionica {\sc Smeets}\\
Mathematical Institute\\
Leiden University\\
Niels Bohrweg 1, 2333 CA Leiden\\
 the Netherlands}
\email{smeets@math.leidenuniv.nl}

\date{01/09/2006}
\maketitle

\begin{resume}
Dans les ann\'ees 90, J.C.~Tong a donn\'e une borne
sup\'e\-rieu\-re optimale pour le minimum de $k$ coefficients
d'approxi\-mation cons\'e\-cutifs  dans le cas des fractions continues \`a
l'entier le plus proche. Nous g\'en\'eralisons ce type de r\'esultat aux fractions conti\-nues de Rosen. Celles-ci
constituent une famille  infinie d'algorith\-mes de d\'eveloppement en
fractions continues, o\`u  les quotients partiels sont certains entiers alg\'ebriques r\'eels.
Pour chacun de ces  algo\-rithmes nous d\'eterminons la borne
sup\'erieure optimale de la valeur  mini\-male des coefficients d'approximation 
pris en nombres cons\'ecutifs appropri\'es. Nous
donnons aussi des r\'esultats m\'etri\-ques pour des  plages de ``mauvaises" approximations successives de grande lon\-gueur.
\end{resume}
\begin{abstr}
In the 1990s, J.C.~Tong gave a sharp upper bound on the minimum of
$k$ consecutive approximation constants for the nearest integer
continued fractions.  We generalize this to the case of
approximation by Rosen continued fraction expansions. The Rosen
fractions are an infinite set of continued fraction algorithms,
each giving expansions of real numbers in terms of certain
algebraic integers. For each, we give a best possible upper bound
for the minimum in appropriate consecutive blocks of approximation
coefficients. We also obtain metrical results for large blocks of
``bad'' approximations.
\end{abstr}
\section{Introduction}
It is well-known that every $x\in [0,1)\setminus \mathbb{Q}$ has a
unique (regular) continued fraction expansion of the form
\begin{equation}\label{def: RCF}
x=\frac{\displaystyle{1}}{\displaystyle{a_1}
+\frac{\displaystyle{1}}{\displaystyle{a_2}+\ldots
+\frac{\displaystyle{1}}{\displaystyle{a_n}+\ldots}}} = [\,
a_1,\, a_2,  \ldots ,\, a_n, \ldots].
\end{equation}\goodbreak 
Here the partial quotients $a_n$ are positive integers for $n\geq
1$. Finite truncation in~(\ref{def: RCF}) yields the convergents
$p_n/q_n$ of $x$, i.e., for $n\geq 1$
$$
\frac{p_n}{q_n}=\frac{\displaystyle{1}}{\displaystyle{a_1}
+\frac{\displaystyle{1}}{\displaystyle{a_2}+\ldots
+\frac{\displaystyle{1}}{\displaystyle{a_n}}}} = [\, a_1,\,
a_2,\ldots ,\, a_n],
$$
and throughout it is assumed that $p_n/q_n$ is in its lowest
terms. Note that (\ref{def: RCF}) is a shorthand for
$\lim_{n\to\infty} p_n/q_n=x$.

Underlying the regular continued fraction (RCF) expansion
(\ref{def: RCF}) is the map $T\, :\, [0,1)~\rightarrow~[0,1)$,
defined by
$$
T(x)\, = \frac{1}{x} \!\!\! \mod 1 \,= \frac{1}{x}-\left\lfloor
\frac{1}{x}\right\rfloor ,\, x\ne 0; \quad T(0)\, =\, 0.
$$
Here $\left\lfloor \frac{1}{x}\right\rfloor$ denotes the integer
part of $\frac{1}{x}$. The RCF-convergents of $x\in [0,1)\setminus
\mathbb{Q}$ have strong approximation properties. We mention here
that
$$
\left| x -\frac{p_n}{q_n}\right| < \frac{1}{q_n^2},\quad \text{for
$n\geq 0$,}
$$
which implies, together with the well-known recurrence relations
for the $p_n$ and $q_n$, that the rate of convergence of $p_n/q_n$ to $x$
is exponential (see e.g.\ \cite{[DK]}). One thus defines the
approximation coefficients $\theta_n(x)$ of $x$ by $\theta_n =
\theta_n(x)= q_n^2 \left| x - p_n/q_n \right|$, $n\geq 0$. We
usually suppress the dependence on $x$ in our notation.

For the RCF-expansion we have the following classical theorems by
Borel (1905) and Hurwitz (1891) about the quality of the
approximations.
\begin{Theorem}\emph{\textbf{(Borel)}} For every irrational
number $x$, and every $n\geq 1$
$$
\min\{\theta_{n-1},\theta_{n},\theta_{n+1}\} <
\displaystyle{\frac{1}{\sqrt{5}}}.
$$
The constant $1/\sqrt{5}$ is best possible.
\end{Theorem}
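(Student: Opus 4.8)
The plan is to work with the natural extension of the Gauss map $T$, which encodes consecutive approximation coefficients geometrically. Recall the classical fact that if one sets $v_n = q_{n-1}/q_n$ and $t_n = T^n(x) = [a_{n+1}, a_{n+2}, \ldots]$, then $\theta_n = v_n/(1 + v_n t_n)$ and $\theta_{n-1} = t_n/(1 + v_n t_n)$; the pair $(t_n, v_n)$ lives in the square $\Omega = [0,1)^2$ and the dynamics on $\Omega$ is $\mathcal{T}(t,v) = \bigl(T(t), 1/(a_1(t) + v)\bigr)$ where $a_1(t) = \lfloor 1/t \rfloor$. Thus a lower bound $\theta_j \geq c$ for three consecutive indices translates into the assertion that the $\mathcal{T}$-orbit point $(t,v)$ and its forward image both avoid an explicit region of $\Omega$.

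First I would make the reduction precise: $\min\{\theta_{n-1},\theta_n,\theta_{n+1}\} \geq 1/\sqrt5$ would force $(t_n, v_n)$ to satisfy simultaneously $t_n/(1+v_n t_n) \geq 1/\sqrt5$, $v_n/(1+v_n t_n) \geq 1/\sqrt5$, and — after applying $\mathcal{T}$ once and using $a = a_{n+1} \geq 1$ — the analogous inequality for $\theta_{n+1}$. Rewriting $v_n \geq 1/\sqrt5$ and $t_n \geq 1/\sqrt5$ together with $\theta_{n+1} = v_{n+1}/(1+v_{n+1}t_{n+1})$ where $v_{n+1} = 1/(a+v_n)$, $t_{n+1} = T(t_n)$, one gets a system in the two real variables $(t_n, v_n)$ parametrized by the integer $a$. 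I would then show this system has no solution with $0 \le t_n, v_n < 1$: the constraints $t_n \ge 1/\sqrt5$ and $v_n \ge 1/\sqrt5$ already pin the pair into a small corner, and $a = a_{n+1} = \lfloor 1/t_n\rfloor$ is forced to be $1$ or $2$ there; a direct case check on $a \in \{1,2\}$ shows $\theta_{n+1}$ is then too small. Pushing the bound $c$ from $1/\sqrt5$ down continuously, the system acquires solutions exactly at $c = 1/\sqrt5$, which is where the quadratic discriminant vanishes — this is the source of the constant.

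For sharpness I would exhibit $x = [\,\overline{1}\,] = (\sqrt5-1)/2$, the golden mean, for which $(t_n, v_n) \to ((\sqrt5-1)/2, (\sqrt5-1)/2)$ and every $\theta_n \to 1/\sqrt5$; hence the constant cannot be lowered, and the strict inequality in the statement is also seen to be best possible since the value $1/\sqrt5$ is only attained in the limit. The main obstacle I anticipate is organizing the case analysis cleanly: the inequality for $\theta_{n+1}$ mixes the new variable $v_{n+1} = 1/(a+v_n)$ with $t_{n+1} = T(t_n)$, so one must be careful that the region to be excluded is described correctly for each value of $a$, rather than in the routine algebra itself. A cleaner route, which I would pursue if the case analysis gets unwieldy, is purely geometric: identify the subregion of $\Omega$ on which $\min\{\theta_{-1}, \theta_0\} \ge 1/\sqrt5$, check that $\mathcal{T}$ maps its complement into a set still meeting $\{\theta_0 \ge 1/\sqrt5\}^c$, and conclude that no orbit can have three consecutive $\theta$'s all $\ge 1/\sqrt5$; this is the template that will generalize to the Rosen setting and to longer blocks later in the paper.
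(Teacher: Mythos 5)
The paper does not prove this statement: Borel's theorem is quoted as a classical 1905 result in the introduction and is used only as motivation; the paper's own proofs concern the Rosen fractions (Sections 3--4). So there is no ``paper proof'' to compare against. That said, your proposed route is exactly the machinery the paper later deploys for the Rosen case --- natural extension, the region $\mathcal D$ where $\min\{\Theta_{n-1},\Theta_n\} \geq \mathcal H_q$, and the observation that the dynamics flush every orbit out of $\mathcal D$ except for a fixed point with irrational second coordinate, which no orbit $(t_n,v_n)$ can hit --- and your fallback ``cleaner route'' in the last paragraph is precisely this argument. The sharpness claim via the golden mean is also correct.

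There is, however, one concrete error you must fix before the middle paragraph can work. You have the two key formulas reversed: with $v_n = q_{n-1}/q_n$ and $t_n = T^n(x)$, the standard identities (and the ones the paper records in Eq.~(\ref{eq: formulae for theta}) in the $\varepsilon\equiv 1$ case) are
\[
\theta_{n-1} = \frac{v_n}{1+t_n v_n}, \qquad \theta_n = \frac{t_n}{1+t_n v_n},
\]
not the other way around. With your swapped convention, the quantity you then call $\theta_{n+1} = v_{n+1}/(1+t_{n+1}v_{n+1})$ simplifies, via $v_{n+1}=1/(a+v_n)$ and $t_{n+1}=1/t_n - a$, to $t_n/(1+t_n v_n)$ --- which under your labeling is $\theta_{n-1}$, not a new piece of information, so the planned contradiction evaporates. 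Using the correct formulas, one gets $\theta_{n+1} = t_{n+1}/(1+t_{n+1}v_{n+1}) = (1-t_n)(1+v_n)/(1+t_nv_n)$ on the region where $a_{n+1}=1$, and a short computation shows this is $<1/\sqrt5$ everywhere on $\mathcal D$ except at the single point $(g,g)$ with $g=(\sqrt5-1)/2$, which is excluded because $v_n$ is always rational. (Also a small point: on $\mathcal D$ one in fact has $t_n > g$, forcing $a_{n+1}=1$; the case $a=2$ you allow for is vacuous, though checking it does no harm.) With the formulas corrected, your outline is sound and indeed foreshadows Lemma~\ref{lem:FixedAndFlushedEven} and Theorem~\ref{thm:BorelEven} of the paper.
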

Borel's result, together with a yet earlier result by Legendre
\cite{[L]}, which states that if $p, q\in \Z$, $q>0$, and $\gcd
(p,q)=1$, then
$$
\left| x-\frac{p}{q}\right| < \frac{1}{2q^2}\quad
\textrm{ implies that}\quad \left(
\begin{array}{c}
p \\
q
\end{array}\right) = \left( \begin{array}{c}
p_n\\
q_n
\end{array}\right) ,\quad \text{for some $n\geq 0$},
$$
implies the following result by Hurwitz.
\begin{Theorem}\emph{\textbf{(Hurwitz)}}\label{Hurwitz} For every
irrational number $x$ there exist infinitely many pairs of
integers $p$ and $q$, such that
$$
\left| x -\frac{p}{q}\right| < \frac{1}{\sqrt{5}}\frac{1}{q^2}.
$$
The constant $1/\sqrt{5}$ is best possible.
\end{Theorem}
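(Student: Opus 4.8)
The plan is to obtain the two assertions of Hurwitz's theorem from the two results already at hand: the existence half follows from Borel's theorem, and the optimality half follows from Legendre's criterion together with an explicit computation of the approximation coefficients of the golden section.

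First, for the existence of infinitely many good approximations, I would argue by contradiction. Suppose only finitely many indices $m$ satisfy $\theta_m(x) < 1/\sqrt{5}$, and let $N$ be larger than all of them. Applying Borel's theorem with $n = N+1$ forces at least one of $\theta_N$, $\theta_{N+1}$, $\theta_{N+2}$ to be strictly less than $1/\sqrt{5}$, a contradiction. Hence there are infinitely many indices $m$ with $q_m^2\,|x - p_m/q_m| < 1/\sqrt{5}$, that is, $|x - p_m/q_m| < \tfrac{1}{\sqrt{5}}\tfrac{1}{q_m^{\,2}}$; since the denominators $q_m$ are strictly increasing, the pairs $(p,q)=(p_m,q_m)$ are pairwise distinct, which gives the first assertion.

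For optimality, set $g=(\sqrt{5}-1)/2$, whose regular continued fraction expansion is $[\,1,1,1,\ldots\,]$, so that $p_n$ and $q_n$ are consecutive Fibonacci numbers. Using Binet's formula (or equivalently the recursion for $q_ng-p_n$ together with $g^2=1-g$) one computes $q_ng-p_n=(-1)^n g^{\,n+1}$, whence $\theta_n(g)=q_n\,|q_ng-p_n|=\tfrac{1}{\sqrt{5}}\bigl(1+(-1)^n g^{\,2(n+1)}\bigr)$, so in particular $\theta_n(g)\to 1/\sqrt{5}$ as $n\to\infty$. Now fix any constant $c>\sqrt{5}$ and suppose there were infinitely many pairs $(p,q)$, with $q>0$ and $\gcd(p,q)=1$, satisfying $|g-p/q|<\tfrac{1}{c\,q^2}$. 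Since $c>2$, each such pair satisfies $|g-p/q|<\tfrac{1}{2q^2}$, so Legendre's result identifies it with a convergent $p_n/q_n$ of $g$; but then $\theta_n(g)<1/c<1/\sqrt{5}$ for infinitely many $n$, contradicting $\theta_n(g)\to 1/\sqrt{5}$. Hence only finitely many such pairs exist, and the constant $1/\sqrt{5}$ cannot be replaced by any smaller one.

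The existence half is routine once Borel's theorem is granted. The only genuine work is in the optimality half: establishing the identity $q_ng-p_n=(-1)^n g^{\,n+1}$ (equivalently, the limit $\theta_n(g)\to 1/\sqrt{5}$), and then the bookkeeping needed to invoke Legendre's criterion — in particular observing that $c>\sqrt{5}>2$ is exactly the hypothesis that makes Legendre applicable, and reducing an arbitrary approximating fraction to lowest terms before applying it.
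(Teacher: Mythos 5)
Your proposal is correct and follows precisely the route the paper indicates: existence from Borel's theorem, and optimality from Legendre's criterion combined with the convergents of the golden mean. The paper merely asserts that Borel together with Legendre implies Hurwitz without carrying out the verification; your explicit computation $\theta_n(g)=\frac{1}{\sqrt 5}\bigl(1+(-1)^n g^{2(n+1)}\bigr)\to 1/\sqrt 5$ and the reduction-to-lowest-terms bookkeeping supply exactly the details the paper leaves to the reader.
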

By removing all irrational numbers which are equivalent to the
`golden mean' $g=\frac{1}{2}(\sqrt{5}-1)$ (i.e., those irrationals
whose RCF-expansion consists of $1$s from some moment on), we have
that
$$
\left| x -\frac{p}{q}\right| < \frac{1}{\sqrt{8}}\frac{1}{q^2},
$$
for infinitely many pairs of integers $p$ and $q$. These constants
$1/\sqrt{5}$ and $1/\sqrt{8}$ are the first two points in the
so-called \emph{Markoff spectrum}; see \cite{[CF]} or \cite{[B]}
for further information on this spectrum, and the related Lagrange
spectrum.

Note that the theorem of Borel does not suffice to prove Hurwitz's
theorem; one needs Legendre's result to rule out the existence of
rationals $p/q$ which are not RCF-convergents, but still satisfy
$|x -p/q|< 1/(\sqrt{5}\, q^2)$.

In \cite{[T1],[T2]}, Tong generalized Borel's results to the
nearest integer continued fraction expansion (NICF). These are
continued fractions of the form
$$
x=\frac{\displaystyle{\varepsilon_1}}{\displaystyle{b_1}
+\frac{\displaystyle{\varepsilon_2}}{\displaystyle{b_2}+\ldots
+\frac{\displaystyle{\varepsilon_n}}{\displaystyle{b_n}+\ldots}}},
$$
generated by the operator $T_\frac12\, :\,
\left[-\frac12,\frac12\right)\rightarrow
\left[-\frac12,\frac12\right)$, defined by
\begin{equation}\label{nicf-map}
T_\frac12(x)\, = \frac{\varepsilon}{x} - \left\lfloor
\frac{\varepsilon }{x}  + \frac12
\right\rfloor ,\, x\ne 0; \quad T(0)\, =\, 0,
\end{equation}
where $\varepsilon$ denotes the sign of $x$. Since the
NICF-expansion of any number $x$ can be obtained from the
RCF-expansion via a process called singularization (see
\cite{[DK]} or \cite{[IK]} for details), the sequence of
NICF-convergents $(r_k/s_k)_{k\geq 0}$ is a subsequence
$(p_n/q_n)_{n\geq 0}$ of the sequence of RCF-conver\-gents  of
$x$. Due to this, the approximation by NICF-convergents is faster;
see e.g.\ \cite{[A]}, or \cite{[IK]}. In \cite{[BJW]} it was shown
that the approximation by NICF-convergents is also closer; for
almost all $x$ one has that
$$
\lim_{k\to \infty} \frac{1}{k}\sum_{i=0}^{k-1}\vartheta_k=
\frac{\sqrt{5}-2}{2 \log G}= 0.24528\dots
$$
 \text{ whereas}
 $$ \lim_{n\to
\infty} \frac{1}{n}\sum_{i=0}^{k-1}\theta_i= \frac{1}{4\log 2} =
0.36067\dots ,
$$

where $\vartheta_k=\vartheta_k(x)=s_k^2\left| x-r_k/s_k\right|$ is
the $k$th NICF-approximation coefficient of $x$, and $G=g+1$.\\
In contrast to this, it was shown in \cite{[JK]} that for almost
every $x$ there are infinitely many arbitrary large blocks of
NICF-approximation coefficients
$\vartheta_{n-1},\ldots,\vartheta_{n+k}$, which are all larger
than $1/\sqrt{5}$. In spite of this, it is also shown in
\cite{[JK]} that for \emph{all} irrational numbers $x$ there exist
infinitely many $k$ for which $\vartheta_k<1/\sqrt{5}$.

In~\cite{[T1]} and~\cite{[T2]}, Tong sharpened the results from
\cite{[JK]}, by showing that for the NICF there exists a
`pre-spectrum,' i.e., there exists a sequence of constants
$(c_k)_{k\geq 1}$, monotonically decreasing to $1/\sqrt{5}$, such
that for all irrational numbers $x$ the minimum of any block of
$k+2$ consecutive NICF-approximation coefficients is smaller than
$c_k$.
\begin{Theorem}\textbf{\emph{(Tong)}} For every irrational number $x$
and all positive integers $n$ and $k$ one has
$$
\min\{\vartheta_{n-1},\vartheta_{n},\ldots,\vartheta_{n+k}\} <
\frac{1}{\sqrt{5}} +
\frac{1}{\sqrt{5}}\left(\frac{3-\sqrt{5}}{2}\right)^{2k+3}.
$$
The constant $c_k=\frac{1}{\sqrt{5}} +
\frac{1}{\sqrt{5}}\left(\frac{3-\sqrt{5}}{2}\right)^{2k+3}$ is
best possible.
\end{Theorem}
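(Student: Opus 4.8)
The plan is to read off the approximation coefficients from the two--dimensional natural extension of the NICF map. Recall (this underlies the references \cite{[DK]}, \cite{[IK]}) that there is a domain $\Omega\subset[-\tfrac12,\tfrac12)\times[0,1]$ on which the map $\mathcal T(t,v)=\bigl(T_{\frac12}(t),\,1/(b+\varepsilon v)\bigr)$, with $(\varepsilon,b)$ the first NICF digit of $t$, is the natural extension of $T_{\frac12}$, and that for $x$ irrational, writing $(t_n,v_n)=\mathcal T^{\,n}(x,0)$ (so $t_n=T_{\frac12}^{\,n}x$ and $v_n=s_{n-1}/s_n$), one has
$$\vartheta_{n-1}=\frac{v_n}{1+t_nv_n},\qquad \vartheta_n=\frac{|t_n|}{1+t_nv_n}.$$
Hence the block $\vartheta_{n-1},\vartheta_n,\dots,\vartheta_{n+k}$ is $\ge c$ entrywise exactly when the $k+1$ points $(t_{n+j},v_{n+j})$, $0\le j\le k$, all satisfy $|t_{n+j}|\ge c\,(1+t_{n+j}v_{n+j})$ and, additionally, the first of them satisfies $v_n\ge c\,(1+t_nv_n)$. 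Writing $R_c\subset\Omega$ for the region cut out by the pair of inequalities $|t|\ge c(1+tv)$ and $v\ge c(1+tv)$, the theorem becomes the assertion that a $\mathcal T$--orbit cannot remain in $R_c$ for $k+1$ consecutive steps once $c\ge c_k$, and that $c_k$ is the exact threshold.

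The geometric input is that for $c$ close to $1/\sqrt5$ the region $R_c$ is a thin set clustered about the point $(t_\ast,v_\ast)=\bigl(\tfrac{-3+\sqrt5}{2},\tfrac{3-\sqrt5}{2}\bigr)$, the corner of $R_{1/\sqrt5}$; here $t_\ast$ is the quadratic surd solving $t_\ast=-1/(3+t_\ast)$ (equivalent to the golden mean, with purely periodic NICF expansion with all digits $(\varepsilon,b)=(-1,3)$), and along its orbit $\vartheta_n\equiv1/\sqrt5$. One checks that $R_c$ lies --- apart from bounded ``caps'' near its two outer boundary arcs --- inside the cylinder $\Delta(-1,3)=\{(t,v):t\text{ has first NICF digit }(-1,3)\}$, and that the only way for a point of $R_c$ to have its $\mathcal T$--image again in $R_c$ is through the digit $(-1,3)$. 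Consequently a block of $k+2$ bad approximation coefficients forces the digits $(\varepsilon_{n+1},b_{n+1}),\dots,(\varepsilon_{n+k},b_{n+k})$ all to equal $(-1,3)$, while the two flanking digits $(\varepsilon_n,b_n)$ and $(\varepsilon_{n+k+1},b_{n+k+1})$ range over a short explicit list.

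On $\Delta(-1,3)$ the map $\mathcal T$ acts on the first coordinate by $\phi(z)=-1/(3+z)$, with attracting fixed point $z=t_\ast$ and multiplier $\phi'(t_\ast)=\bigl(\tfrac{3-\sqrt5}{2}\bigr)^{2}$, and on the second coordinate by the corresponding contracting branch. Conjugating $\phi$ to a linear map, composing $k$ such steps, and then optimizing over the two admissible end digits, one computes that the largest $c$ for which an orbit can spend $k+1$ steps in $R_c$ is exactly $\tfrac{1}{\sqrt5}+\tfrac{1}{\sqrt5}\bigl(\tfrac{3-\sqrt5}{2}\bigr)^{2k+3}$: the $2k$ in the exponent comes from the $k$ golden digits (each contributing the factor $\bigl(\tfrac{3-\sqrt5}{2}\bigr)^{2}$) and the extra $3$ from the two ``half--steps'' at entry and exit. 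Since this value is a supremum that is not attained, one gets the \emph{strict} inequality of the theorem.

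For the optimality of $c_k$ I would, conversely, build for each $k$ a one--parameter family of quadratic irrationals whose NICF expansion is ``a prefix, then $k$ copies of $(-1,3)$, then a tail,'' with the prefix chosen so that $v_n\to v_\ast$ and the tail so that $t_{n+k}\to t_\ast$ along the family; a direct computation then gives $\min\{\vartheta_{n-1},\dots,\vartheta_{n+k}\}\to c_k$ from below, whence $c_k=\sup_{x,n}\min\{\vartheta_{n-1},\dots,\vartheta_{n+k}\}$. The main obstacle throughout is the combinatorial--geometric step of the second paragraph: one must describe $R_c$ and the cylinder partition of the (slightly awkward) NICF natural--extension domain precisely enough to exclude every non--golden digit in the interior of a bad block and to control the finitely many admissible configurations at its two ends --- and it is exactly this endpoint bookkeeping that produces the ``$2k+3$'' rather than a cleaner exponent.
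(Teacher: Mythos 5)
The paper does not actually prove this theorem: it states it as Tong's result, cites \cite{[T1],[T2]}, and notes that \cite{[HK]} gave the geometric, natural-extension derivation that the present paper then generalizes to Rosen fractions. So there is no in-paper proof to match yours against; the relevant comparison is with the strategy the paper uses for its own even- and odd-$q$ Tong spectra (Sections~3 and~4), which is precisely the strategy you sketch. Your ingredients are all the right ones: the formulas $\vartheta_{n-1}=v_n/(1+t_nv_n)$, $\vartheta_n=|t_n|/(1+t_nv_n)$; the ``bad'' region $R_c$ (the paper's $\mathcal D$); the fixed point $(t_\ast,v_\ast)=\bigl(\tfrac{-3+\sqrt5}{2},\tfrac{3-\sqrt5}{2}\bigr)$ of the golden branch; the multiplier $\phi'(t_\ast)=\bigl(\tfrac{3-\sqrt5}{2}\bigr)^2$; and the observation that the only digit that keeps the orbit inside $R_c$ is $(-1,3)$, which is exactly the paper's flushing mechanism via the subregion $\mathcal A$.

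That said, what you submit is a plan, not a proof, and the gap you yourself flag is the actual content. Where you say ``one checks that $R_c$ lies \dots\ inside the cylinder $\Delta(-1,3)$'' and ``optimizing over the two admissible end digits, one computes \dots,'' the paper's analogue is the entire chain of Lemma~\ref{th: shape D}, the table of constant-digit subregions, Lemma~\ref{lem: theta's on region (III)}, the bijection $\mathcal T\colon\mathcal A\to\mathcal D_1$, the explicit $\tau_k$ as $f_q$-preimages of the flushing line, and finally the extremal computation showing $\Theta_n$ on $\mathcal A_{k-1}$ is maximized at the corner $(\tau_{k-1},\lambda-1)$ with the two-sided orbit check that no neighboring $\Theta$ undercuts it. None of that bookkeeping is carried out in your sketch. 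Two further points you should be aware of: first, your heuristic ``$2k$ from $k$ golden digits plus $3$ from half-steps at entry and exit'' is not an argument --- in the paper's framework the exponent falls out of writing $\tau_{k}$ explicitly and then evaluating $\frac{-\tau_{k-1}}{1+(\lambda-R)\tau_{k-1}}$, and you would need to do the analogous NICF computation to confirm $2k+3$ rather than assert it. Second, for optimality the paper does not construct a family of quadratic irrationals converging to the bound; it instead shows the supremum of the relevant $\Theta_n$ over $\mathcal A_{k-1}$ equals $c_{k-1}$ and is a corner value unattainable by any orbit of a $G_q$-irrational (this is where Lemma~\ref{lem:FixedAndFlushedEven}(ii) enters). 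Your converging-family idea can be made to work, but it is extra machinery; the cleaner route is the paper's: compute the extremum on the region, then argue it is not attained.
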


In~\cite{[HK]}, Hartono and Kraaikamp showed how Tong's result
follows from a geometrical approach based on the natural extension
of the NICF. We further this approach to find Tong's spectrum for
an infinite family of continued fractions generalizing the NICF;
these \emph{Rosen fractions} are briefly described in the next
section.

Although the appropriate terms are only defined in the following
section, the reader may wish to compare Tong's Theorem with the
following, whose proof appears in Section ~\ref{sec:
even-indices}.   (The constants $\tau_{k}$ are given in the
statement of Theorem \ref{th: flushing}.)

\begin{Theorem}
\label{th: tong even} Fix an even $q = 2p$, with $p \ge 2$.  For
every $G_q$-irrational number $x$ and all positive $n$ and $k$,
one has
$$
\min \{ \Theta_{n-1},\Theta_n,\ldots,\Theta_{n+k(p-1)}\} <
\frac{-\tau_{k-1}}{1+(\lambda-1)\tau_{k-1}}.
$$
The constant $c_{k-1}=\frac{-\tau_{k-1}}{1+(\lambda-1)\tau_{k-1}}$
is best possible.
\end{Theorem}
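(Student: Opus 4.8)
The plan is to follow the geometric strategy of Hartono--Kraaikamp, working with the natural extension of the Rosen map $T_q$ for even $q=2p$. First I would recall that the approximation coefficients $\Theta_{n-1}$ and $\Theta_n$ can be read off as coordinates of the orbit of a point $(t_n, v_n)$ in the natural extension domain $\Omega_q$, via the standard formulas $\Theta_{n-1} = v_n/(1+t_n v_n)$ and $\Theta_n = t_n/(1+t_n v_n)$ (up to the signs appropriate to the Rosen setting). Thus a lower bound on $\min\{\Theta_{n-1},\dots,\Theta_{n+k(p-1)}\}$ being violated translates into the orbit $(t_n, v_n), (t_{n+1}, v_{n+1}), \dots$ staying inside a region $R_c \subset \Omega_q$ on which all the relevant $\Theta$'s are $\ge c$. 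The theorem amounts to showing that for $c = c_{k-1}$ this trapping region contains no full orbit segment of the required length, while for any smaller constant it does.

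The key steps, in order: (1) Describe $\Omega_q$ explicitly for even $q$ — it is a union of rectangles, and the map on the second (``past'') coordinate is the familiar $v \mapsto 1/(\lambda a + \varepsilon v)$ type Möbius action by the digits. (2) Identify the ``bad region'' $B_c$ where $\Theta_{n-1} \ge c$, i.e. $v/(1+tv) \ge c$, as a sub-rectangle region, and note that staying bad for one more step forces the digit $a_n$ to be the extreme admissible digit, which moves $v$ to a definite interval; iterate this. Because $q$ is even there is an asymmetry between the cylinders, and it takes $p-1$ steps of the $T_q$-orbit to return to the ``worst'' cell — this is the source of the block length $k(p-1)$. (3) Set up the recursion for the sequence of $v$-values under repeatedly taking the extreme digit; this should be a Möbius recursion whose fixed point is a quadratic irrational in $\lambda$, and whose iterates are governed by the constants $\tau_k$ from Theorem \ref{th: flushing} (the ``flushing'' values). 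Show that the orbit can remain in $B_c$ for $k$ cycles of length $p-1$ precisely when $c$ is below the threshold $\frac{-\tau_{k-1}}{1+(\lambda-1)\tau_{k-1}}$; at $c = c_{k-1}$ the region degenerates to a single exceptional orbit which fails to be bad at the last step, which gives the strict inequality. (4) For sharpness, exhibit the $G_q$-irrational whose $T_q$-orbit is (eventually) this extremal periodic orbit, realizing $\min = c_{k-1}$ in the limit, so no smaller constant works.

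The main obstacle I expect is Step (2)--(3): controlling exactly how many $T_q$-steps the orbit must spend cycling through the cells of $\Omega_q$ before it can be forced out of the bad region, and doing the bookkeeping uniformly in $p$. For the NICF ($q=3$, or rather the analogous small case) this is the elementary ``singularization area'' argument, but for general even $q$ the natural extension has more pieces, the admissible-digit constraints (no two consecutive extreme digits of a certain type, the Rosen analog of the $\frac{p-1}{2}$-type restrictions) interact with the geometry, and one must check that the worst case really is the periodic ``all extreme digit'' orbit rather than some other combinatorial pattern. Establishing that the relevant Möbius recursion is monotone — so that the extremal orbit is genuinely the supremum over all bad orbits — is the crux; once monotonicity and the identification of the fixed point with the $\tau_k$-recursion are in hand, the bound $\frac{-\tau_{k-1}}{1+(\lambda-1)\tau_{k-1}}$ and its optimality drop out. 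I would also need to handle the boundary/endpoint behaviour of $\Omega_q$ carefully, since the sharp constant is attained only in a limiting sense and the strictness of the inequality hinges on a measure-zero set of orbits lying on the boundary of the trapping region.
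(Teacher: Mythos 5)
Your plan follows the same geometric strategy the paper actually uses: work in the natural extension $\Omega_q$, express $\Theta_{n-1},\Theta_n$ via the $(t_n,v_n)$ coordinates, reduce to the subregion where both are large (the paper's $\mathcal D$, and within it $\mathcal A$), exploit the $p-1$-step cycling of $\mathcal A$ through the cells of $\Omega_q$, identify the flushing thresholds $\tau_k$ as a Möbius recursion (preimages of $-2/(3\lambda)$ under $f_q$), use monotonicity of $(t,v)\mapsto -t/(1+tv)$ to locate the maximum at the upper-left corner $(\tau_{k-1},\lambda-1)$, and verify sharpness by showing the extremal corner is a $G_q$-irrational boundary point approached but never hit by actual orbits. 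This is precisely the paper's proof (building on Theorem~\ref{th: flushing} and Lemma~\ref{lem: theta's on region (III)}), so no substantive difference to report; the only small slip is that the Möbius recursion governing $\tau_k$ lives on the $t$-coordinate (preimages under $f_q$), not on the $v$-coordinate as you wrote.
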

We prove an analogous result for all odd indices of these $G_{q}$
in Section~\ref{sec: odd-indices}. In both cases, we prove a
Borel-type result.  Furthermore, our approach allows us to give
various metric results.

\section{Rosen continued fractions}\label{sec:Rosen fractions}
In 1954, David Rosen (see~\cite{[R]}) introduced a family of
continued fractions now bearing his name. The Rosen fractions form
an infinite family of continued fractions generalizing the NICF.
Although Rosen introduced his continued fractions to study certain
Fuchsian groups, we are only concerned with
their Diophantine approximation properties.

Define $\lambda_q = 2 \cos \frac{\pi}{q}$ for each $q \in
\{3,4,\dots\}$. For $q$ fixed,  to simplify notation we usually
write $\lambda$ for $\lambda_q$. For each $q$ the \emph{Rosen} or
\emph{$\lambda$-expansion} ($\lambda$CF) of $x$ is found by using
the map $f_q \, :\,
\left[-\frac{\lambda}{2},\frac{\lambda}{2}\right)\rightarrow
\left[-\frac{\lambda}{2},\frac{\lambda}{2}\right)$, defined by
\begin{equation}\label{eq: def LCF operator}
f_q(x)=\frac{\varepsilon}{x} - \lambda r(x),\, x\ne 0;
\quad f_q(0)\, =\, 0,
\end{equation}
where $\displaystyle{r(x)=\left\lfloor \frac{\varepsilon}{\lambda
x} + \frac12 \right\rfloor}$ and $\frac{\varepsilon}{x}=1/|x|$. We usually write $r$ instead of
$r(x)$. Since $\lambda_3=1$, we see that for $q=3$ the map $f_q$
is the NICF-operator $T_{\frac{1}{2}}$ from (\ref{nicf-map}). For
$x\in [-\lambda /2, \lambda /2)$, the map $f_q$ yields a continued
fraction of the form
$$
\begin{array}{lllll}
x= \frac{\displaystyle{\varepsilon_1}}{\displaystyle{r_1\lambda}+
\frac{\displaystyle{\varepsilon_2}}{\displaystyle{r_2\lambda}+\ldots
+\frac{\displaystyle{\varepsilon_n}}{\displaystyle{r_n\lambda}+\ldots}}}
=:  [\varepsilon_1:r_1,\varepsilon_2:r_2,\ldots,
\varepsilon_n:r_n,\ldots ],
\end{array}
$$
where $\varepsilon_i \in \{\pm 1\}$ and $r_i \in \N$. As usual,
finite truncations yield the convergents $R_n/S_n$, for $n\geq 0$,
i.e., $R_n/S_n=[\varepsilon_1:r_1,\varepsilon_2:r_2,\ldots,
\varepsilon_n:r_n]$. The (Rosen) approximation coefficients of $x$
are defined by
$$
\Theta_n=\Theta_n(x)=S_n^2\left| \, x-\frac{R_n}{S_n}\right|
,\quad \text{for $n\geq 0$}.
$$
For $x\in [-\lambda /2,\lambda /2)$, we define the \emph{future}
($t_n$) and the \emph{past} ($v_n$) of $x$ at time $n$ by
$$
t_n =
[\varepsilon_{n+1}:r_{n+1},\varepsilon_{n+2}:r_{n+2},\ldots],\quad
v_n = [1:r_n,\varepsilon_{n}:r_{n+1},\ldots,\varepsilon_2:r_1].
$$
The map $f_{q}$ acts as a one-sided shift on the Rosen expansion
of $x$: $f_q^n(x)=t_n$. We define the natural extension operator
to keep track of both $t_n$ and $v_n$.

\begin{Definition} \label{natural extension even} For a fixed $q$
the natural extension map $\mathcal{T}$is given by
\[ \mathcal{T}(x,y) = \left( f_q(x), \frac{1}{r\lambda + \varepsilon y} \right).\]
\end{Definition}
In \cite{[BKS]} it was shown that for every $q\geq 3$ there exists
a region $\Omega_q\subset \R^2$, for which $\mathcal{T}: \Omega_q
\to \Omega_q$ is bijective almost everywhere (with respect to an
invariant measure, see Equation ~(\ref{def: measure even}), that
is absolutely continuous with respect to Lebesgue measure). In
Section~\ref{sec: even-indices} (for $q$ even) and
Section~\ref{sec: odd-indices} (for $q$ odd) we recall the exact
form of $\Omega_q$. See also~\cite{[N1]}, where $\Omega_q$ was
obtained for $q$ even.

For $ x= [\varepsilon_1:r_1,\varepsilon_2:r_2,\ldots ]$ one has
$\mathcal{T}^n(x,0) = (t_n,v_n)$. The approximation coefficients
of $x$ can be given in terms of $t_n$ and $v_n$ (see also
\cite{[DK]}) as
\begin{equation} \label{eq: formulae for theta}
\Theta_{n-1}=\frac{v_n}{1+t_nv_n}, \quad \Theta_{n}=\frac{\varepsilon_{n+1}t_n}{1+t_nv_n}.
\end{equation}

For simplicity, we say that a real number $r/s$ is a {\em
$G_q$-rational} if it has finite Rosen expansion, all other real
numbers are called \emph{$G_q$-irrationals}. In~\cite{[HS]}, Haas
and Series derived a Hurwitz-type result using non-trivial
hyperbolic geometric techniques. They showed that for every
$G_q$-irrational $x$ there exist infinitely many $G_q$-rationals
$r/s$, such that $\Theta(x,r/s) \leq \mathcal{H}_q$, where
$\mathcal{H}_q$ is given by
$$
\mathcal{H}_q=\begin{cases}
\displaystyle{\frac{1}{2}} & \quad \textrm{if $q$ is even,}\\
\displaystyle{\frac{1}{2\sqrt{(1-\frac{\lambda}{2})^2+1}}} & \quad \textrm{if $q$ is odd.}\\
\end{cases}
$$
In this paper we derive a Borel-type  result, by showing that for
every $G_q$-irrational $x$ there are infinitely many $n\geq 1$
such that $\Theta_n \leq \mathcal{H}_q$. The even and odd case
differ and we treat them separately. In both cases we focus on
regions where
$\min\{\Theta_{n-1},\Theta_n,\ldots\}<\mathcal{H}_q$.

In fact,  the Borel-type result we derive does not immediately
imply the Hurwitz-type result of Haas and Series. Nakada
\cite{[N2]} showed that the Legendre constant $L_q$ is smaller
than $\mathcal{H}_q$ (recall that for the RCF this Legendre
constant is $1/2$, thus is larger than the Hurwitz constant
$1/\sqrt{5}$). Still, the Haas and Series results can be proved
using continued fraction properties by means of a map which yields
the Rosen-convergents and the so-called first medians;
see~\cite{[KNS]}.

\section{Tong's spectrum for even indices $q = 2p$}\label{sec: even-indices}
In this section $q$ is even, we fix $q = 2p$. The region of the
natural extension $\Omega_q$ is the smallest region where
$\mathcal{T}$ is bijective. Usually we write $\Omega$ instead of
$\Omega_q$. We have the following result from~\cite{[BKS]}.
\begin{Theorem}
\label{def: Omega even}\emph{\textbf{(\cite{[BKS]})}} The domain $\Omega$
upon which $\mathcal{T}$ is bijective is given by
 \[ \Omega = \bigcup_{j=1}^p J_j \times K_j. \]
Here $J_j$ is defined as follows: Let $\phi_j=T^j\left(
-\frac{\lambda}{2} \right)$, then $J_j=[\phi_{j-1},\phi_{j})$ for
$j \in \{1,2,\ldots,p-1 \}$ and $J_p = \left[ 0,\frac{\lambda}{2}
\right)$. Further, $K_j=[0,L_j]$ for $j \in \{1,2,\ldots,p-1 \}$
and $K_p = [0,R]$, where $L_j$ and $R$ are derived from the
relations
\begin{equation*}
\begin{cases}
({\mathcal R}_0): \qquad
R\, =\, \lambda - L_{p-1},\\
({\mathcal R}_1): \qquad
L_1\, =\, 1/(\lambda + R),\\
({\mathcal R}_j): \qquad L_j\, =\, 1/(\lambda -L_{j-1})\qquad \text{for }\,
j\in \{ 2,\cdots ,p-1 \},\\
({\mathcal R}_p): \qquad R\, =\, 1/(\lambda -L_{p-1}).
\end{cases}
\end{equation*}
\end{Theorem}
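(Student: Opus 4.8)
The plan is to verify the asserted form of $\Omega$ by the standard ``tiling'' argument for natural extensions of continued-fraction maps. One partitions $\Omega$ along the cylinder sets of $f_q$; on each cylinder $\mathcal{T}$ is a fibred product of a monotone real branch with a M\"obius map in the $y$-variable, so it carries that slab injectively onto a rectangle. The goal is to show these image rectangles tile $\Omega$: their pairwise disjointness (mod $0$) gives injectivity of $\mathcal{T}$ on $\Omega$, the fact that their union is $\Omega$ gives surjectivity, and together these give bijectivity off a Lebesgue-null set, which is the claim. (To \emph{find} the constants $L_j,R$ one would instead run the algorithm forward, pushing an initial fibre through $\mathcal{T}$ and reading off the constraints the fibre lengths must satisfy; here we only verify the answer.)

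First I would record the branch structure of $f_q$. On $\Delta_{\varepsilon,r}=\{x:(\varepsilon(x),r(x))=(\varepsilon,r)\}$ one has $f_q(x)=\varepsilon/x-\lambda r$ and the second coordinate of $\mathcal{T}$ is $h_{\varepsilon,r}(y)=1/(\lambda r+\varepsilon y)$. Since $f_q(x)$ depends only on $|x|$, the cylinder partition is symmetric about $0$. A short computation shows the forward orbit of the left endpoint is finite: with $\phi_0=-\lambda/2$ and $\phi_j=f_q(\phi_{j-1})$ one gets $\phi_0<\phi_1<\cdots<\phi_{p-1}=0$ (indeed $\phi_{p-2}=-1/\lambda$, so $\phi_{p-1}=\lambda-\lambda=0$), and the identity $\lambda=2\cos(\pi/q)$ is exactly what makes the orbit terminate after $p-1$ steps. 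Every cylinder disjoint from this orbit and its reflection is \emph{full} --- its branch maps it onto all of $[-\lambda/2,\lambda/2)$ --- and only the finitely many cylinders abutting $\{\phi_1,\dots,\phi_{p-1}\}$ or their reflections fail to be full. The intervals $J_1,\dots,J_{p-1}$ are the pieces this orbit cuts from $[-\lambda/2,0)$, and $J_p=[0,\lambda/2)$.

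Next I would check $\mathcal{T}$-invariance fibre by fibre. For $(x,y)\in J_j\times K_j$ one computes $\mathcal{T}(x,y)=(f_q(x),h_{\varepsilon(x),r(x)}(y))$ and asks that the first coordinate land in some $J_i$ and the second in $K_i$. Over the full cylinders the fibre is the constant interval $K_p=[0,R]$ and the slab-images merely stack up to cover $\Omega$ there, so every genuine constraint comes from the non-full cylinders abutting $\pm\lambda/2$. Tracking which branch-image lies over which $J_i$, with which endpoints, and imposing that over each $J_i$ the images fit together with no gap and no overlap, produces exactly the closed system $(\mathcal{R}_0)$--$(\mathcal{R}_p)$: the relations $(\mathcal{R}_1)$, $(\mathcal{R}_j)$ for $2\le j\le p-1$, and $(\mathcal{R}_p)$ are the statements $L_1=h_{+1,1}(R)$, $L_j=h_{-1,1}(L_{j-1})$, $R=h_{-1,1}(L_{p-1})$ coming from the $r=1$ branches, and $(\mathcal{R}_0)$, $R=\lambda-L_{p-1}$, is the remaining matching condition that closes the cycle. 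Injectivity of $\mathcal{T}$ on $\Omega$ is then automatic, since $f_q$ is injective on each cylinder, each $h_{\varepsilon,r}$ is a M\"obius bijection, and two image slabs can overlap only if they come from cylinders with equal $f_q$-image, which the matching excludes.

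The main obstacle is the analysis near $\pm\lambda/2$: pinning down exactly which digits are admissible on the short intervals $J_1,\dots,J_{p-1}$, and then verifying that the resulting system $(\mathcal{R}_0)$--$(\mathcal{R}_p)$ is consistent with a unique positive solution, i.e.\ that the chain of fibre lengths really closes up after $p-1$ steps. Combining $(\mathcal{R}_0)$ with $(\mathcal{R}_p)$ forces $R=1$, so the question reduces to checking that the iteration $L_1=1/(\lambda+1)$, $L_j=1/(\lambda-L_{j-1})$ reaches $L_{p-1}=\lambda-1$ after $p-1$ steps. I would handle this by recognising $L\mapsto 1/(\lambda-L)$ as the elliptic M\"obius map of trace $\lambda=2\cos(\pi/q)$: because $q=2p$, its $p$-th power is an involution with fixed points $e^{\pm i\pi/q}$, which forces that $p$-th power to send $-1$ to $1$, whence $L_{p-1}=\lambda-1$ and $R=1$ as needed. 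Once this closure is established the tiling is complete, so $\mathcal{T}(\Omega)=\Omega$ mod $0$ with $\mathcal{T}$ injective; minimality of $\Omega$ then follows because deleting any sub-arc of a fibre would leave part of $\Omega$ with no $\mathcal{T}$-preimage.
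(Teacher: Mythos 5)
The paper offers no proof of this theorem; it is quoted verbatim from \cite{[BKS]} (and the even-index domain also appears in \cite{[N1]}). So there is no in-text argument for you to be compared against, and I can only assess your plan on its own merits. Your overall strategy is the right one and is essentially what \cite{[BKS]} does: decompose $\Omega$ along the cylinders of $f_q$, note that $\mathcal{T}$ carries each slab bijectively onto a rectangle, and read off the relations $(\mathcal{R}_0)$--$(\mathcal{R}_p)$ from the no-gap/no-overlap conditions on the images. Your closing argument via the elliptic M\"obius map $M:L\mapsto 1/(\lambda-L)$ of trace $\lambda=2\cos(\pi/q)$ is also sound; the computation $M^p(-1)=1$ is correct, and together with $L_1=M(-1)$ and $M^{-1}(1)=\lambda-1$ it does give $L_{p-1}=\lambda-1$ and $R=1$ (you skip the last two observations, but they are immediate).

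There is, however, one genuine error in the middle of the argument, and it is not cosmetic. You write that ``over the full cylinders the fibre is the constant interval $K_p=[0,R]$ and the slab-images merely stack up to cover $\Omega$ there, so every genuine constraint comes from the non-full cylinders abutting $\pm\lambda/2$.'' This is false for half of the full cylinders. The cylinders $\Delta_{-1,r}$ with $r\ge 2$ lie in $\bigl[-\tfrac{2}{3\lambda},0\bigr)\subset J_{p-1}=[-1/\lambda,0)$, and over $J_{p-1}$ the fibre is $K_{p-1}=[0,L_{p-1}]$, not $K_p$; since $L_{p-1}=\lambda-1\ne 1=R$, this is a different interval. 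The asymmetry is essential. Under $\mathcal{T}$ one has $h_{-1,r}(K_{p-1})=\bigl[\tfrac{1}{r\lambda},\tfrac{1}{r\lambda-L_{p-1}}\bigr]$ and $h_{+1,r-1}(K_p)=\bigl[\tfrac{1}{(r-1)\lambda+R},\tfrac{1}{(r-1)\lambda}\bigr]$, and demanding that these two images abut with no gap and no overlap is exactly the equation $r\lambda-L_{p-1}=(r-1)\lambda+R$, i.e.\ $R=\lambda-L_{p-1}$. So $(\mathcal{R}_0)$ is a constraint that comes \emph{from} the full cylinders, not a leftover that merely ``closes the cycle.'' As written, your plan dismisses the full cylinders, so it both misstates the fibre heights there and leaves $(\mathcal{R}_0)$ without a derivation; had the $\varepsilon=-1$ fibres really been $[0,R]$ the images $h_{-1,r}([0,R])$ and $h_{+1,r-1}([0,R])$ would overlap (since $\tfrac{1}{r\lambda-R}>\tfrac{1}{(r-1)\lambda+R}$ when $R=1$, $\lambda<2$), and the tiling would fail. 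Once this is corrected --- track the fibres $K_{p-1}$ and $K_p$ separately over the $\varepsilon=-1$ and $\varepsilon=+1$ full cylinders and record $(\mathcal{R}_0)$ as the resulting matching condition --- the rest of your outline (including the $r=1$ branch analysis near $\pm\lambda/2$, which you rightly flag as the remaining bookkeeping, and the minimality remark) can be carried out.
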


The map $f_q$ sends each interval $J_i$ to $J_{i+1}$ for
$i=1,\dots,p-1$. Further, we denote $\Omega_+ = \{(t,v) \in \Omega
\, |\, t>0\}$.

\begin{figure}[h!t]
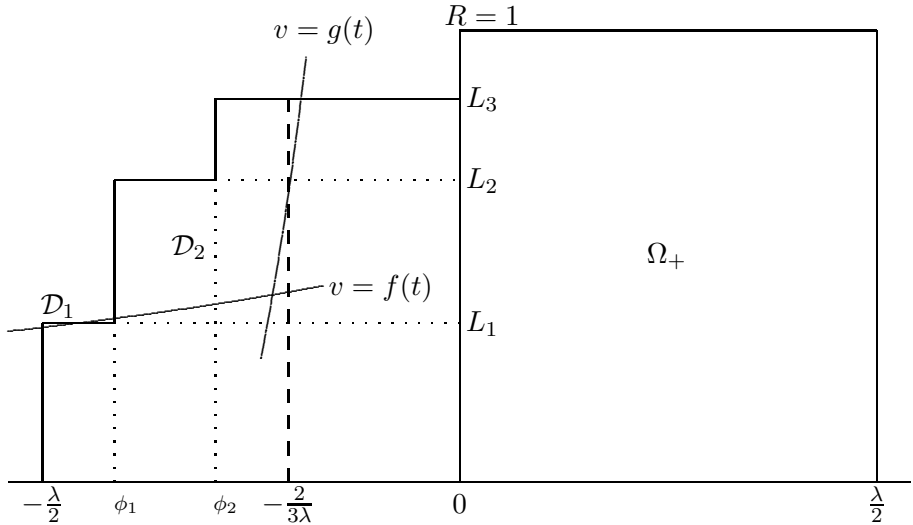

$$
\beginpicture
  \setcoordinatesystem units <0.6 cm, 0.6cm>
  \setplotarea x from -10 to 10, y from 0 to 10
  \putrule from -10 0 to 10 0
  \putrule from  0 0 to 0 10
  \putrule from  -9.24 0 to -9.24 3.51
  \putrule from  -9.24 3.51 to -7.65 3.51
  \putrule from  -7.65 3.51 to -7.65 6.68
  \putrule from  -7.65 6.68 to -5.41 6.68
  \putrule from  -5.41 6.68 to -5.41 8.48
  \putrule from  -5.41 8.48 to 0 8.48
  \putrule from  0 10 to 9.24 10
  \putrule from  9.24 0 to 9.24 10
  \put{$0$} at 0 -0.5
  \put{$-\frac{2}{3\lambda}$} at -3.8 -0.5
  \put{$\frac{\lambda}{2}$} at 9.24 -0.5
  \put{$-\frac{\lambda}{2}$} at -9.24 -0.5
  \put{\footnotesize $\phi_1$ \normalsize} at -7.3 -0.5
  \put{\footnotesize $\phi_2$ \normalsize} at -5.1 -0.5
  \put{$R=1$} at 0.5 10.35
  \put{$L_3$} at 0.5 8.48
  \put{$L_2$} at 0.5 6.68
  \put{$L_1$} at 0.5 3.51
   \put{$\Omega_+$ \large} at 4.7 5

\setquadratic \plot
 -10 3.333333333
-9.5    3.389830508
-9  3.448275862
-8.5    3.50877193
-8  3.571428571
-7.5    3.636363636
-7  3.703703704
-6.5    3.773584906
-6  3.846153846
-5.5    3.921568627
-5  4
-4.5    4.081632653
-4  4.166666667
-3.5    4.255319149
-3  4.347826087
/

\setquadratic \plot -4.4    2.727272727 -4.3    3.255813953 -4.2    3.80952381 -4.1    4.390243902
-4  5 -3.9    5.641025641 -3.8    6.315789474 -3.7    7.027027027 -3.6    7.777777778 -3.5
8.571428571 -3.4    9.411764706 /

\put{$v = f(t)$} at -1.75 4.3
\put{$v = g(t)$} at -3 10

\put{$\mathcal{D}_1$} at -8.9 3.9
\put{$\mathcal{D}_2$} at -6 5.2

  \setdots
  \putrule from  -7.65 0 to -7.65 3.51
  \putrule from  -5.41 0 to -5.41 6.68
  \putrule from  -7.65 3.51 to 0 3.51
  \putrule from  -5.41 6.68 to 0 6.68

   \setdashes
    \putrule from  -3.8 0 to -3.8 8.48

  \endpicture
$$
\caption[$\Omega$ for $q=8$]{\label{fig: Omegaq8} The region of
the natural extension $\Omega_8$, with $\mathcal{D}$ of Lemma
\ref{th: shape D}.}
\end{figure}

In~\cite{[BKS]} it is shown that $\mathcal{T}$ preserves a
probability measure,  $\nu$, that is absolutely continuous with
respect to Lebesgue measure. Its density is
\begin{equation}\label{def: measure even}
g_q(t,v)= \begin{cases}
\frac{\displaystyle C_q}{\displaystyle (1+tv)^2}, & \textrm{for } (t,v) \in \Omega_q,\\
0, & \textrm{otherwise},
\end{cases}
\end{equation}
where
$C_q=\displaystyle{\frac{1}{\log[(1+\cos\frac{\pi}{q})/\sin\frac{\pi}{q}]}}$
is a normalizing constant. It is also shown in \cite{[BKS]}, that
the dynamical system $(\Omega,\nu,\mathcal{T})$ is weak Bernoulli
(and therefore ergodic).

The following proposition on the distribution of the $\Theta_n$, also in \cite{[BKS]}, is
a consequence of the Ergodic Theorem and the strong approximation
properties of the Rosen fractions; see \cite{[DK]}, or
\cite{[IK]}, Chapter~4.

\begin{Proposition}
\label{prop: ergodic T even} Let $q \geq 3$ be even. For almost all
$G_q$-irrational numbers $x$ the two-dimensional sequence
\[\mathcal{T}^n(x,0) = \left( t_n,v_n \right), n \geq 1\]
is distributed over $\Omega_q$ according to the density function
$g_q(t,v)$ given in Equation ~{\rm (\ref{def: measure even})}.
\end{Proposition}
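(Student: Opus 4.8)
The plan is to derive this as a routine consequence of the ergodicity of the natural extension system $(\Omega_q, \nu, \mathcal{T})$, which is cited from \cite{[BKS]}, combined with the identity $\mathcal{T}^n(x,0) = (t_n, v_n)$ already recorded in the excerpt. The key conceptual point is that $(t_n, v_n)_{n \ge 1}$ is not literally an orbit of a $\nu$-typical point — it is the orbit of the special point $(x,0)$, which lies on the boundary $\{v = 0\}$, a set of $\nu$-measure zero. So the first step is to recall the standard device for handling this: one shows that for almost every $G_q$-irrational $x$, the orbit $(t_n, v_n)$ is equidistributed with respect to $g_q$. This is exactly the "strong approximation" input referred to — the fact that $v_n \to$ the relevant limiting behaviour fast enough, equivalently that the difference between the orbit of $(x,0)$ and the orbit of $(x, y)$ for a fixed generic $y$ becomes negligible. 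Concretely, because $v_n$ is a continued fraction in the digits $r_n, r_{n-1}, \dots, r_1$ read backwards, changing the initial seed $0$ to any $y \in K_{j}$ perturbs $v_n$ by an amount that contracts geometrically in $n$ (this is the standard bounded-distortion / contraction estimate for continued-fraction-type maps, and is precisely what makes $\mathcal{T}$ the natural extension); hence the empirical distribution of $(t_n,v_n)$ along the $(x,0)$-orbit coincides with that along a $\nu$-generic orbit.

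The second step is then to invoke the Ergodic Theorem directly. Since $(\Omega_q, \nu, \mathcal{T})$ is ergodic (indeed weak Bernoulli, as stated just before Proposition~\ref{prop: ergodic T even}), Birkhoff's theorem gives, for $\nu$-a.e.\ starting point $(t,v)$ and every bounded continuous (or, more than sufficiently, every indicator of a nice Borel set $B \subset \Omega_q$),
$$
\lim_{N \to \infty} \frac{1}{N} \sum_{n=1}^{N} \mathbf{1}_B\bigl(\mathcal{T}^n(t,v)\bigr) \;=\; \nu(B) \;=\; \iint_B g_q(t,v)\, dt\, dv.
$$
Combining with Step~1 transfers this identity to the orbit of $(x,0)$ for almost every $x$. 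Taking $B$ to range over a countable generating family of sets (e.g.\ products of rational-endpoint intervals intersected with $\Omega_q$), and using that a single $\nu$-null exceptional set suffices for the whole countable family, one concludes that for a.e.\ $x$ the sequence $(t_n, v_n)$ is equidistributed with respect to $g_q(t,v)$ on $\Omega_q$. This is the assertion of the proposition.

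The main obstacle — really the only non-formal point — is Step~1: justifying that one may replace the special seed $(x,0)$ by a $\nu$-generic seed without affecting the limiting distribution. The clean way is to fix a generic $y$, note $(x,y) \in \Omega_q$ determines the same future digits as $(x,0)$, so that $t_n$ is identical for the two orbits while $v_n$ and $v_n'$ (the two "past" coordinates) satisfy $|v_n - v_n'| \le C \rho^n$ for constants $C, \rho < 1$ depending only on $q$; this follows from the explicit form $v_n = [1:r_n, \varepsilon_n : r_{n+1}, \dots]$ together with the uniform expansion of $f_q$ away from $0$. Then for any set $B$ whose boundary has $\nu$-measure zero — which we may assume for our generating family — the two empirical averages differ by $o(1)$, and the result follows. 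All of this is standard in the Rosen-fraction literature (cf.\ the references to \cite{[DK]} and \cite{[IK]}, Chapter~4, cited in the statement), so the proof amounts to assembling these ingredients.
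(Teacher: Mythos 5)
Your proposal is correct and follows the same route the paper indicates: the paper gives no detailed proof, stating only that the result follows from the Ergodic Theorem together with the strong approximation properties of Rosen fractions (citing \cite{[BKS]}, \cite{[DK]}, and \cite{[IK]}). Your Step~1 (geometric contraction $|v_n - v_n'| \to 0$, allowing transfer from $\nu$-generic seeds to the Lebesgue-null fiber $\{v=0\}$ via a Fubini-type argument) is precisely the ``strong approximation'' input, and Step~2 is the Birkhoff theorem, so the two ingredients match.
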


\subsection{Consecutive pairs of large approximation constants: The region $\mathcal{D}$}
To find Tong's spectrum we start by looking at two consecutive
large approximation coefficients $\Theta_{n-1}$ and $\Theta_n$. In
view of~(\ref{eq: formulae for theta}) we define
$\mathcal{D}\subset \Omega$ by
\begin{equation}
\label{eq: def D}
\mathcal{D}=\left\{(t,v) \in \Omega \,|\, \min\left\{ \frac{v}{1+tv},
\frac{|t|}{1+tv}\right\}>\frac12\right\}.
\end{equation}
So $(t_n,v_n)\in \mathcal{D}$ if and only if
$\min\{\Theta_{n-1},\Theta_n\}>\frac12$. We have the following
result describing $\mathcal{D}$.
\begin{Lemma}
\label{th: shape D}Define functions $f$ and $g$ by
\begin{equation} \label{eq: f and g}
f(x)=\frac{1}{2-x} \quad\textrm{and} \quad g(x)=\frac{2|x|-1}{x}.
\end{equation}
For all even $q,$ $\mathcal{D}$ consists of two connected
components $\mathcal{D}_1$ and $\mathcal{D}_2$.  The subregion
$\mathcal{D}_1$ is bounded by the lines $t = -\frac{\lambda}{2}, v
= L_1$ and the graph of $f$;  $\mathcal{D}_2$ is bounded by the
graph of $g$ from the right, by the graph of $f$ from below and by
the boundary of $\Omega$; see Figure~\ref{fig: Omegaq8}.
\end{Lemma}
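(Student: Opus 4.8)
The plan is to translate the defining inequality of $\mathcal{D}$ into explicit curves in the $(t,v)$-plane and then locate these curves relative to $\Omega$. First I would treat the two halves of the $\min$ separately. On $\Omega_+$ we have $t>0$, so $1+tv>0$ throughout $\Omega$, and the condition $\tfrac{v}{1+tv}>\tfrac12$ rearranges (clearing the positive denominator) to $v(2-2tv)>1+... $; more precisely $2v>1+tv$, i.e. $v(2-t)>1$. Since on $\Omega_+$ one has $0\le t<\lambda/2<1$ (as $\lambda=2\cos(\pi/q)<2$, and in fact $\lambda/2<1$), the factor $2-t$ is positive, so this is equivalent to $v>\tfrac{1}{2-t}=f(t)$. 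Similarly $\tfrac{|t|}{1+tv}>\tfrac12$ becomes $2|t|>1+tv$. On $\Omega_+$, where $t=|t|>0$, this reads $t(2-v)>1$; but here one must check the sign of $2-v$: since $v\le R$ and, by the relations $(\mathcal{R}_0)$–$(\mathcal{R}_p)$, $R<2$ (indeed $R=1$ in the pictured case $q=8$, and in general $R=1/(\lambda-L_{p-1})$ with $\lambda-L_{p-1}>1/2$ forcing $R<2$), we get $2-v>0$, so the condition is $t>\tfrac{1}{2-v}$, equivalently — inverting the increasing map $v\mapsto 1/(2-v)$ — $v<2-\tfrac1t=\tfrac{2t-1}{t}=g(t)$. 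So on $\Omega_+$, $\mathcal{D}\cap\Omega_+=\{(t,v)\in\Omega:\ f(t)<v<g(t)\}$, which is exactly the region $\mathcal{D}_2$ bounded below by $f$, on the right by $g$, and otherwise by $\partial\Omega$.

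Next I would handle the part of $\mathcal{D}$ with $t<0$. Here $\varepsilon_{n+1}=-1$ so the second approximation coefficient is $\tfrac{-t}{1+tv}=\tfrac{|t|}{1+tv}$; again $1+tv>0$ since $|t|<\lambda/2<1$ and $v<2$ give $|tv|<1$. The inequality $\tfrac{v}{1+tv}>\tfrac12$ becomes, as before, $v(2-t)>1$; but now $t<0$ so $2-t>2$, and this is a mild constraint — it is $v>\tfrac{1}{2-t}=f(t)$, where $f(t)<\tfrac12$. The inequality $\tfrac{|t|}{1+tv}>\tfrac12$ becomes $2|t|=-2t>1+tv$, i.e. $-2t-tv>1$, i.e. $-t(2+v)>1$, i.e. $t<\tfrac{-1}{2+v}$; since $v\ge0$ this forces $t<-\tfrac12$. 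Combined with $t\ge-\lambda/2$, this requires $\lambda/2>1/2$, i.e. $\lambda>1$, which holds for all $q\ge4$ (it fails only for $q=3$, consistent with the hypothesis that $q=2p$ is even with $p\ge2$). Translating $t<-1/(2+v)$ back: it says $v<-2-1/t=-(2+1/t)$; one checks this is the relevant left-component condition, and intersecting with $v\le L_1$, $v>f(t)$, and $t\ge-\lambda/2$ produces the region $\mathcal{D}_1$ bounded by $t=-\lambda/2$, $v=L_1$, and the graph of $f$. I would verify that on $J_1=[-\lambda/2,\phi_1)$ we indeed have $K_1=[0,L_1]$, so that $v=L_1$ is the correct top boundary there, and that the curve $g$ does not enter this strip (so $\mathcal{D}_1$ and $\mathcal{D}_2$ are genuinely disjoint, separated by the region where $t$ is near $0$).

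The main work — and the step I expect to be the real obstacle — is not the algebra above but verifying the precise boundary description: that $\mathcal{D}_1$ is cut off on the right exactly by the graph of $f$ (not by some piece of $\partial\Omega$ or by $g$), and that $\mathcal{D}_2$ meets the left boundary component of $\Omega$, the curve $v=L_j$ on each $J_j$, in the claimed way, with $f$ as its lower edge and $g$ as its right edge. This requires knowing where $f$ and $g$ sit relative to the staircase boundary $\bigcup_j J_j\times K_j$: one must check inequalities like $f(\phi_j)\le L_j$ and $f(0)=\tfrac12<R$, and that $g$ crosses from inside $\Omega$ to outside within $J_p=[0,\lambda/2)$, using the defining relations $(\mathcal{R}_0)$–$(\mathcal{R}_p)$ for the $L_j$ and $R$. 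I would also confirm $f$ and $g$ intersect $\partial\Omega$ (or each other) only at the corner points claimed, so that $\mathcal{D}_1,\mathcal{D}_2$ are each a single connected component. Once these geometric containments are established from the recursion, the lemma follows; the connectedness of each piece is then immediate since $f$ is increasing and $g$ is increasing on $t>1/2$, and the staircase boundary is monotone.
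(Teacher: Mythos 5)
Your analysis of the $\Omega_+$ case contains the fatal error. You claim that $\mathcal{D}\cap\Omega_+=\{(t,v):f(t)<v<g(t)\}$ is nonempty and equals $\mathcal{D}_2$. But for $0<t<1$ one has $f(t)>g(t)$ (they meet only at $t=1$, and $f(1/2)=2/3$ while $g(1/2)=0$), and since $\lambda/2<1$ the interval $(0,\lambda/2)$ lies entirely where $f>g$; hence $\{f(t)<v<g(t)\}\cap\Omega_+$ is \emph{empty}. This is in fact the paper's central observation: since the graphs of $f$ and $g$ meet at $t=1>\lambda/2$, \emph{all} of $\mathcal{D}$ sits in the half-plane $t<0$. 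You can see this in Figure~\ref{fig: Omegaq8}, where both $\mathcal{D}_1$ and $\mathcal{D}_2$ are drawn to the left of $t=0$. Your $\mathcal{D}_2$ is therefore placed on the wrong side of the picture.

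You also flip an inequality in the $t<0$ case. From $-t(2+v)>1$ you correctly get $t<-1/(2+v)$, but solving for $v$ (dividing the inequality $tv<-1-2t$ by the negative number $t$) gives $v>-2-1/t=g(t)$, not $v<-2-1/t$ as you wrote. Had you carried $v<g(t)$ forward you would have found $\mathcal{D}_1$ empty (since $g(-\lambda/2)=-2+2/\lambda<0\le v$ for $\lambda>1$), which should have been a warning sign. With the corrected $v>g(t)$, the description in the statement makes sense: on $t<0$, $\mathcal{D}$ is the set $\{v>f(t)\text{ and }v>g(t)\}$ inside $\Omega$, with $f$ giving the lower boundary and $g$ the right boundary. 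The paper then pins down the relevant corner points $(-\lambda/2,2/(\lambda+4))$, $(-L_1,L_{p-1})$, $(-L_{p-1},L_1)$ and explains the splitting into two components by the fact that $\phi_0\le -L_{p-1}=1-\lambda\le\phi_1$, so that the staircase boundary of $\Omega$ dips below the curves on the strip between the two pieces. Your proposal does not address this splitting mechanism, which is the real content of the lemma once the sign analysis is sorted out.
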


\begin{proof}
For the approximation coefficients one has
\[
\begin{aligned}
\frac{v}{1+tv} \leq \frac12 &\Leftrightarrow& \quad v \leq f(t),\\
\frac{|t|}{1+tv} \leq \frac12 &\Leftrightarrow&
\begin{cases}
v \leq g(t) & \quad \textrm{if } t < 0\\
\\
v \geq g(t) &
\quad \textrm{if } t \geq 0.
\end{cases}
\end{aligned}
\]

Since for $t>0$ the graphs of $f$ and $g$ meet at $t=1$, and $1
> \frac{\lambda}{2}$, it follows that points for which
$\min\left\{\frac{v}{1+tv} ,\frac{|t|}{1+tv} \right\}<\frac12$
must satisfy $t<0$ and $v > g(t)$. It is easy to check that for
all even $q$ the only intersection points of the graphs of the
functions $f$ and $g$ in the region of the natural extension are
given by $\left(-\frac{\lambda}{2},\frac{2}{\lambda+4}\right)$,
$(-L_1,L_{p-1})$ and $(-L_{p-1},L_1)$. The fact that $\mathcal{D}$
consists of the two pieces follows from $\phi_0 \leq -L_{p-1} =
1-\lambda \leq \phi_1$.
\end{proof}

Having control on the approximation coefficients $\Theta_{n-1}$
and $\Theta_{n}$, we turn our attention to $\Theta_{n+1}$ on
$\mathcal{D}$. It follows from~(\ref{eq: formulae for theta}) and
the definition of $\mathcal{T}$ that

\begin{equation}
\label{eq: formula Theta_{n+1}}
\Theta_{n+1}=\frac{\varepsilon_{n+2}(1-\varepsilon_{n+1}r_{n+1}t_n\lambda)(\lambda r_{n+1}
+\varepsilon_{n+1}v_n)}{1+t_nv_n}.
\end{equation}

In order to express $\Theta_{n+1}$  locally as a function of only
$t_n$ and $v_n$,  we divide $\mathcal{D}$ into regions where
$r_{n+1},\varepsilon_{n+1}$ and $\varepsilon_{n+2}$ are constant.
This gives three regions; see Table~\ref{ConstantsEven} for the
definition of $\mathcal A$, $\mathcal B$, and $\mathcal C$, the
new subregions involved in this.   See also Figure~\ref{fig:Dq8}.

\begin{table}
\begin{displaymath}
\begin{array}{lrl|c|c|c|c}
\multicolumn{3}{c|}{\textrm{Region}} & \kern2pt r_{n+1} &\kern2pt  \varepsilon_{n+1}
&\kern2pt  \varepsilon_{n+2} &\kern2pt  \Theta_{n+1}\\
\hline
&&&&&\\
\mathcal A:& \displaystyle{\frac{-2}{3\lambda}} \leq t_n \leq &\!\!\!\! \frac{-1}{\lambda +1} &
2 & -1 & -1 &
\displaystyle{\frac{(1+2t_n\lambda)(v_n-2\lambda)}{1+t_nv_n}}\\
&&&&&\\
\mathcal B:& \displaystyle{\frac{-1}{\lambda}} \leq t_n < &\!\!\!\!\displaystyle{\frac{-2}{3\lambda}} & 1 & -1 & 1 &
\displaystyle{\frac{(1+t_n\lambda)(\lambda-v_n)}{1+t_nv_n}}\\
&&&&&\\
\mathcal C \cup \mathcal D_{1}:& \frac{-\lambda}{2} \leq t_n <&\!\!\!\! \displaystyle{\frac{-1}{\lambda}} & 1 & -1 & -1 &
\displaystyle{\frac{(1+t_n\lambda)(v_n-\lambda)}{1+t_nv_n}}\\
\end{array}
\end{displaymath}
\caption{Subregions of $\mathcal D$ giving constant coefficients.}
\label{ConstantsEven}
\end{table}

\begin{figure}
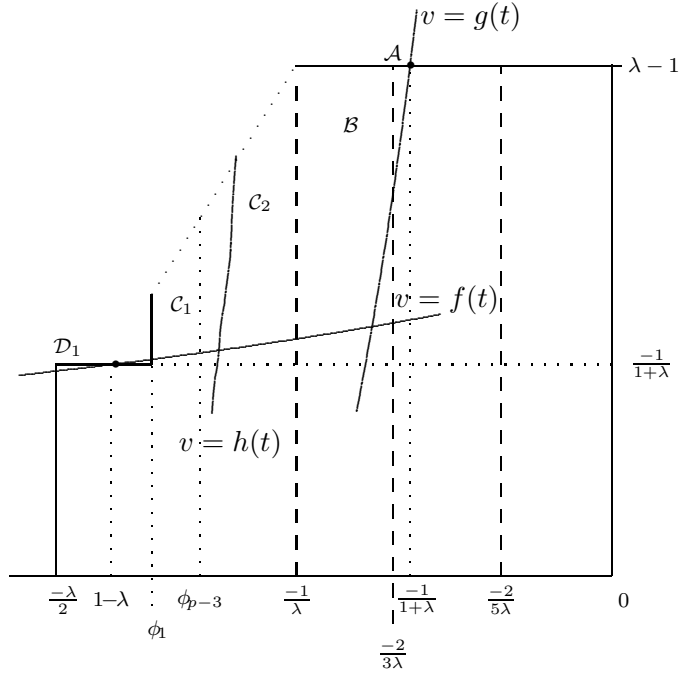

$$
\beginpicture
  \setcoordinatesystem units <0.8 cm, 0.8 cm>
  \setplotarea x from -10 to 0, y from 0 to 8.5
  \putrule from 0 0 to 0 8.5
  \putrule from -10  0 to 0 0
  \putrule from  -9.24 0 to -9.24 3.51
  \putrule from  -9.24 3.51 to -7.65 3.51
  \putrule from  -7.65 3.51 to -7.65 4.68
  \putrule from  -7.65 4.68 to -5.41 5.68
  \putrule from  -5.41 8.48 to 0 8.48

  \setquadratic \plot
-10 3.333333333 -9.5 3.389830508 -9
 3.448275862 -8.5    3.50877193 -8  3.571428571 -7.5
3.636363636 -7  3.703703704 -6.5    3.773584906 -6  3.846153846 -5.5    3.921568627 -5  4 -4.5
4.081632653 -4  4.166666667 -3.5    4.255319149 -3  4.347826087 /

\setquadratic \plot -4.4    2.727272727 -4.3    3.255813953 -4.2    3.80952381 -4.1    4.390243902
-4  5 -3.9    5.641025641 -3.8    6.315789474 -3.7    7.027027027 -3.6    7.777777778 -3.5
8.571428571 -3.4    9.411764706 /

\setquadratic \plot -6.8   2.7  -6.78  3   -6.75  3.3  -6.7   3.6 -6.675   3.9 -6.65   4.2 -6.6   4.5 -6.55  4.9 -6.5   5.3   -6.45   6.3   -6.4     7   /
\put{\footnotesize  $\mathcal D_{1}$} at -9.2 3.8
\put{\footnotesize  $\mathcal C_{1}$} at -7.3 4.5
\put{\footnotesize  $\mathcal C_{2}$} at -6 6.2
\put{\footnotesize $\mathcal B$} at  -4.5 7.5
\put{\footnotesize $\mathcal A$} at -3.8 8.7
\put{$v = f(t)$} at -2.9  4.6
\put{$v = g(t)$} at -2.5 9.3
\put{$v = h(t)$} at -6.5  2.2
\setdots
\setlinear \plot -7.8 4.68 -5.41 8.48 /
\put{\footnotesize$\frac{-\lambda}{2}$} at -9.28 -0.4

\putrule from -8.48 0 to  -8.48 3.51 \put{\footnotesize $1\!\!-\!\!\lambda$} at -8.48 -0.4

\putrule from  -7.8 -0.5 to -7.8 3.51 \put{\footnotesize $\phi_{\!1}$} at -7.7 -0.9

\putrule from  -7 0 to -7 6 \put{\footnotesize $\phi_{\!p-3}$} at -7 -0.4

\putrule from -3.511 0  to -3.511 8.48 \put{\footnotesize $\frac{-1}{1+\lambda}$} at -3.4 -0.4

\put{\tiny $\bullet$} at -3.5 8.48 \put{\tiny $\bullet$} at -8.4 3.511

\putrule from  -7.65 3.51 to 0 3.51 \put{\footnotesize $\frac{-1}{1+\lambda}$} at 0.5 3.51

\setdashes

\putrule from  -5.4 0 to -5.4 8.3 \put{\footnotesize $\frac{-1}{\lambda}$} at -5.4 -0.4

\putrule from -3.8 -0.8 to -3.8 8.48 \put{\footnotesize$\frac{-2}{3\lambda}$} at -3.8 -1.3

\putrule from -2 0 to -2 8.48 \put{\footnotesize$\frac{-2}{5\lambda}$} at -2 -0.4

\put {\footnotesize$0$ \normalsize} at 0.1 -0.4 \put {\footnotesize$\lambda-1$ \normalsize} at 0.6
8.48
\endpicture
$$
\caption[Area D q=even]{\label{fig:Dq8} The regions in $\mathcal{D}$.}
\end{figure}

Solving for $\Theta_{n+1}= 1/2$, leads to
$$
h(t) = \frac{2\lambda^2t+2\lambda+1}{2\lambda t - t +2}\,,
$$
whose graph divides  $\mathcal C$ into two parts. Let $\mathcal
C_{1}$ denote the left-hand side of this graph, there and on
$\mathcal D_{1}$ one has $\Theta_{n+1}>\frac12$; on the remainder,
$\mathcal C_{2}$,  one has  $\Theta_{n+1}<\frac12$.  Note that
$\mathcal T$ takes the graph of $h$ to the graph of $g$.

On its right-hand side region ${\mathcal B}$ is bounded by the
graph of $g$. In view of Equation ~(\ref  {eq: formula
Theta_{n+1}}) and Table~\ref {ConstantsEven}, we consider the
graph of
$$
\ell (t)=\frac{2\lambda^2t+2\lambda -1}{2\lambda t+t+2},\quad
\text{for $t\neq \frac{-2}{2\lambda +1}$.}
$$
An easy calculation shows that the graphs of $\ell$ and $g$
intersect only at the point $(-1/(\lambda +1),\lambda -1)$, and
that for $t>-2/(2\lambda +1)$ the graph of $g$ lies above that of
$\ell$. Furthermore, $\ell^{\prime}(t)>0$ for $t \neq
\frac{-2}{2\lambda +1}$, and $\ell \left(
\frac{-1}{\lambda}\right) =\lambda\,$; we conclude that
$\Theta_{n+1}<1/2$ on region ${\mathcal B}$.

\begin{Lemma}\label{lem: theta's on region (III)}
With notation as above, the subset of $\mathcal D$ on which
$\Theta_{n+1}> 1/2$ is exactly   the union of regions $\mathcal
D_{1}$, $\mathcal C_{1}$ and region $\mathcal A$. On region
$\mathcal A$ one has  $\Theta_{n+1}>\Theta_{n-1}>\Theta_{n}$.
\end{Lemma}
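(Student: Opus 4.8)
The plan is to first reduce the two claims to one. The discussion preceding the lemma already shows $\Theta_{n+1}>\tfrac12$ on $\mathcal D_1\cup\mathcal C_1$ and $\Theta_{n+1}<\tfrac12$ on $\mathcal C_2\cup\mathcal B$; since $\mathcal D=\mathcal D_1\cup\mathcal C_1\cup\mathcal C_2\cup\mathcal B\cup\mathcal A$ (Table~\ref{ConstantsEven} together with the splitting of $\mathcal C$ by the graph of $h$), the first claim is equivalent to $\Theta_{n+1}>\tfrac12$ on all of $\mathcal A$. But on $\mathcal A\subset\mathcal D$ one has $\Theta_{n-1}>\tfrac12$ by the very definition of $\mathcal D$, so $\Theta_{n+1}>\Theta_{n-1}>\tfrac12$ once the second claim is in hand; hence it suffices to prove $\Theta_{n+1}>\Theta_{n-1}>\Theta_{n}$ on $\mathcal A$. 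On $\mathcal A$ one has $\varepsilon_{n+1}=-1$ and $t<0$, so by (\ref{eq: formulae for theta}) and Table~\ref{ConstantsEven}
\[
\Theta_{n-1}=\frac{v}{1+tv},\qquad \Theta_{n}=\frac{-t}{1+tv},\qquad \Theta_{n+1}=\frac{(1+2t\lambda)(v-2\lambda)}{1+tv};
\]
since $1+tv>0$ on $\Omega$, the desired chain is equivalent to the two polynomial inequalities $(1+2t\lambda)(v-2\lambda)>v$ and $v>-t$.

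For the first, expanding and cancelling the factor $2\lambda>0$ turns $(1+2t\lambda)(v-2\lambda)>v$ into $t(v-2\lambda)>1$, that is $|t|\,(2\lambda-v)>1$. On $\mathcal A$ the Table gives $|t|\ge L_1=\tfrac{1}{\lambda+1}$, and $\mathcal A$ lies in the block $J_{p-1}\times K_{p-1}$ of $\Omega$, so $v\le L_{p-1}=\lambda-1$ (recall that $({\mathcal R}_0)$ and $({\mathcal R}_p)$ force $R=1$, whence $L_{p-1}=\lambda-1$). Hence
\[
|t|\,(2\lambda-v)\ \ge\ \frac{1}{\lambda+1}\bigl(2\lambda-(\lambda-1)\bigr)\ =\ 1,
\]
with equality only if $t=-L_1$ and $v=L_{p-1}$ simultaneously. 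At the point $(-L_1,L_{p-1})$, however, a direct computation (using $L_1L_{p-1}=\tfrac{\lambda-1}{\lambda+1}$) gives $\Theta_n=L_1/(1-L_1L_{p-1})=\tfrac12$, so this point lies on $\partial\mathcal D$ and not in $\mathcal A$. Therefore the inequality is strict on all of $\mathcal A$, and $\Theta_{n+1}>\Theta_{n-1}$ there.

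The step I expect to be delicate is the second inequality, $v>-t=|t|$, near the boundary of $\mathcal A$. Here $\mathcal A\subset\mathcal D$ forces $\Theta_{n}>\tfrac12$, which for $t<0$ reads exactly $v>g(t)=\tfrac{1}{|t|}-2$, and the Table gives $|t|\le\tfrac{2}{3\lambda}$ on $\mathcal A$. Since $\tfrac{1}{|t|}-2\ge|t|$ is equivalent to $|t|^{2}+2|t|-1\le 0$, i.e.\ to $|t|\le\sqrt2-1$, it is enough that $\tfrac{2}{3\lambda}\le\sqrt2-1$; this holds precisely when $\lambda\ge\tfrac{2}{3}(\sqrt2+1)$, hence for every even $q\ge6$, and then $v>g(t)\ge|t|$ as needed. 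The case $q=4$, where $\mathcal C$ and $\mathcal D_1$ degenerate and $\tfrac{2}{3\lambda}>\lambda-1$, has to be checked directly; this, together with confirming that $(-L_1,L_{p-1})$ is the only corner of $\mathcal A$ lying on $\partial\mathcal D$, is the only real obstacle to a uniform treatment. With these in place, the two steps give $\Theta_{n+1}>\Theta_{n-1}>\Theta_n$ on $\mathcal A$, and hence, via the reduction above, also the first claim of the lemma.
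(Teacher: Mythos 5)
Your proof reaches the paper's conclusion but differs in two substantive ways, both of which are improvements. For $\Theta_{n+1}>\Theta_{n-1}$, the paper expands $(1+2t\lambda)(v-2\lambda)>v$ to $-4\lambda^2 t+2\lambda t v-2\lambda>0$ and then writes ``We use $v_n>-t_n$ again, so it is enough to show $-4\lambda^2 t_n-2\lambda t_n^2-2\lambda\geq0$.'' But since $2\lambda t<0$, replacing $v$ by the \emph{lower} bound $-t$ makes the expression \emph{larger}, so this substitution runs in the wrong direction; showing the substituted quantity is nonnegative does not give the wanted lower bound. Your reduction to $|t|(2\lambda-v)>1$, using the correct bounds $|t|\geq L_1$ and $v\leq L_{p-1}$ (with the identity $L_1(2\lambda-L_{p-1})=1$ and the observation that $(-L_1,L_{p-1})\in\partial\mathcal D$), is the right argument. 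Secondly, for $\Theta_{n-1}>\Theta_n$ the paper simply asserts $v>-t$ on $\mathcal A$ as a ``fact''; you actually derive it from $v>g(t)=\tfrac{1}{|t|}-2\geq|t|$, which is valid on $\mathcal A$ precisely when $\tfrac{2}{3\lambda}\leq\sqrt2-1$, i.e.\ $q\geq6$.

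Your flag on $q=4$ is not a false alarm: there $p=2$, $L_1=L_{p-1}=\lambda-1=\sqrt2-1$ and $\tfrac{2}{3\lambda}=\tfrac{\sqrt2}{3}>\sqrt2-1$, so on the (nonempty) region $\mathcal A$ one has $v\leq\lambda-1<\tfrac{2}{3\lambda}\leq-t$, hence $\Theta_{n-1}<\Theta_n$ and the claimed ordering $\Theta_{n+1}>\Theta_{n-1}>\Theta_n$ fails. The paper does not notice this, and the downstream step in the proof of Theorem~\ref{th: tong even} (``$\min\{\Theta_{n-1},\Theta_n,\Theta_{n+1}\}=\Theta_n$ on $\mathcal A$'') therefore also needs a separate treatment for $q=4$. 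Note that the half of the lemma you did establish, $\Theta_{n+1}>\Theta_{n-1}$, does survive for $q=4$ by exactly your argument. So the proposal is correct where it claims to be, identifies a genuine gap in the paper's proof of the $\Theta_{n+1}>\Theta_{n-1}$ step, and correctly isolates the $q=4$ anomaly in the statement itself.
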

\begin{proof}
The remarks directly above show that we need now only consider region $\mathcal A$.

It immediately follows from~(\ref{eq: formulae for theta}) and the
fact that $v_n > -t_n$ on $\mathcal A$, that
$\Theta_{n-1}>\Theta_n$. To show that $\Theta_{n+1}>\Theta_{n-1}$
we need to show
\[(1+2t_n\lambda)(v_n-2\lambda)> v_n.\]
or equivalently
\[-4\lambda^2 t_n + 2\lambda t_nv_n -2\lambda>0.\]
We use $v_n > -t_n$ again, so it is enough to show
\[-4\lambda^2 t_n - 2\lambda t_n^2 -2\lambda\geq0.\]
The last statement is true if $t_n \in [-\lambda - \sqrt{\lambda^2-1}, -\lambda
+ \sqrt{\lambda^2-1}]$, which does indeed hold on region $\mathcal A$.

Since $\min\{\Theta_{n-1},\Theta_n\}>\frac12$ on $\mathcal{D}$,
the result follows.
\end{proof}

Now, by definition, $f_{q}(t) = -1/t- \lambda$   for $t \in [-\lambda/2, -2/3 \lambda)$. It follows that the $\mathcal T$ orbit
of any point  of the $\mathcal C_{i}$ either eventually leaves
$\mathcal D$, or eventually enters $\mathcal A$.   Thus,  we
naturally focus on the interval $ t \in J_{p-1} = [\phi_{p-2},0)$.
For almost every $G_q$-irrational $x\in
\left[-\frac{\lambda}{2},\frac{\lambda}{2}\right)$ there is an $n$
such that $\mathcal{T}^n(x,0)=(t_n,v_n)$ and $t_n \in J_{p-2}$.
Divide the interval $J_{p-2}$ into three parts. If $t_n \in
\left[\phi_{p-2},\frac{-2}{3\lambda}\right)$, then
$\min\{\Theta_{n-1},\Theta_{n},\Theta_{n+1}\}<\frac12$. If $t_n
\in \left[\frac{-1}{\lambda+1},0\right)$, then $(t_n,v_n) \notin
\mathcal{D}$ so $\min\{\Theta_{n-1},\Theta_{n}\}<\frac12$.
However, if $t_n \in
\left[\frac{-2}{3\lambda},\frac{-1}{\lambda+1}\right)$, then it
may be that $(t_n,v_n) \in \mathcal{A}$  and thus
$\min\{\Theta_{n-1},\Theta_{n},\Theta_{n+1}\}>\frac12$.

\begin{Lemma} The transformation $\mathcal{T}$ maps
$\mathcal{A}$  bijectively  onto region $\mathcal{D}_{1}$.
\end{Lemma}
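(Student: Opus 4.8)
The plan is to compute $\mathcal{T}$ explicitly on $\mathcal{A}$, notice that there it splits as a product of two increasing one-variable M\"obius maps, and then read off the image ``fibrewise''. By Table~\ref{ConstantsEven}, on $\mathcal{A}$ one has $r_{n+1}=2$ and $\varepsilon_{n+1}=-1$, so $f_q(t)=-1/t-2\lambda$ there, and Definition~\ref{natural extension even} gives
$$
\mathcal{T}(t,v)=\bigl(\psi(t),\chi(v)\bigr),\qquad \psi(t)=-\frac1t-2\lambda,\quad \chi(v)=\frac{1}{2\lambda-v}.
$$
On the relevant ranges $\psi'(t)=1/t^{2}>0$ and $\chi'(v)=1/(2\lambda-v)^{2}>0$ (with $2\lambda-v>0$, since $v\le\lambda-1$ on $\mathcal{A}$), so both factors are strictly increasing and $\mathcal{T}|_{\mathcal{A}}$ is a continuous injection.

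Next I would describe $\mathcal{A}$ and $\mathcal{D}_1$ as ``vertically fibred'' regions. From the relations $(\mathcal{R}_0)$--$(\mathcal{R}_p)$ of Theorem~\ref{def: Omega even} one gets $L_1=1/(\lambda+1)$, $L_{p-1}=\lambda-1$, $R=1$. Combining the $t$-range of Table~\ref{ConstantsEven} with Lemma~\ref{th: shape D} (on $[-2/(3\lambda),-L_1]$ one has $g\ge f$, so the constraint cutting out $\mathcal{D}$ is $v>g(t)$) together with the fact that this interval lies in $J_{p-1}$, where the ceiling of $\Omega$ is $L_{p-1}$, yields
$$
\mathcal{A}=\bigl\{(t,v): -\tfrac{2}{3\lambda}\le t\le -L_1,\ \ g(t)<v\le \lambda-1\bigr\},
$$
while Lemma~\ref{th: shape D} gives $\mathcal{D}_1=\bigl\{(t',v'): -\tfrac{\lambda}{2}\le t'\le -L_{p-1},\ f(t')<v'\le L_1\bigr\}$.

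I would then apply $\mathcal{T}$ to these descriptions. Since $\psi$ is increasing with $\psi(-2/(3\lambda))=-\lambda/2$ and $\psi(-L_1)=1-\lambda=-L_{p-1}$, the $t$-interval of $\mathcal{A}$ is carried onto that of $\mathcal{D}_1$; and for fixed $t$ the fibre $\bigl(g(t),\lambda-1\bigr]$ is carried by the increasing $\chi$ onto $\bigl(\chi(g(t)),\chi(\lambda-1)\bigr]$. It then remains only to verify the two identities
$$
\chi(\lambda-1)=\frac{1}{\lambda+1}=L_1,\qquad \chi\bigl(g(t)\bigr)=f\bigl(\psi(t)\bigr),
$$
the second being one line from $g(t)=-2-1/t$ for $t<0$ (it is the counterpart of the remark above that $\mathcal{T}$ sends the graph of $h$ to the graph of $g$). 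These show the fibre of $\mathcal{A}$ over $t$ is sent onto the fibre $\bigl(f(t'),L_1\bigr]$ of $\mathcal{D}_1$ over $t'=\psi(t)$; hence $\mathcal{T}(\mathcal{A})=\mathcal{D}_1$, and with injectivity the bijectivity follows.

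The main obstacle I anticipate is not the algebra, which is short, but getting the boundary of $\mathcal{A}$ exactly right: that on $[-2/(3\lambda),-L_1]$ it is the graph of $g$ (not of $f$) bounding $\mathcal{A}$ from below, and that the ceiling of $\Omega$ over this interval equals $L_{p-1}=\lambda-1$, so that $\psi$ and $\chi$ can be matched --- via $(\mathcal{R}_0)$--$(\mathcal{R}_p)$ --- against the shape of $\mathcal{D}_1$ given by Lemma~\ref{th: shape D}.
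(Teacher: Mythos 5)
Your proof is correct and rests on the same facts the paper's proof uses: on $\mathcal{A}$ the map $\mathcal{T}$ splits as the product $(\psi,\chi)$ of two increasing M\"obius transformations, the graph of $g$ is carried to the graph of $f$, and the $t$- and $v$-endpoints of $\mathcal{A}$ go to those of $\mathcal{D}_1$ via the relations $L_1=1/(\lambda+1)$, $L_{p-1}=\lambda-1$. The paper packages this by listing the images of the three vertices of $\mathcal{A}$ and then invoking continuity, bijectivity and preservation of (axis-parallel) lines; your fibrewise argument via the monotonicity of $\psi$ and $\chi$ is a more explicit rendering of that same boundary-matching step.
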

\begin{proof}
The vertices of $\mathcal{A}$ are mapped onto the vertices of
region $\mathcal{D}_{1}$ by $\mathcal{T}$:
\begin{eqnarray*}
\left(\frac{-2}{3 \lambda},\lambda-1 \right)
& \mapsto& \left(\frac{-\lambda}{2}, \frac{1}{\lambda+1}\right), \\
\left(\frac{-1}{\lambda+1},\lambda-1 \right)
& \mapsto& \left(1-\lambda, \frac{1}{\lambda+1}\right), \\
\left(\frac{-2}{3 \lambda},\frac{3\lambda -4}{2} \right) &
\mapsto
& \left(\frac{-\lambda}{2}, \frac{2}{\lambda+4}\right).
\end{eqnarray*}

It is easily checked that  $\mathcal T$ takes the graph of $g$ to
the graph of $f$. Since $\mathcal{T}$ is continuous and bijective
and sends straight lines to straight lines, this completes the
proof.
\end{proof}

Under $\mathcal{T}$, region $\mathcal{D}_{1}$ is mapped onto a
region with upper vertices $(\phi_1,L_2)$ and $(-L_{p-2},L_2)$.
For $i=2,\dots,p-1$ we find  a region with upper vertices
$(\phi_{i-1},L_i)$ and $(-L_{p-i},L_i)$ after applying
$\mathcal{T}^{i}$  to $\mathcal{A}$.  The lower part of this
region is bounded by the $i$th transformation of the curve $g(t)$
under $\mathcal{T}$. After $p-1$ applications of $\mathcal{T}$
this results in a region with upper vertices
$(\phi_{p-2},L_{p-1}),(-L_{1},L_{p-1})$. Clearly, this region
intersects with $\mathcal{A}$. We call $p-1$ consecutive
applications of $\mathcal{T}$ a  {\em round}.

That part of $\mathcal T^{p-1}(\mathcal A)$ lying to the left of
$t=\frac{-2}{3\lambda}$ is in region $\mathcal{B}$; the images of
these points under a subsequent application of  $\mathcal T$ have
positive $t$-coordinate. We call \emph{flushing} an application of
$\mathcal T$ to such points --- the points are flushed from
$\mathcal{D}$. The remainder of $\mathcal{T}^{p-1}(\mathcal{A})$
is a subset of $\mathcal{A}$.

\begin{Theorem}
\label{th: flushing} Any point $(t,v)$ of $\mathcal{A}$ is flushed
after exactly $k$ rounds if and only if $$ \tau_{k-1} \leq t <
\tau_{k},
$$
where $\tau_{0}=\frac{-2}{3\lambda}$ and
$$
\tau_{k}=\left[\left(-1:2,\left(-1:1\right)^{p-2}\right)^k,
\left(\frac{-2}{3\lambda}:\right)\right].
$$
For any $x$ with $\tau_{k-1}\leq t_n < \tau_k$,
$$
\min\{\Theta_{n-1},\Theta_{n},\ldots,\Theta_{n+k(p-1)-1},
\Theta_{n+k(p-1)}\}>\frac12,
$$
while
$$
\min\{\Theta_{n-1},\Theta_{n},\ldots,\Theta_{n+k(p-1)},
\Theta_{n+k(p-1)+1}\}<\frac12.
$$
\end{Theorem}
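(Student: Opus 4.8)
The plan is to realise one ``round'' $\mathcal{T}^{p-1}$ as a single M\"obius transformation on the $t$-axis of $\mathcal{A}$, analyse its one-dimensional dynamics to obtain the dichotomy, and then read off the $\Theta_j$ from~\eqref{eq: formulae for theta}. \emph{Step 1 (the round map).} By Table~\ref{ConstantsEven}, on $\mathcal{A}$ the emitted digit is $\varepsilon_{n+1}\!:\!r_{n+1}=-1\!:\!2$, while on the cell $\{(t,v)\in\mathcal{D}:-\tfrac{\lambda}{2}\le t<-\tfrac1\lambda\}$ containing $\mathcal{C}\cup\mathcal{D}_1$ the digit is $-1\!:\!1$. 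Since $\mathcal{T}(\mathcal{A})=\mathcal{D}_1$ and, by the region data recorded above, $\mathcal{T}^i(\mathcal{A})$ has $t$-extent $[\phi_{i-1},-L_{p-i}]\subseteq[-\tfrac{\lambda}{2},-\tfrac1\lambda)$ for $1\le i\le p-2$ (using $L_j=1/(\lambda-L_{j-1})>1/\lambda$ for $j\ge2$), every point of $\mathcal{A}$ emits the same block $w:=\big(-1\!:\!2,(-1\!:\!1)^{\,p-2}\big)$ in its first $p-1$ digits. Hence $\mathcal{T}^{p-1}$ acts on the $t$-coordinate of $\mathcal{A}$ as a single increasing M\"obius map $M$, and $\mathcal{T}^{p-1}(\mathcal{A})$ is the region whose $t$-extent is $M\big([\tfrac{-2}{3\lambda},\tfrac{-1}{\lambda+1}]\big)=[\phi_{p-2},\tfrac{-1}{\lambda+1}]$. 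In particular $M(\tfrac{-1}{\lambda+1})=\tfrac{-1}{\lambda+1}$ while $M(\tfrac{-2}{3\lambda})=\phi_{p-2}<\tfrac{-2}{3\lambda}$ (indeed $\phi_{p-2}$ lies in the cell $[-\tfrac1\lambda,\tfrac{-2}{3\lambda})$): $M$ fixes the right endpoint of $\mathcal{A}$'s $t$-interval and pushes every other point strictly leftward. Consequently $\tau_k:=M^{-k}(\tfrac{-2}{3\lambda})$ is strictly increasing in $[\tfrac{-2}{3\lambda},\tfrac{-1}{\lambda+1})$ with $\tau_0=\tfrac{-2}{3\lambda}$, the intervals $[\tau_{k-1},\tau_k)$ exhaust $[\tfrac{-2}{3\lambda},\tfrac{-1}{\lambda+1})$, and since applying $M^{-1}$ is the same as prepending the block $w$ to the Rosen expansion, $\tau_k$ is precisely the continued fraction $[\,w^k,\big(\tfrac{-2}{3\lambda}:\big)\,]$ of the statement.

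\emph{Step 2 (the dichotomy).} The text already records that $\mathcal{T}^{p-1}(\mathcal{A})$ is cut by the vertical line $t=\tfrac{-2}{3\lambda}$ into a part with $t<\tfrac{-2}{3\lambda}$, lying in $\mathcal{B}$ and flushed by the next $\mathcal{T}$, and a part with $t\ge\tfrac{-2}{3\lambda}$, contained in $\mathcal{A}$. So for $(t,v)\in\mathcal{A}$ the next round flushes it iff $M(t)<\tfrac{-2}{3\lambda}$ --- a condition on $t$ alone. If $(t,v)$ survives $j-1$ rounds, then after them it sits in $\mathcal{A}$ with $t$-coordinate $M^{j-1}(t)$, and one more application of $\mathcal{T}^{p-1}$ places it in $\mathcal{T}^{p-1}(\mathcal{A})$, cut in the same way; thus round $j$ flushes it iff $M^{j}(t)<\tfrac{-2}{3\lambda}$, i.e.\ iff $t<\tau_j$, while by monotonicity of $M$ it survives rounds $1,\dots,j$ iff $t\ge\tau_j$. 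Combining these, $(t,v)$ is flushed after exactly $k$ rounds if and only if $\tau_{k-1}\le t<\tau_k$, and this indeed depends only on $t$.

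\emph{Step 3 (the approximation inequalities).} Take $x$ with $(t_n,v_n)\in\mathcal{A}$ and $\tau_{k-1}\le t_n<\tau_k$. One checks, by matching the explicit description of each $\mathcal{T}^i(\mathcal{A})$ --- top edge $v=L_i$, lower boundary the iterate $\mathcal{T}^i(g)$ of the curve $g$ (recall $\mathcal{T}$ sends the graph of $g$ to that of $f$), $t$-extent $[\phi_{i-1},-L_{p-i}]$ --- against the description of $\mathcal{D}$ as the part of $\Omega$ above the graphs of $f$ and $g$, that $\mathcal{T}^i(\mathcal{A})\subseteq\mathcal{D}$ for $0\le i\le p-2$; also $\mathcal{A}\subseteq\mathcal{D}$ and $\mathcal{B}\subseteq\mathcal{D}$. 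Hence, during its first $k$ rounds the orbit of $(t_n,v_n)$ visits only sets contained in $\mathcal{D}$, i.e.\ $(t_{n+m},v_{n+m})\in\mathcal{D}$ for $0\le m\le k(p-1)$; by~\eqref{eq: def D} and~\eqref{eq: formulae for theta}, $\Theta_{n+m-1},\Theta_{n+m}>\tfrac12$ for each such $m$, and the union over $m=0,\dots,k(p-1)$ gives $\min\{\Theta_{n-1},\dots,\Theta_{n+k(p-1)}\}>\tfrac12$. The flushing step sends the orbit to a point with positive first coordinate, which --- as in the proof of Lemma~\ref{th: shape D}, where $g\le f$ on $(0,\tfrac\lambda2)$ --- cannot lie in $\mathcal{D}$; since $\Theta_{n+k(p-1)}>\tfrac12$ was just shown, this forces $\Theta_{n+k(p-1)+1}<\tfrac12$, which is the second displayed inequality.

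\emph{Main obstacle.} The real content is the ``vertical cut'' assertion of Step~2 --- that $\mathcal{T}^{p-1}(\mathcal{A})$, and each of its further $\mathcal{T}^{p-1}$-images, splits into a flushed and a returning piece exactly along $t=\tfrac{-2}{3\lambda}$ --- together with the containments $\mathcal{T}^i(\mathcal{A})\subseteq\mathcal{D}$ of Step~3; both reduce to tracking the images $\mathcal{T}^i(g)$ of the boundary curve $g$ against $\partial\Omega$ and against the graph of $f$, which is elementary but is where the bookkeeping lives. One should also treat the degenerate case $q=4$ (where a round is a single step and $\tfrac{-1}{\lambda+1}=1-\lambda$) separately, although every assertion above holds there verbatim.
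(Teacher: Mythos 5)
Your proof is correct and takes essentially the same approach as the paper's. The paper's proof simply computes the $f_q$-preimages of $-\tfrac{2}{3\lambda}$ (which are exactly your iterates $M^{-k}$) and then invokes the geometric setup established just before the theorem --- the images $\mathcal{T}^i(\mathcal{A})$, the vertical cut at $t=-\tfrac{2}{3\lambda}$, and the fact that the non-flushed remainder returns to $\mathcal{A}$ --- to declare that ``the result thus clearly follows''; your Steps 1--3 merely spell out what is left implicit there, namely the monotonicity and exhaustion property of the $\tau_k$ via the one fixed point of $M$, and the containments $\mathcal{T}^i(\mathcal{A})\subseteq\mathcal{D}$ that give the $\Theta$-inequalities.
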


\begin{proof}
A point $(t,v) \in \mathcal{A}$ gets flushed after exactly $k$
rounds if $k$ is minimal such that $\mathcal{T}^{k(p-1)}(t,v)$ has
its first coordinate smaller than $\frac{-2}{3\lambda}$. We look at
pre-images of $t =\frac{-2}{3\lambda}$ under $f_q$:
\begin{eqnarray*}
f_q^{-1}\left(\frac{-2}{3\lambda}\right)
& = & \left[-1;1,\left(\frac{-2}{3\lambda}:\right) \right],\\
f_q^{-(p-1)}\left(\frac{-2}{3\lambda}\right)
& = & \left[-1;2,(-1:1)^{p-2},\left(\frac{-2}{3\lambda}:\right) \right],\\
& \vdots&\\
f_q^{-k(p-1)}\left(\frac{-2}{3\lambda}\right)
& = & \left[\left(-1;2,(-1:1)^{p-2}\right)^k,\left(\frac{-2}{3\lambda}:\right) \right] = \tau_k.
\end{eqnarray*}
The result thus clearly follows.
\end{proof}

\subsection{Metrical results}
We define $\mathcal{A}_k= \left\{ (t,v) \in \mathcal{A}\, |\, t
\geq\tau_k  \right\}$ for $k\geq 0$. From Theorem~\ref{th:
flushing} and the ergodicity of $\mathcal{T}$ (cf.\
Proposition~\ref{prop: ergodic T even}) we have the following
metrical theorem on the distribution of large blocks of ``big"
approximation coefficients.
\begin{Theorem}
\label{th: limit even} For almost all $x$ (with respect to
Lebesgue measure) and $k \geq 1$, the limit
\begin{eqnarray*} && \lim_{N \rightarrow \infty}
\frac{1}{N} \; \# \left\{  1 \leq j \leq N \, | \,
\min \{\Theta_{j-1}, \Theta_j, \ldots, \Theta_{j+k(p-1)} \}
> \frac12 \right.\\
&& \;\;\;\;\;\;\;\;\;\;\;\;\;\;\;\;\;\;\;\;\;\; \left.
\textrm{and } \Theta_{j+k(p-1)+1} < \frac12 \right\}
\end{eqnarray*}
exists and equals $\nu(\mathcal{A}_{k-1}\setminus \mathcal{A}_{k})
= \nu(\mathcal{A}_{k-1})-\nu(\mathcal{A}_{k})$.
\end{Theorem}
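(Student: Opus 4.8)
The plan is to deduce Theorem~\ref{th: limit even} from Theorem~\ref{th: flushing} together with the ergodicity of the natural extension system $(\Omega, \nu, \mathcal{T})$ as recorded in Proposition~\ref{prop: ergodic T even}. The key observation is that the event being counted is, up to a time shift, a \emph{membership event} for the orbit $\mathcal{T}^n(x,0) = (t_n,v_n)$ in a fixed subset of $\Omega$. Indeed, by Theorem~\ref{th: flushing}, a point $(t,v) \in \mathcal{A}$ has
$$
\min\{\Theta_{n-1}, \ldots, \Theta_{n+k(p-1)}\} > \tfrac12 \quad\text{and}\quad \Theta_{n+k(p-1)+1} < \tfrac12
$$
(where $(t,v) = (t_n,v_n)$) precisely when $(t,v)$ is flushed after exactly $k$ rounds, i.e.\ precisely when $\tau_{k-1} \le t < \tau_k$, which by the definition $\mathcal{A}_j = \{(t,v)\in\mathcal{A}\,|\,t \ge \tau_j\}$ is exactly the condition $(t,v) \in \mathcal{A}_{k-1} \setminus \mathcal{A}_k$. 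So the first step is to translate the combinatorial description of the block event into the single geometric condition $(t_n,v_n) \in \mathcal{A}_{k-1}\setminus\mathcal{A}_k$.

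Next I would invoke the Birkhoff Ergodic Theorem. Since $(\Omega,\nu,\mathcal{T})$ is weak Bernoulli, hence ergodic (as stated after Equation~(\ref{def: measure even})), for the indicator function $\mathbf{1}_{\mathcal{A}_{k-1}\setminus\mathcal{A}_k}$ we get, for $\nu$-almost every starting point $(x,y)\in\Omega$,
$$
\lim_{N\to\infty}\frac{1}{N}\#\{1\le j\le N \,|\, \mathcal{T}^j(x,y) \in \mathcal{A}_{k-1}\setminus\mathcal{A}_k\} = \nu(\mathcal{A}_{k-1}\setminus\mathcal{A}_k) = \nu(\mathcal{A}_{k-1}) - \nu(\mathcal{A}_k),
$$
the last equality being immediate from $\mathcal{A}_k \subset \mathcal{A}_{k-1}$. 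The set $\mathcal{A}_{k-1}\setminus\mathcal{A}_k$ is a Borel set of positive, finite measure, so there is no integrability issue. To pass from $\nu$-a.e.\ $(x,y)$ to Lebesgue-a.e.\ $x$ with the specific initial point $(x,0)$, I would cite Proposition~\ref{prop: ergodic T even}: for Lebesgue-almost every $G_q$-irrational $x$ the sequence $\mathcal{T}^n(x,0)$ is distributed over $\Omega_q$ according to $g_q$, which is exactly the statement that the orbit of $(x,0)$ is $\nu$-generic, so the Birkhoff average along $\mathcal{T}^n(x,0)$ converges to the same limit $\nu(\mathcal{A}_{k-1}\setminus\mathcal{A}_k)$.

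The only remaining point is bookkeeping about the time index: the event in the statement is phrased with running index $j$ so that $\min\{\Theta_{j-1},\ldots,\Theta_{j+k(p-1)}\} > \tfrac12$ and $\Theta_{j+k(p-1)+1} < \tfrac12$, which by~(\ref{eq: formulae for theta}) and Theorem~\ref{th: flushing} corresponds to $(t_j,v_j) \in \mathcal{A}_{k-1}\setminus\mathcal{A}_k$; a fixed finite shift in the index does not affect the Cesàro limit, so the density of $\{j : (t_j,v_j)\in\mathcal{A}_{k-1}\setminus\mathcal{A}_k\}$ equals the density of the counted set. The main (and only mild) obstacle is making sure that the ``if and only if'' in Theorem~\ref{th: flushing} is genuinely an equivalence on all of $\mathcal{A}$ and that points \emph{outside} $\mathcal{A}$ never produce the block event — but this is already contained in the preceding discussion (the analysis of regions $\mathcal{B}$, $\mathcal{C}_1$, $\mathcal{C}_2$ and Lemma~\ref{lem: theta's on region (III)} shows that $\min\{\Theta_{n-1},\Theta_n,\Theta_{n+1}\}>\tfrac12$ forces $(t_n,v_n)\in\mathcal{A}$), so no new work is needed and the theorem follows.
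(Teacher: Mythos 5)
Your overall approach matches the paper's: translate the block event into a set membership condition for $(t_j,v_j)$, then invoke ergodicity/Birkhoff via Proposition~\ref{prop: ergodic T even}. That outline is fine. However, the key translation step has a genuine gap, because you mis-cite Lemma~\ref{lem: theta's on region (III)}.

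You assert that ``Lemma~\ref{lem: theta's on region (III)} shows that $\min\{\Theta_{n-1},\Theta_n,\Theta_{n+1}\}>\tfrac12$ forces $(t_n,v_n)\in\mathcal{A}$.'' That is not what the lemma says: it states that the subset of $\mathcal{D}$ on which $\Theta_{n+1}>\tfrac12$ is exactly $\mathcal{D}_1\cup\mathcal{C}_1\cup\mathcal{A}$. So three consecutive large $\Theta$'s only force $(t_n,v_n)\in\mathcal{D}_1\cup\mathcal{C}_1\cup\mathcal{A}$, and the equivalence ``block event at $j$ $\Longleftrightarrow$ $(t_j,v_j)\in\mathcal{A}_{k-1}\setminus\mathcal{A}_k$'' is left unproved in one direction: you have $(\Leftarrow)$ from Theorem~\ref{th: flushing}, but for $(\Rightarrow)$ you must rule out the possibility that the run of $k(p-1)+2$ consecutive large coefficients followed by a small one \emph{starts} with $(t_j,v_j)$ in $\mathcal{D}_1$ or $\mathcal{C}_1$ rather than in $\mathcal{A}$.

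That exclusion does hold, but it needs an argument you do not give. The point is a modular/strip bookkeeping: $\mathcal{D}_1\cup\mathcal{C}_1$ lies in $J_1\cup\cdots\cup J_{p-2}$ while $\mathcal{A}\cup\mathcal{B}$ lies in $J_{p-1}$, and for points of $\mathcal{D}$ the $J$-index advances by $1$ under $\mathcal{T}$, resetting from $J_{p-1}$ to $J_1$ only from $\mathcal{A}$. Thus if $(t_j,v_j)\in\mathcal{C}_1\cap J_{i_0}$ (or $\mathcal{D}_1$, $i_0=1$), the orbit first reaches $J_{p-1}$ (hence $\mathcal{A}\cup\mathcal{B}$) after exactly $p-1-i_0$ steps, with $1\le p-1-i_0\le p-2$; combining this offset with the subsequent round structure from Theorem~\ref{th: flushing} shows the orbit is flushed at a time $j+m+1$ with $m\equiv p-1-i_0\not\equiv 0\pmod{p-1}$, so $m$ can never equal $k(p-1)$. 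Hence the block event forces $i_0=p-1$, i.e.\ $(t_j,v_j)\in J_{p-1}\cap\mathcal{D}=\mathcal{A}\cup\mathcal{B}$, and $\mathcal{B}$ is excluded since it flushes after a single step. Without some version of this argument, the identification of the counted set with $\mathcal{A}_{k-1}\setminus\mathcal{A}_k$ --- on which your whole Birkhoff step rests --- is not justified.
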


In order to apply Theorem~\ref{th: limit even} we compute
$\nu(\mathcal{A})$ and $\nu(\mathcal{A}_k)$:
\begin{eqnarray}
\nu(\mathcal{A})&=&C_q \int_{t=\frac{-2}{3\lambda}}^{-L_1} \int_{v=g(t)}^{L_{p-1}}
\frac{1}{(1+tv)^2}\,dv \,dt\nonumber \\
&=& -C_q \int_{\frac{-2}{3\lambda}}^{-L_1}\left(  \frac{1}{2t^2}
+  \frac{1}{t} -\frac{L_{p-1}}{1+L_{p-1}t} \right)\,dt \nonumber \\
&=& C_q \left( \frac{-1}{2L_1} + \log\left|L_{p-1}+\frac{-1}{L_1}\right|
+ \frac{3\lambda}{4} - \log\left|L_{p-1}-\frac{3\lambda}{2}\right| \right) \label{eq:area A explicit}.
\end{eqnarray}
Using the normalizing constant $C_q$ from Definition~\ref{def:
measure even}, $L_1=\displaystyle{\frac{1}{\lambda+1}}$ and
$L_{p-1}=\lambda-1$, it follows from~(\ref{eq:area A explicit})
that
$$
\nu(\mathcal{A}) = \displaystyle{\frac{1}{\log[(1+\lambda/2)/\sin \pi/q]}} \left(
\frac{\lambda-2}{4} + \log \frac{4}{\lambda+2}\right) \label{eq:
measure A}.
$$
Similarly,
\[
\begin{aligned}
\nu(\mathcal{A}_k)&=&C_q \int_{t=\tau_k}^{-L_1}
\int_{v=g(t)}^{L_{p-1}}
\frac{1}{(1+tv)^2}\,dv \,dt \\
\\
&=& \label{eq: measure A_i} C_q\left[\,
\frac{(-\lambda-1)\tau_k-1}{2\tau_k}+\log\left|\frac{2\tau_k}{(\lambda-1)\tau_k+1}\right|\;\right]\,.
\end{aligned}
\]

\begin{Example} If $q=8$ we have $ \nu(\mathcal{A}) = 4.6 \cdot 10^{-4}, \nu(\mathcal{A}_1)
= 7.6 \cdot 10^{-7},$ and $\nu(\mathcal{A}_2) = 6.7 \cdot
10^{-10}$. So Theorem~\ref{th: limit even} yields that for almost
every $x$ about $4.6 \cdot 10^{-2}\,\%$ of the blocks of consecutive
approximation coefficients of length $6$ have the property that
\[\min
\{\Theta_{j-1}, \Theta_j, \ldots, \Theta_{j+3} \} > \frac12
 \quad \textrm{and } \Theta_{j+4} < \frac12,\]
while about $7.6 \cdot 10^{-5}\,\%$ of the blocks of length $9$ have
the property that
\[\min
\{\Theta_{j-1}, \Theta_j, \ldots, \Theta_{j+6} \} > \frac12
 \quad \textrm{and } \Theta_{j+7} < \frac12.\]
 \end{Example}

\subsection{Tong's spectrum for even $q$}
We are now ready to determine the Tong spectrum for these Rosen
continued fractions. Recall that Theorem~\ref{th: tong even}
states that the minimum of $1 + k(p-1)$ consecutive values of
$\Theta_{j}$, beginning with $\Theta_{n-1}$ is less than
$c_{k-1}:=\frac{-\tau_{k-1}}{1+(\lambda-1)\tau_{k-1}}$, where the
$\tau_{j}$ are given in Theorem~\ref{th: flushing}.\smallskip

\emph{Proof of Theorem \ref{th: tong even}.} We start with $k=1$.
Assume that $(t_n,v_n) \in \mathcal{D}$, otherwise we are done.
For a certain $0\leq i \leq p-2$ we have $(t_{n+i},v_{n+i})$ is
either in $\mathcal{A}$ or flushed to $\Omega_+$. In the latter
case $\Theta_{n+i}<\frac12$ and we are done. So assume that
$(t,v)=(t_{n+i},v_{n+i})\in \mathcal{A}$. It follows from
Lemma~\ref{lem: theta's on region (III)} that
$\min\{\Theta_{n-1},\Theta_{n},\Theta_{n+1}\}=\Theta_{n}$ on
$\mathcal{A}$.  We give an upper bound for this minimum, by
determining the maximum value of $\Theta_n$.

On $\mathcal{A }$ one has  $\Theta_n = \frac{-t_n}{1+t_nv_n}$, so
we must find the point $(t,v)\in\mathcal{A}$ where $m(t,v) :=
\frac{-t}{1+tv}$ attains its maximum. We have
$$
\frac{\partial m(t,v)}{\partial t} = \frac{-1}{(1+tv)^2}<0 \quad
\textrm{and} \quad \frac{\partial m(t,v)}{\partial v} =
\frac{t^2}{(1+tv)^2}>0.
$$
Thus $\Theta_n$ attains its maximum at the upper left corner
$\left(\frac{-2}{3\lambda},\lambda-1\right)$ of $\mathcal{A}$.
This maximum equals $\frac{2}{\lambda +2} = c_0.$

It is left to show that $c_0$ is the best possible constant. We
need to check that for the first $p-2$ points in either direction
on the orbit of $\left(\frac{-2}{3\lambda},\lambda-1\right)$ one
has $\min\{\Theta_{n-1},\Theta_{n}\} \geq c_0.$ These  points on
the orbit of $\left(\frac{-2}{3\lambda},\lambda-1\right)$ are
\begin{eqnarray*}
&&\left(\left[(-1:1)^{p-2},\left(\frac{-2}{3\lambda}:\right)\right], L_1\right),
\left(\left[(-1:1)^{p-1},\left(\frac{-2}{3\lambda}:\right)\right],
L_2\right), \ldots,\\
&&\left(\left[(-1:1),\left(\frac{-2}{3\lambda}:\right)\right],
L_{p-2}\right), \left(\frac{-2}{3\lambda},\lambda-1\right),
\left(\frac{-\lambda}{2},L_1\right),\\&& \left(\phi_1,L_2\right),
\left(\phi_2,L_3\right),\ldots, (\phi_{p-3},L_{p-2}).
\end{eqnarray*}
We consider the curves $\Theta_{n-1} = c_0$  and $ \Theta_{n} =
c_0$, thus
$$
f_1(x) = \frac{2}{\lambda+2-2x} \quad \textrm{ and} \quad
g_1(x)=\frac{-(\lambda+2)x-2}{2x}.
$$
The graph of $f_1(x)$ intersects with the $x$-axis at the point
$\left(0,\frac{2}{\lambda+2}\right)$. For the heights $L_i$ we
easily find that $L_{p-2}>\ldots>L_3>L_{2}>\frac{2}{\lambda+2}.$
Thus, each of the first $p-2$ points in either direction of the
orbit has either $y$-coordinate greater than $L_2$ or is one of
the points $\left(-\frac{\lambda}{2},L_1\right)$ or
$\left(\left[(-1:1)^{p-2},\left(\frac{-2}{3\lambda}:\right)\right],L_1\right)$.
At the point $\left(-\frac{\lambda}{2},L_1\right)$ the value of
$\Theta_{n-1}$ is exactly $c_1 = \frac{2}{\lambda+2}$. Further, we
know that $\frac{\partial \Theta_{n-1}}{\partial t_n}>0$ on region
$\mathcal C$, so
\[\Theta_{n-1} \left(\left[(-1:1)^{p-2},
\left(\frac{-2}{3\lambda}:\right)\right],L_1\right)>
\Theta_{n-1}\left(-\frac{\lambda}{2},L_1\right)\,.\]

Together this means that for all these points one finds
$\Theta_{n-1}\geq c_0.$ The graph of $g_1(x)$ intersects with the
$x$-axis in the point $\left(\frac{-2}{\lambda+2},0\right)$. We
have
$$
-\frac{\lambda}{2}< \phi_{1} < \ldots < \phi_{p-1}<
\left[-1:1,\left(\frac{-2}{3\lambda}:\right)\right]<\frac{-2}{\lambda+2}\;,
$$
so for the relevant points on the orbit of
$\left(\frac{-2}{3\lambda}, \lambda-1\right)$ we have
$\Theta_{n}>c_0\,$.

This proves the case $k=1$. For general $k$, assume that the
starting point $(t_n,v_n) \in \mathcal{A}$ did not get flushed
during the first $k-1$ rounds, otherwise we are done. Consider
again a point $(t,v)=(t_{n+i},v_{n+i})$ in $\mathcal{A}$ with
$0\leq i \leq p-2$. From Theorem~\ref{th: flushing}  this means
that $t \geq \tau_{k-1}$. We find that the maximum of $\Theta_n$ occurs at the
upper-left corner of the region where $(t,v)$ is located. This
maximum is given by
\[\Theta_n(\tau_{k-1},\lambda-1)=\frac{-\tau_{k-1}}{1+(\lambda-1)\tau_{k-1}}
= c_{k-1}.\] The proof that this is the best possible is similar
to the case $k=1$. \qed

\bigskip

Note that
\[\lim_{k\rightarrow \infty} \tau_k = [\overline{(-1:2,(-1:1)^{p-2})}] = \frac{-1}{\lambda+1},\]
yielding
\begin{equation}\label{eq:borel-EvenCase}
\lim_{k\rightarrow \infty}
\frac{-\tau_k}{1+(\lambda-1)\tau_k}=\frac12.
\end{equation}

\begin{Lemma}\label{lem:FixedAndFlushedEven}   Let $\mathcal F$ denote the
fixed point set in ${\mathcal D}$ of ${\mathcal T}^{(p-1)}$. Then
\begin{enumerate}
\item[($i$)]  $\mathcal F = \{\, {\mathcal
T}^i(-1/(\lambda +1),\lambda -1)\,|\, i=0,1,\dots,p-1\}$;
\item[($ii$)]     For every  $x$ and every $n\geq 0$,
$(t_n,v_n) \notin \mathcal F$;
\item[($iii$)]  For every $G_q$-irrational number $x$ there
are infinitely many $n$ for which $(t_n,v_n)\notin {\mathcal D}$;
\item[($iv$)]  For each
$i=0,1,\dots,p-1$, let  $x_i=f_q^i(-1/(\lambda +1))\,$.   Then
for all $n\geq 0$,  ${\mathcal T}^n(x_i,0)\notin {\mathcal D}$.
However, ${\mathcal T}^{k(p-1)}(x_i,0)$ converges from below along
the vertical line $x=x_i$ to ${\mathcal T}^i(-1/(\lambda +1),
\lambda -1)$.
\end{enumerate}
\end{Lemma}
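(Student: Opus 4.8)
The plan is to exploit the fact, established in Theorem~\ref{th: flushing}, that the first-coordinate action of $\mathcal{T}^{p-1}$ on the interval $J_{p-2}\supset[-2/(3\lambda),-1/(\lambda+1))$ is the inverse-branch map $f_q^{-(p-1)}$ restricted to the subinterval where orbits stay in $\mathcal{A}$, together with the explicit description of $\mathcal{A}$ and its iterates. For ($i$), I would first note that a point $(t,v)\in\mathcal{D}$ fixed by $\mathcal{T}^{p-1}$ must have its $\mathcal{T}$-orbit never leave $\mathcal{D}$ (since $\mathcal{T}$ flushes points of $\mathcal{B}$ to $\Omega_+$ and then to $\Omega\setminus\mathcal{D}$), hence in particular the orbit must cycle through $\mathcal{A}$ exactly once per round; so $(t,v)$ is determined by a fixed point of $\mathcal{T}^{p-1}$ restricted to $\mathcal{A}$. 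On $\mathcal{A}$ the first coordinate evolves by $f_q^{p-1}$, whose inverse branch is the Möbius map $t\mapsto[(-1:2,(-1:1)^{p-2}),(t:)]$; its attracting fixed point is $[\overline{(-1:2,(-1:1)^{p-2})}]=-1/(\lambda+1)$, as already computed just after the proof of Theorem~\ref{th: tong even}. Since $f_q^{p-1}$ is \emph{expanding} there, $-1/(\lambda+1)$ is the unique fixed $t$-value, and the corresponding $v$-value is forced by the equation $v=1/(r\lambda+\varepsilon v)$ iterated around the cycle, giving $v=\lambda-1$; this pins down the single $\mathcal{A}$-fixed point $(-1/(\lambda+1),\lambda-1)$, and its $p-1$ images under $\mathcal{T}$ constitute $\mathcal{F}$. (One checks these images are genuinely distinct and lie in $\mathcal{D}$ using the vertices listed in the proof of Theorem~\ref{th: tong even}.)

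For ($ii$), the point is that $(t_n,v_n)=\mathcal{T}^n(x,0)$ always has $v_n$ a \emph{finite} (hence $G_q$-rational-type) truncation $[1:r_n,\dots,\varepsilon_2:r_1]$, whereas each coordinate of a point of $\mathcal{F}$ is a purely periodic Rosen expansion, in particular $G_q$-irrational; since $v_n$ cannot equal an irrational number, $(t_n,v_n)\notin\mathcal{F}$. Part ($iii$) then follows by combining ($ii$) with Theorem~\ref{th: flushing}: if for some $x$ we had $(t_n,v_n)\in\mathcal{D}$ for all large $n$, then by Lemma~\ref{lem: theta's on region (III)} and the round structure the orbit would have to enter $\mathcal{A}$ infinitely often and never be flushed, forcing $t_{n}\to -1/(\lambda+1)$ and convergence of the orbit to a point of $\mathcal{F}$; but the orbit is a sequence of distinct points in $\Omega$ with $v$-coordinates of strictly increasing combinatorial length, so it cannot converge to a point of $\mathcal{F}$, a contradiction. (Alternatively, one invokes Theorem~\ref{th: flushing} directly: for the block containing index $n$, the value $\tau_{k-1}\le t_m<\tau_k$ for some finite $k$ since $t_m\neq -1/(\lambda+1)=\lim\tau_k$, and then the block is flushed, so $(t_{m+k(p-1)+1},\cdot)\notin\mathcal{D}$.)

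For ($iv$), fix $x_i=f_q^i(-1/(\lambda+1))$, whose Rosen expansion is $(-1:1)^{p-1-i},\overline{(-1:2,(-1:1)^{p-2})}$ up to normalization; here $v_n=[1:r_n,\dots]$ is always $G_q$-rational (finite), so $(t_n,v_n)$ lies on the vertical line $t=x_i$ with $v_n$ strictly below the limiting value, hence $(t_n,v_n)\notin\mathcal{D}$ by the same length/irrationality argument as in ($ii$) applied along that line; and since $t_n$ is eventually equal to $x_i$ (periodic), the subsequence $\mathcal{T}^{k(p-1)}(x_i,0)$ has $v$-coordinate the Möbius iterates $w\mapsto 1/(\lambda-1+\cdots)$ converging monotonically up to the fixed value $\lambda-1$ of the cycle, which is exactly the $v$-coordinate of $\mathcal{T}^i(-1/(\lambda+1),\lambda-1)$. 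I expect the main obstacle to be ($iv$): one must verify the \emph{monotone} (from below) convergence of the $v$-iterates, which requires checking that the relevant composition of the maps $w\mapsto 1/(r\lambda+\varepsilon w)$ around one round is an increasing contraction on the interval $[0,\lambda-1]$ fixing $\lambda-1$ — a short but genuine computation using the recursions $(\mathcal{R}_j)$ for the $L_j$, rather than anything deep.
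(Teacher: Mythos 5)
Your proposal follows essentially the same route as the paper: (ii) is verbatim the paper's rational-vs-irrational second-coordinate argument, (iii) is the same corollary via flushing, and for (i) your uniqueness argument via the expanding/contracting dynamics on $\mathcal A$ is just a hands-on way of saying what the paper states, namely that every point of $\mathcal D$ off the periodic orbit is eventually flushed (Theorem~\ref{th: flushing}). For (iv), which the paper dismisses as ``easily checked,'' your sketch identifies the correct key computation (monotone convergence of the $v$-iterates along the round), but beware two small slips: $(t_n,v_n)$ lies on the line $t=x_i$ only for $n$ a multiple of $p-1$ (for other $n$ one is at another $x_j$ of the cycle, and the same reasoning must be applied there), and the argument that $(t_n,v_n)\notin\mathcal D$ is not ``the same length/irrationality argument as in (ii)'' (which only gives $\notin\mathcal F$) but rather the observation that $v_n$ increases strictly to the critical value (here $\lambda-1 = L_{p-1}$), while at $t = x_i$ membership in $\mathcal D$ would force $v$ strictly above that value, which is already the top of $\Omega$ there.
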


\begin{proof}   (i) Since ${\mathcal T}^{(p-1)}$ fixes $(-1/(\lambda +1),\lambda -1)$,
$\mathcal F$ certainly contains the $\mathcal T$-orbit of
$(-1/(\lambda +1),\lambda -1)$. On the other hand, any point of
$\mathcal D$ not in this orbit is eventually flushed from
$\mathcal D$.

(ii)  The second coordinate of any element of $\mathcal F$ is
$G_q$-irrational, but for all $x$, the points $(t_n,v_n)$ have
$G_q$-rational second coordinate.

(iii) From the previous item, for any $G_q$-irrational $x$  the
$\mathcal T$-orbit of $(x,0)$  avoids $\mathcal F$, hence each
time this orbit encounters $\mathcal D$, it is flushed out within
a finite number of iterations.   We conclude indeed that for every
$G_q$-irrational $x$ there are infinitely many $n$ for which
$(t_n,v_n)\notin {\mathcal D}$.

(iv) The final item is easily checked.
\end{proof}

Combining Theorem~\ref{th: tong even},
Equation~(\ref{eq:borel-EvenCase}) and the previous lemma, we have
the following result.
\begin{Theorem}\label{thm:BorelEven}
For every $G_q$-irrational $x$ there are infinitely many $n \in
\N$ for which
\[ \Theta_n \leq \mathcal{H}_q = \frac12.\]
The constant $1/2$ is best possible.
\end{Theorem}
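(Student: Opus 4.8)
The plan is to deduce Theorem~\ref{thm:BorelEven} directly from the machinery already assembled, so the proof is essentially one of assembly plus a small extra argument to prove sharpness. First I would establish the \emph{existence} of infinitely many $n$ with $\Theta_n \le 1/2$. Fix a $G_q$-irrational $x$. By Lemma~\ref{lem:FixedAndFlushedEven}(iii), there are infinitely many $n$ with $(t_n,v_n)\notin\mathcal{D}$; for each such $n$ the defining inequality of $\mathcal{D}$ fails, so $\min\{\Theta_{n-1},\Theta_n\}\le 1/2$, hence at least one of $\Theta_{n-1},\Theta_n$ is $\le 1/2$. Since this happens for infinitely many distinct $n$, we get infinitely many indices $m$ (namely $m=n-1$ or $m=n$) with $\Theta_m \le 1/2 = \mathcal{H}_q$. (Alternatively, and perhaps more cleanly for the write-up, one invokes Theorem~\ref{th: tong even}: the minimum of a block of $1+k(p-1)$ consecutive $\Theta_j$ is $< c_{k-1}$, and by Equation~(\ref{eq:borel-EvenCase}) the constants $c_{k-1}\downarrow 1/2$; letting $k\to\infty$ over disjoint blocks produces infinitely many $n$ with $\Theta_n<1/2+\varepsilon$ for every $\varepsilon>0$, but one still needs the non-strict bound $\le 1/2$, which is exactly what the $\mathcal{D}$-exit argument above supplies directly, so I would use that argument.)

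Second I would prove that $1/2$ is \emph{best possible}, i.e.\ that the constant cannot be lowered: there is a $G_q$-irrational $x$ for which $\Theta_n \le 1/2$ holds for only finitely many $n$, or more precisely for which $\liminf_n \Theta_n$ can be made arbitrarily close to (but not below) $1/2$ — actually the sharp statement is that for any $c<1/2$ there is a $G_q$-irrational $x$ with $\Theta_n > c$ for all sufficiently large $n$. The natural candidate is the point $x_i = f_q^i(-1/(\lambda+1))$ of Lemma~\ref{lem:FixedAndFlushedEven}(iv), whose $\mathcal{T}$-orbit converges (from below, along the vertical line $x=x_i$) to the fixed point $\mathcal{T}^i(-1/(\lambda+1),\lambda-1)\in\mathcal{F}$. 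Along this fixed orbit the approximation coefficients converge to their values at the fixed point; one computes via~(\ref{eq: formulae for theta}) that at $(-1/(\lambda+1),\lambda-1)$ one has $\Theta = \frac{\lambda-1}{1+(-1/(\lambda+1))(\lambda-1)} = \frac{\lambda-1}{1-(\lambda-1)/(\lambda+1)} = \frac{(\lambda-1)(\lambda+1)}{2} = \frac{\lambda^2-1}{2}$, and similarly for the $t$-coordinate value $\tfrac12$ of Lemma~\ref{lem:FixedAndFlushedEven}. One checks directly that every $\Theta_n$ along this orbit exceeds $1/2$, and that the sequence $(\Theta_n)$ has $1/2$ as an accumulation value (coming from the coordinate equal to $\tfrac12$ in the fixed point), but never actually attains or goes below $1/2$. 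Thus no constant smaller than $1/2$ works, while $1/2$ itself is attained as a limit — establishing optimality.

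The main obstacle is the sharpness half, and within it the delicate point is verifying that along the orbit of $x_i$ one genuinely has $\Theta_n > 1/2$ for \emph{all} $n$ (not merely for the block of length $1+k(p-1)$ that Theorem~\ref{th: tong even} controls), and simultaneously that $1/2$ is an accumulation point so nothing larger than $1/2$ is forced either. This requires tracking the full orbit $\mathcal{T}^n(x_i,0)$ through the subregions $\mathcal{A},\mathcal{B},\mathcal{C},\mathcal{D}_1$ and the "round" structure of Theorem~\ref{th: flushing}: the orbit stays in $\mathcal{D}$ arbitrarily long (converging to $\mathcal{F}$) but by part (ii) never lands in $\mathcal{F}$, so each $(t_n,v_n)$ lies strictly inside $\mathcal{D}$ where $\min\{\Theta_{n-1},\Theta_n\}>1/2$; combined with Lemma~\ref{lem: theta's on region (III)} controlling $\Theta_{n+1}$ on $\mathcal{A}$, this pins the relevant approximation coefficients strictly above $1/2$, while the convergence to the fixed point forces $1/2$ as the infimal accumulation value. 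I expect the existence half to be a two-line consequence of Lemma~\ref{lem:FixedAndFlushedEven}(iii) and the definition of $\mathcal{D}$, so the write-up should spend most of its length on the orbit-of-$x_i$ computation for sharpness.
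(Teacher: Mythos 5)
Your existence argument is correct and matches the spirit of the paper's assembly (the paper simply cites Theorem~\ref{th: tong even}, Equation~(\ref{eq:borel-EvenCase}) and Lemma~\ref{lem:FixedAndFlushedEven}): Lemma~\ref{lem:FixedAndFlushedEven}(iii) gives infinitely many $n$ with $(t_n,v_n)\notin\mathcal D$, and by the definition~(\ref{eq: def D}) of $\mathcal D$ this forces $\min\{\Theta_{n-1},\Theta_n\}\le 1/2$ for each such $n$, yielding infinitely many indices with $\Theta_m\le 1/2$.

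The sharpness half, however, contains a genuine error: you have read Lemma~\ref{lem:FixedAndFlushedEven}(iv) backwards. That lemma says that for $x_i=f_q^i(-1/(\lambda+1))$ one has $\mathcal T^n(x_i,0)\notin\mathcal D$ for \emph{every} $n\ge 0$; the orbit of $x_i$ never enters $\mathcal D$, it only accumulates on the fixed set $\mathcal F$ which lies on the \emph{boundary} of $\mathcal D$ (on the curve $v=g(t)$, where $|t|/(1+tv)=\tfrac12$). Your write-up instead asserts that the orbit ``stays in $\mathcal D$ arbitrarily long'' and that ``each $(t_n,v_n)$ lies strictly inside $\mathcal D$ where $\min\{\Theta_{n-1},\Theta_n\}>1/2$,'' and on this basis concludes that every $\Theta_n$ along the orbit exceeds $1/2$. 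That conclusion is the exact opposite of the truth and is inconsistent with your own existence argument applied to $x_i$: since $(t_n,v_n)\notin\mathcal D$ for every $n$, one of $\Theta_{n-1},\Theta_n$ is $\le 1/2$ for every $n$, so there are infinitely many $\Theta_n\le 1/2$. Indeed a direct computation along the subsequence $n=k(p-1)$, where $(t_n,v_n)=(-1/(\lambda+1),v_n)$ with $v_n\nearrow\lambda-1$, gives $\Theta_n=\dfrac{1}{\lambda+1-v_n}\nearrow\tfrac12$ from \emph{below}, never from above.

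The repair is short and leaves your overall plan intact: the correct sharpness statement is that the $\Theta_n$'s of $x_i$ which are $\le 1/2$ form a sequence converging to $1/2$ (from below), because the orbit converges to $\mathcal F$, at whose points the two approximation coefficients given by~(\ref{eq: formulae for theta}) equal $\tfrac12$ and $\tfrac{\lambda^2-1}{2}\ge\tfrac12$, and the intermediate values along the periodic orbit of $\mathcal F$ are likewise $\ge\tfrac12$ (this is the content of the ``best possible'' verification already carried out in the proof of Theorem~\ref{th: tong even}). Consequently, for any $c<1/2$ only finitely many $n$ have $\Theta_n(x_i)\le c$, so $1/2$ cannot be replaced by any smaller constant. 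Put differently, optimality comes not from $\Theta_n$ staying above $1/2$, but from the sub-$1/2$ values of $\Theta_n$ creeping up to $1/2$.
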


\section{Tong's spectrum for odd indices $q=2h+3$}
\label{sec: odd-indices}

The classical case of $q=3$ is Tong's result itself; a geometric
argument is given in \cite{[HK]}.    In our treatment, the case of
$q=5$  is also exceptional and we do not give full details for it.
See in particular the remarks  after Lemmata ~\ref{lem: f g odd} and~\ref{lem: T A D}.

The results for the odd case are derived similarly to those of the
even case. However, this case has more complicated dynamics,
complicating the arguments.     We begin with the
definition of the natural extension.

Again let $\phi_j = f_q^j(-\lambda/2)$.  Set $h=\frac{q-3}{2}$
and define $J_j,\, j\in  \{ 1,\cdots , 2h+2 \}$, by
\begin{align*}
J_{2k}& = [\phi_{h+k},\phi_k), {\mbox{ for }} k\in \{ 1,\cdots , h \} ,\\
J_{2k+1}& = [\phi_k,\phi_{h+k+1}), {\mbox{ for }} k\in \{ 0,1,\cdots , h \} ,
\end{align*}
and $J_{2h+2}\, =\, [0,\frac{\lambda}{2})$. Let $K_j=[0,L_j]$ for
$j\in \{ 1,\cdots , 2h+1 \} $ and let $K_{2h+2} =[0,R]$, where $R$
is the solution of
\[R^2 + (2-\lambda) R-1 =0.\]

\begin{Theorem}\emph{\textbf(\cite{[BKS]})}\label{thm2} Let $q = 2 h + 3$, with $ h \ge 1$
and $\Omega\, =\, \bigcup_{j=1}^{2h+2} J_j\times K_j$. With $R$
defined as above, we have
\begin{equation*}
\left\{
\begin{array}{llll}
({\mathcal R}_0): & R &=&\lambda - L_{2h+1},\\
({\mathcal R}_1): & L_1& =&1/(2\lambda -L_{2h}),\\
({\mathcal R}_2): & L_2 &=&1/(2\lambda -L_{2h+1}),\\
({\mathcal R}_j): & L_j &=& 1/(\lambda -L_{j-2}) \qquad \text{for }\, 2< j < 2h+2,\\
({\mathcal R}_{2h+2}): & R& =& 1/(\lambda -L_{2h}),
\end{array}
\right.
\end{equation*}
while ${\mathcal T}:\, \Omega \rightarrow \Omega$ is bijective off
a set of Lebesgue measure zero.
\end{Theorem}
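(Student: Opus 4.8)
The plan is to mirror the even-$q$ development, so the proof of Theorem~\ref{thm2} reduces to verifying that the prescribed region $\Omega = \bigcup_{j=1}^{2h+2} J_j\times K_j$ is carried onto itself bijectively (modulo Lebesgue-null sets) by $\mathcal T$, and that the constants $L_j, R$ satisfying the listed relations $(\mathcal R_0)$--$(\mathcal R_{2h+2})$ actually exist and make the fibered pieces fit together. First I would recall that, by Definition~\ref{natural extension even}, on a cylinder where $(\varepsilon, r)$ are constant the map acts as $(x,y)\mapsto\bigl(\varepsilon/x - \lambda r,\ 1/(r\lambda + \varepsilon y)\bigr)$; so on each $J_j$ the first coordinate map is a M\"obius transformation sending $J_j$ onto one of the other $J_i$'s (this is essentially the combinatorics already encoded in the definition of the $\phi_k$ via $\phi_j = f_q^j(-\lambda/2)$), and the second-coordinate map sends the fiber $K_j = [0,L_j]$ (or $[0,R]$) onto the fiber over the image. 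The even-odd interleaving in the definition of $J_{2k}, J_{2k+1}$ is exactly what records which digit $(\varepsilon,r)$ is constant on which piece.

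The core computation I would carry out is a bookkeeping of images of fibers. For a piece on which $f_q$ uses digit $(\varepsilon, r)$, the new $y$-coordinate ranges over $\{1/(r\lambda + \varepsilon y) : y \in K_j\}$; since $K_j$ is an interval with left endpoint $0$, its image is an interval with endpoint $1/(r\lambda)$ and other endpoint $1/(r\lambda + \varepsilon L_j)$. Matching these images against the required fibers $K_i$ of the target pieces is precisely what produces the recursions: the generic relation $L_j = 1/(\lambda - L_{j-2})$ comes from the pieces where $r=1$ and the predecessor two steps back supplies $\varepsilon L = -L_{j-2}$; the special relations $(\mathcal R_1), (\mathcal R_2)$ with $2\lambda$ in the denominator come from the pieces closest to $-\lambda/2$ where $r=2$; and $(\mathcal R_0), (\mathcal R_{2h+2})$ handle the interface with the block $J_{2h+2} = [0,\lambda/2)$ and its fiber $[0,R]$, where the digit on the positive side forces the quadratic $R^2 + (2-\lambda)R - 1 = 0$. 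I would then check that the $J_i\times K_i$ have essentially disjoint interiors, that their union is mapped onto the same union, and that the M\"obius pieces are injective on each block — giving bijectivity off a null set. Existence of a genuine solution $(L_1,\dots,L_{2h+1},R)$ of the coupled system I would get either by exhibiting it explicitly (the $L_j$ are continued-fraction-like quantities in $\lambda$, and $R = \tfrac12(\lambda - 2 + \sqrt{(\lambda-2)^2 + 4}\,)$ from the quadratic) or by a monotonicity/fixed-point argument on the finite recursion, checking the values stay in $(0,\lambda/2)$ so the pieces are nondegenerate.

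The step I expect to be the main obstacle is the careful matching of the fiber images at the two "seams" of the cycle — near $t = -\lambda/2$, where the digit jumps from $r=1$ to $r=2$ and two relations $(\mathcal R_1), (\mathcal R_2)$ are needed rather than one, and near $t = 0$, where the positive-$t$ block $J_{2h+2}$ with its distinct fiber $[0,R]$ meets the negative-$t$ blocks; this is exactly the place the paper flags as making the odd case "more complicated" and where $q=3$ and $q=5$ degenerate (too few blocks for the generic pattern $L_j = 1/(\lambda - L_{j-2})$ to appear). Concretely, for small $h$ the indices $j-2$ can fall outside $\{1,\dots,2h+1\}$, so the generic recursion is vacuous and only the boundary relations survive — this is why the excerpt defers details for $q=5$. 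The remaining parts (disjointness, surjectivity, injectivity on blocks, and hence bijectivity a.e.) are then routine once the seam combinatorics is pinned down, and the invariant-measure and ergodicity statements referenced later follow from the general theory of $\mathcal T$ on $\Omega$ exactly as in the even case treated in \cite{[BKS]}.
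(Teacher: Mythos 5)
This theorem is stated in the paper as a direct citation from \cite{[BKS]}, and no proof of it is given in the paper; so there is no in-paper argument against which to measure your sketch. Your general plan --- tracking fiber images under the second-coordinate M\"obius map $y \mapsto 1/(r\lambda + \varepsilon y)$, matching endpoints to obtain the recursions, deriving the quadratic $R^2 + (2-\lambda)R - 1 = 0$ at the positive-$t$ block, and verifying that the $J_j \times K_j$ are essentially disjoint and are permuted bijectively by $\mathcal T$ --- is the natural strategy and is consistent in outline with how \cite{[BKS]} proceeds.

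One concrete error, though, is your claim that the relations $(\mathcal R_1), (\mathcal R_2)$ with the $2\lambda$ denominator ``come from the pieces closest to $-\lambda/2$ where $r=2$.'' The digit at $t = -\lambda/2$ is in fact $r=1$: for $x < 0$ one has $r(x) = \lfloor 1/(\lambda |x|) + \tfrac12 \rfloor$, so at $|x| = \lambda/2$ this is $\lfloor 2/\lambda^2 + \tfrac12 \rfloor = 1$ whenever $2/\lambda^2 + \tfrac12 \in (1,2)$, which holds for all $\lambda > 2/\sqrt3$, hence for every $q \ge 4$. The $r=2$ cylinder is $t \in [-2/(3\lambda),\, -2/(5\lambda))$, near the paper's flushing line $-2/(3\lambda)$; and since $\phi_{2h} \le -2/(3\lambda) < \phi_h$, it is the high-indexed pieces $J_{2h}, J_{2h+1}$ that meet this cylinder, not the ones near the left endpoint. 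The indices appearing in $(\mathcal R_1),(\mathcal R_2)$ --- namely $L_1 = 1/(2\lambda - L_{2h})$ and $L_2 = 1/(2\lambda - L_{2h+1})$ --- encode exactly that the fibers $K_{2h}, K_{2h+1}$ are pushed through the $(\varepsilon, r) = (-1, 2)$ branch onto $K_1, K_2$, so the $2\lambda$ comes from the \emph{source} of the map into $J_1, J_2$, not from a digit $2$ attached to $J_1$ itself. To carry out the fiber-matching bookkeeping you describe, you would have to trace the full orbit of the $\phi_j$ (the ``double loop'' the paper refers to) and identify which $(\varepsilon, r)$ is constant on each source piece; the seam combinatorics is subtler than ``closest to $-\lambda/2$.'' Aside from this, your description of the generic $r=1$ recursion $L_j = 1/(\lambda - L_{j-2})$, the quadratic for $R$, and the degeneracy at $q = 3, 5$ (too few indices $j$ with $2 < j < 2h+2$) is consistent with the structure of the result.
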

\begin{figure}
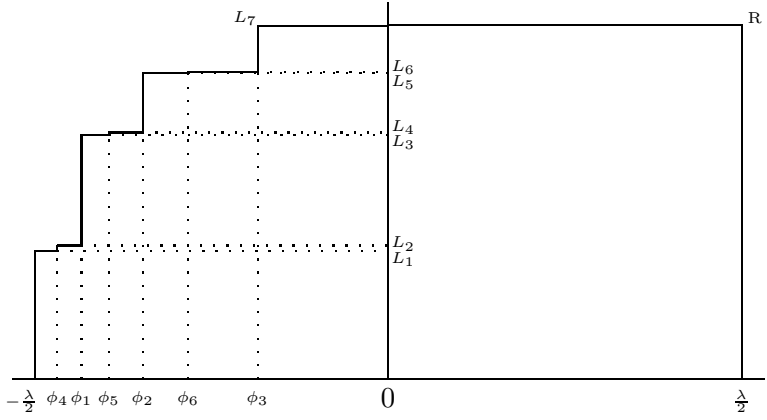

$$
\beginpicture
  \setcoordinatesystem units <0.5 cm, 0.5cm>
  \setplotarea x from -10 to 10, y from 0 to 10
  \putrule from -10 0 to 10 0
  \putrule from  0 0 to 0 10
  \putrule from  -9.397 0 to -9.397 3.4
  \putrule from  -9.397 3.4 to -8.794 3.4
  \putrule from  -8.794 3.4 to -8.794 3.545
  \putrule from  -8.794 3.545 to -8.152 3.545
  \putrule from  -8.152 3.545 to -8.152 6.497
  \putrule from  -8.152 6.497 to -7.422 6.497
  \putrule from  -7.422 6.497 to -7.422 6.556
  \putrule from  -7.422 6.556 to -6.527 6.556
  \putrule from  -6.527 6.556 to -6.527 8.131
  \putrule from  -6.527 8.131 to  -5.321 8.131
  \putrule from   -5.321 8.131 to -5.321 8.173
  \putrule from   -5.321 8.173 to -3.473 8.173
  \putrule from   -3.473 8.173 to -3.473 9.379
  \putrule from   -3.473 9.379 to 0 9.379
  \putrule from   0 9.415 to 9.397 9.415
  \putrule from   9.397 0 to 9.397 9.415
  \put{$0$} at 0 -0.5
\tiny
  \put{$\frac{\lambda}{2}$} at 9.397 -0.6
  \put{$-\frac{\lambda}{2}$} at -9.75 -0.6
  \put{$\phi_1$} at -8.152 -0.5
  \put{$\phi_2$} at -6.527 -0.5
  \put{$\phi_3$} at -3.473 -0.5
  \put{$\phi_4$} at -8.794 -0.5
  \put{$\phi_5$} at -7.422 -0.5
  \put{$\phi_6$} at -5.321 -0.5
  \put{$L_1$} at  0.4 3.2
  \put{$L_2$} at  0.4 3.6
  \put{$L_3$} at  0.4 6.3
  \put{$L_4$} at  0.4 6.7
  \put{$L_5$} at  0.4 7.9
  \put{$L_6$} at  0.4 8.3
  \put{$L_7$} at  -3.8 9.6
  \put{R} at 9.75 9.6
\normalsize
\setdots
    \putrule from  -8.152 0 to -8.152 3.545
    \putrule from  -6.527 0 to -6.527 6.556
    \putrule from  -3.473 0 to -3.473 8.173
    \putrule from  -8.794 0 to -8.794 3.4
    \putrule from  -7.422 0 to -7.422 6.496
    \putrule from  -5.321 0 to -5.321 8.131
    \putrule from  -8.152 3.545 to 0 3.545
    \putrule from  -6.527 6.556 to 0 6.556
    \putrule from  -3.473 8.173 to 0 8.173
    \putrule from  -8.794 3.4 to 0 3.4
    \putrule from  -7.422 6.496 to 0 6.496
    \putrule from  -5.321 8.131 to 0 8.131
    \endpicture
$$
\caption[$\Omega$ for $q=9$]{\label{fig: Omegaq9}The region of the
natural extension $\Omega_9$.}
\end{figure}

In~\cite{[BKS]} it is shown that $\mathcal{T}$ preserves the
probability measure $\nu$, with density
$\displaystyle{\frac{C_q}{(1+xy)^2}},$ where
$C_q~=~\displaystyle{\frac{1}{\log(1+R)}}$ is a normalizing
constant. It is also shown in~\cite{[BKS]} that the process
$(\Omega,\nu,\mathcal{T})$ is weak Bernoulli and therefore
ergodic. Proposition~\ref{prop: ergodic T even} also holds in the
odd case.

For odd indices $q$ we define again the region $\mathcal{D}$ by
$$
\mathcal{D}=\left\{(t,v) \in \Omega |
\min\left\{\frac{v}{1+tv},\frac{|t|}{1+tv}\right\}>\mathcal{H}_q
\right\} .
$$
Clearly $\min\{\Theta_{n-1},\Theta_n\}> \mathcal{H}_q$ if and only if $(t_n,v_n) \in \mathcal{D}$.
\begin{Lemma} Define functions $f$ and $g$ by
\label{lem: f g odd}
\begin{equation} \label{eq: f and g odd}
f(x)=\frac{\mathcal{H}_q}{1-\mathcal{H}_qx} \quad\textrm{and } g(x)=\frac{|x|-\mathcal{H}_q}{\mathcal{H}_qx}.
\end{equation}
For all odd $q\geq 7$, the region $\mathcal{D}$ consists of four disjoint regions:
\begin{itemize}
\item [$\mathcal{D}_1$]  bounded by the lines $x=\frac{-\lambda}{2}$ and $y=L_1$ and the graph of $f$;
\item [$\mathcal{D}_2$] bounded by the line $x=\phi_{h+1}$ and $y=L_2$ and the graph of $f$;
\item[$\mathcal{D}_3$]  (the largest region)  bounded from below by the graph of
$f$, from the right by the graph of $g$ and by the boundary of
$\Omega$; and,
\item[$\mathcal{D}_4$] bounded by the lines $x=\phi_h$ and $y = L_{2h+1}$ and the graph of $g$.
\end{itemize}
\end{Lemma}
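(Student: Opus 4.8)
The plan is to imitate the proof of Lemma~\ref{th: shape D}, with the constant $\mathcal{H}_q$ in place of $\frac12$ and with the more elaborate staircase $\Omega$ of Theorem~\ref{thm2}. First one rewrites the two defining inequalities. Since $1+tv>0$ and $v\ge 0$ throughout $\Omega$, and since $\mathcal{H}_q<\frac12$ makes $1-\mathcal{H}_q t>0$ there (indeed $\mathcal{H}_q\lambda/2<\frac12<1$), one gets $\frac{v}{1+tv}>\mathcal{H}_q\iff v>f(t)$, while $\frac{|t|}{1+tv}>\mathcal{H}_q$ amounts to $v<g(t)$ when $t\ge 0$ and to $v>g(t)$ when $t<0$, the sign of the denominator $\mathcal{H}_q t$ flipping the inequality. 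Hence
\[ \mathcal{D}=\{(t,v)\in\Omega:\ t\ge 0,\ f(t)<v<g(t)\}\ \cup\ \{(t,v)\in\Omega:\ t<0,\ v>\max(f(t),g(t))\}. \]

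Next I would dispose of the part with $t\ge 0$. On $t>0$ the graphs of $f$ and $g$ meet where $\mathcal{H}_q t^2-t+\mathcal{H}_q=0$; writing $\mathcal{H}_q=1/(2\sqrt{\mu^2+1})$ with $\mu=1-\lambda/2$, its smaller root is $t^{\ast}=\sqrt{\mu^2+1}-\mu$, and $t^{\ast}\ge 1-\mu=\lambda/2$ since $\sqrt{\mu^2+1}\ge1$. As $f>g$ on $[0,t^{\ast})$, the strip $f(t)<v<g(t)$ is empty on $[0,\lambda/2)$, so $\mathcal{D}\subseteq\{t<0\}$. For $t<0$ one checks directly that $f$ is increasing and bounded, running from $f(-\lambda/2)$ up to $\mathcal{H}_q$; that $g(t)=-1/\mathcal{H}_q-1/t$ is increasing, with $g(-\mathcal{H}_q)=0$ and $g(0^{-})=+\infty$; and that $f=g$ at a single point $t_0<0$ (the negative root of $\mathcal{H}_q t^2-t-\mathcal{H}_q=0$), with $f>g$ to its left and $g>f$ to its right. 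So the lower envelope $m(t):=\max(f(t),g(t))$ equals $f$ on $[-\lambda/2,t_0]$ and $g$ on $[t_0,0)$: bounded and slowly varying, then escaping to $+\infty$ near $0$.

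It remains to intersect $\{v>m(t)\}$ with $\Omega$, whose upper boundary over $t<0$ is $v=L_j$ on each $J_j$. The point of the relations $(\mathcal{R}_j)$ is that $L_1,L_2$ form a close low pair cut off by a large jump from the close middle pair, and symmetrically $L_{2h},L_{2h+1}$ form the close top pair; this pairing is exactly what yields four components instead of the two of the even case. Using $(\mathcal{R}_j)$ I would verify the orderings $\phi_0<-L_{2h+1}<\phi_{h+1}<-L_{2h}<\phi_1$ and $\phi_h<-L_1$, together with the identities $L_1(1+\mathcal{H}_q L_{2h+1})=\mathcal{H}_q$ and $L_2(1+\mathcal{H}_q L_{2h})=\mathcal{H}_q$ (both reduce, via $R+1/R=1/\mathcal{H}_q$, to $R/(R^2+1)=\mathcal{H}_q$). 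These say that the graph of $f$ meets the ceiling $L_1$ at $(-L_{2h+1},L_1)\in J_1$ (closing off $\mathcal{D}_1$ on the right) and meets $L_2$ at $(-L_{2h},L_2)\in J_2$ (closing off $\mathcal{D}_2$), the ceiling jumps at $\phi_{h+1}$ and $\phi_1$ reopening room each time; that from $\phi_1$ on, $f$ lies below every (now rapidly increasing) ceiling, so the single large region $\mathcal{D}_3$ runs on, bounded below first by the graph of $f$ and then, past $t_0$, by the graph of $g$; and that the graph of $g$ meets $L_{2h}$ at $(-L_2,L_{2h})\in J_{2h}$ (closing $\mathcal{D}_3$ on the right) and meets $L_{2h+1}$ at $(-L_1,L_{2h+1})\in J_{2h+1}$, the jump at $\phi_h$ having reopened room for the last sliver $\mathcal{D}_4$. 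Disjointness is then immediate, as the four pieces lie over essentially non-overlapping $t$-intervals; matching each against the description in the statement and against Figure~\ref{fig: Omegaq9} completes the proof.

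The main obstacle is precisely this last bookkeeping: checking, from the arithmetic of the $(\mathcal{R}_j)$, that the graphs of $f$ and $g$ pierce the staircase in exactly the steps claimed, with no further crossings, so that $\mathcal{D}$ has exactly four components --- this plays the role that the single inequality $\phi_0\le-L_{p-1}\le\phi_1$ plays in the even case. It is here too that $q=5$ must be set aside: then $h=1$, there are only $2h+2=4$ strips, $J_{2h}=J_2$ and $J_{2h+1}=J_3$, so the ``low'' and ``top'' steps collapse together and the four-region picture degenerates; that case is handled in the remark following this lemma.
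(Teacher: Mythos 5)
The paper states this lemma without proof (it is presumably considered analogous to Lemma~\ref{th: shape D}), so there is no in-paper argument to compare against. Your proposal follows the natural route, and the preliminary work is correct: rewriting the two inequalities as $v>f(t)$ and, for $t<0$, $v>g(t)$; the quadratic $\mathcal H_q t^2-t+\mathcal H_q=0$ with smaller root $t^*=\sqrt{\mu^2+1}-\mu>\lambda/2$, disposing of $t\ge 0$; the shape of the envelope $\max(f,g)$ on $t<0$; and the reduction of the identities $L_1(1+\mathcal H_qL_{2h+1})=\mathcal H_q$ and $L_2(1+\mathcal H_qL_{2h})=\mathcal H_q$ to $R/(R^2+1)=\mathcal H_q$, which indeed follows from $R^2+(2-\lambda)R-1=0$ via $R+1/R=2\sqrt{(1-\lambda/2)^2+1}$, using $L_{2h+1}=\lambda-R$, $L_{2h}=\lambda-1/R$, $L_1=R/(\lambda R+1)$, $L_2=1/(\lambda+R)$.

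The gap is precisely the ``bookkeeping'' you defer, and as written it is both unverified and incomplete. You correctly list $\phi_0<-L_{2h+1}<\phi_{h+1}<-L_{2h}<\phi_1$ and $\phi_h<-L_1$, but the separation of $\mathcal D_3$ from $\mathcal D_4$ also needs $\phi_{2h}\le -L_2<\phi_h$ (you need $(-L_2,L_{2h})\in J_{2h}$ and a genuine gap between $-L_2$ and $\phi_h$), and none of these order relations is actually derived from the $(\mathcal R_j)$. More seriously, to conclude there are \emph{exactly} four components you must rule out any further piercing of the staircase by the envelope on the intermediate steps $J_3,\dots,J_{2h-1}$. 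For $f$ this is cheap, since $f<\mathcal H_q<\frac12<L_3\le\cdots\le L_{2h-1}$; but for $g$ it requires an argument: $g$ is negative for $t<-\mathcal H_q$ and grows without bound as $t\to 0^-$, so one must check it does not overtake the (also increasing) ceiling until $J_{2h}$. Without these checks the argument is an outline, not a proof. Finally, the heuristic remark that $L_{2h},L_{2h+1}$ form ``the close top pair'' is off: from $(\mathcal R_j)$ and the figure for $q=9$, the close pairs are $(L_1,L_2),(L_3,L_4),\dots,(L_{2h+1},R)$, so $L_{2h}$ and $L_{2h+1}$ are separated by a large jump; this does not affect the computations you use, but it does misdescribe why the even and odd cases differ.
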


\begin{Remark}\label{rmkFiveConn} In the exceptional case where $q = 5$, one
finds that $\mathcal D$ consists of a single connected component.
\end{Remark}

\begin{figure}
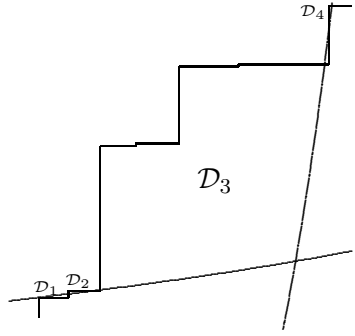

$$
\beginpicture
  \setcoordinatesystem units <0.65 cm, 0.65 cm>

  \putrule from  -9.397 3 to -9.397 3.4
  \putrule from  -9.397 3.4 to -8.794 3.4
  \putrule from  -8.794 3.4 to -8.794 3.545
  \putrule from  -8.794 3.545 to -8.152 3.545
  \putrule from  -8.152 3.545 to -8.152 6.497
  \putrule from  -8.152 6.497 to -7.422 6.497
  \putrule from  -7.422 6.497 to -7.422 6.556
  \putrule from  -7.422 6.556 to -6.527 6.556
  \putrule from  -6.527 6.556 to -6.527 8.131
  \putrule from  -6.527 8.131 to  -5.321 8.131
  \putrule from   -5.321 8.131 to -5.321 8.173
  \putrule from   -5.321 8.173 to -3.473 8.173
  \putrule from   -3.473 8.173 to -3.473 9.379
  \putrule from   -3.473 9.379 to -3 9.379

  \setquadratic \plot
-10 3.337368311
-9.5    3.39400351
-9  3.452594092
-8.5    3.513243104
-8  3.576060962
-7.5    3.641166122
-7  3.708685828
-6.5    3.778756939
-6  3.851526858
-5.5    3.927154565
-5  4.005811775
-4.5    4.087684243
-4  4.172973228
-3.5    4.261897141
-3  4.354693418
/

\setquadratic \plot -4.4    2.76354362
-4.3    3.292084846
-4.2    3.845794702
-4.1    4.426514795
-4  5.036270892
-3.9    5.677296533
-3.8    6.352060366
-3.7    7.063297919
-3.6    7.81404867
-3.5    8.607699464
-3.4    9.448035598
 /

\put{\tiny $\mathcal{D}_1$} at -9.25 3.6
\put{\tiny $\mathcal{D}_2$} at -8.6 3.75
\put{$\mathcal{D}_3$} at -5.8 5.8
\put{\tiny $\mathcal{D}_4$} at -3.8 9.25
\endpicture
$$
\caption[$\mathcal{D}$ for $q=9$]{\label{fig: Dq9} The region
$\mathcal{D}$ for $q=9$.}
\end{figure}

We begin our study of $\Theta_{n+1}$ by decomposing $\mathcal{D}$
into regions where  $r_{n+1},\varepsilon_{n+1}$ and
$\varepsilon_{n+2}$ are constant; see Table
~\ref{constantSubregionsOddCase} for the definition of the new
subregions involved in this.  See also Figure~\ref{fig:
regionsq9}.    Again, due to the dynamics of the situation, we
will need only focus on the $\mathcal T$-orbit of $\mathcal A$.

\begin{table}
\begin{displaymath}
\begin{array}{lrl|c|c|c|c}
\multicolumn{3}{c|}{\textrm{Region}} & \kern1pt r_{n+1} &\kern1pt  \varepsilon_{n+1} &\kern1pt \varepsilon_{n+2} &\Theta_{n+1}\\
\hline
&&&&&\\
\mathcal A \cup \mathcal D_{4}:&\displaystyle{\frac{-2}{3\lambda}} \leq t_n \leq &\!\!\!\!\frac{-1}{\lambda
+\frac{1}{R}} & 2 & -1 & -1 &
\kern-3pt \displaystyle{\frac{(1\!+\!2t_n\lambda)(v_n\!-\!2\lambda)}{1+t_nv_n}}\\
&&&&&\\
\mathcal B:& \displaystyle{\frac{-1}{\lambda}} \leq t_n < &\!\!\!\! \displaystyle{\frac{-2}{3\lambda}} & 1 & -1 & 1 &
 \displaystyle{\frac{(1+t_n\lambda)(\lambda-v_n)}{1+t_nv_n}}\\
&&&&&\\
\mathcal C \cup \mathcal D_{1} \cup \mathcal D_{2}:& \frac{-\lambda}{2} \leq t_n <&\!\!\!\!\displaystyle{\frac{-1}{\lambda}} & 1 & -1 & -1 &
\displaystyle{\frac{(1+t_n\lambda)(v_n-\lambda)}{1+t_nv_n}}\\
\end{array}
\end{displaymath}
\caption{Regions of constant coefficients, the odd case.}
\label{constantSubregionsOddCase}
\end{table}

\begin{figure}[h!t]
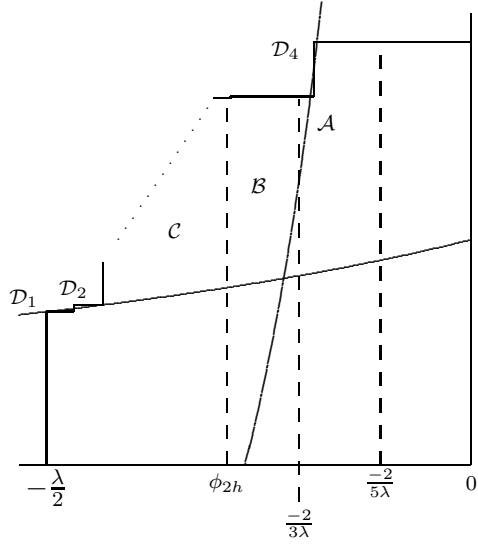

$$
\beginpicture
  \setcoordinatesystem units <0.6 cm, 0.6cm>
  \setplotarea x from -10 to 1, y from 0 to 10
  \putrule from -10 0 to 0 0
  \putrule from  0 0 to 0 10
  \putrule from  -9.397 0 to -9.397 3.4
  \putrule from  -9.397 3.4 to -8.794 3.4
  \putrule from  -8.794 3.4 to -8.794 3.545
  \putrule from  -8.794 3.545 to -8.152 3.545
  \putrule from  -8.152 3.545 to -8.152 4.497
  \putrule from  -5.7 8.131 to  -5.321 8.131
  \putrule from   -5.321 8.131 to -5.321 8.173
  \putrule from   -5.321 8.173 to -3.473 8.173
  \putrule from   -3.473 8.173 to -3.473 9.379
  \putrule from   -3.473 9.379 to 0 9.379
  \put{$-\frac{\lambda}{2}$} at -9.397 -0.5

\setquadratic \plot -10 3,329300796 -9  3,443960604 -8  3,566799784 -7  3,698725932 -6
3,840786083 -5  3,994194552 -4  4,160367733 -3  4,340967963 -2
4,537959324 -1  4,753679359 0 4,990932277
/

 \setquadratic \plot -4.9
0.371826474 -4.7    1.240258954 -4.5    2.185885431 -4.3
3.219477163 -4.1    4.353907112 -3.9    5.60468885 -3.7
6.990690236 -3.5    8.535091781 -3.3    10.26669351 /
\setlinear \plot -5 0
 -4.9   0.408
/

\put{\footnotesize $\mathcal D_{1}$} at -9.9 3.7
\put{\footnotesize $\mathcal D_{2}$} at -8.8 3.9
\put{\footnotesize$\mathcal C$} at -6.55 5.2
\put{\footnotesize $\mathcal B$} at -4.7 6.2
\put{\footnotesize $\mathcal A$} at -3.2 7.6
\put{\footnotesize $\mathcal D_{4}$} at -4.1 9.2
\setdots

\setlinear \plot -7.8 5.0 -5.7 8.1 /

\setdashes
\putrule from  -5.4 0 to -5.4 8.2
\put{\footnotesize
$\phi_{2h}$} at -5.4 -0.4

\putrule from -3.8 -0.8 to -3.8 8.1
\put{\footnotesize$\frac{-2}{3\lambda}$} at -3.8 -1.3

\putrule from -2 0 to -2 9.4
\put{\footnotesize$\frac{-2}{5\lambda}$} at -2 -0.4

\put {\footnotesize$0$ \normalsize} at 0.1 -0.4

\endpicture
$$
\caption[Regions in $\mathcal{D}$ for $q=9$]{\label{fig:
regionsq9} The regions for $\Theta_{n+1}$ on $\mathcal{D}$.}
\end{figure}

On  $\mathcal A \cup \mathcal D_{4}$,  one easily finds that
$\Theta_{n+1}>\Theta_{n-1}>\Theta_{n}$  holds.   Again flushing occurs to the left of the line
$t=\frac{-2}{3\lambda}$; note that $\phi_{2h}\leq \frac{-2}{3\lambda}
<\phi_{h}$. Thus we study  the interval $\left[\phi_{2h},
\phi_{h}\right)$ (instead of the interval $\left[\phi_{h},
0\right)$, as in the even case).    Note that the only one of our regions in the strip
defined by the interval $\phi_{2h}\leq t\leq \phi_{h}$ is indeed $\mathcal A$.

One easily shows the following result.
\begin{Lemma}
\label{lem: T A D}
The transformation $\mathcal{T}$ maps
$\mathcal{A}$ bijectively to region $\mathcal D_{1}$.
\end{Lemma}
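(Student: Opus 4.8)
The plan is to run the same argument that established the analogous statement in the even case. By Table~\ref{constantSubregionsOddCase}, the digits are constant on all of $\mathcal A$: there $r_{n+1}=2$ and $\varepsilon_{n+1}=\varepsilon_{n+2}=-1$. Hence, by Definition~\ref{natural extension even}, the restriction of $\mathcal T$ to $\mathcal A$ is
\[
\mathcal T(t,v)=\Bigl(\,-\frac1t-2\lambda,\ \frac1{2\lambda-v}\,\Bigr),
\]
the composition of a M\"obius transformation in the variable $t$ with one in the variable $v$. Such a map is a homeomorphism onto its image; it carries vertical lines to vertical lines, horizontal lines to horizontal lines, and the graph of $g$ to some curve. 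It therefore suffices to check that $\mathcal T$ sends the boundary of $\mathcal A$ onto the boundary of $\mathcal D_1$ arc by arc. Here one first checks (see below) that $\mathcal A$ is the curvilinear triangle bounded on the left by $t=\frac{-2}{3\lambda}$, on top by the part of $\partial\Omega$ at height $L_{2h}$, and below by the graph of $g$; recall that $\mathcal D_1$ is the curvilinear triangle bounded by $t=-\frac\lambda2$, by $v=L_1$, and by the graph of $f$ (Lemma~\ref{lem: f g odd}).

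The two straight edges are handled by inspection: the left edge $t=\frac{-2}{3\lambda}$ is sent to $t=-\frac\lambda2$, and the top edge $v=L_{2h}$ is sent to $v=\frac1{2\lambda-L_{2h}}$, which equals $L_1$ by relation $(\mathcal R_1)$ of Theorem~\ref{thm2}. For the curved edge I would substitute the explicit formulas $g(t)=\frac{|t|-\mathcal H_q}{\mathcal H_q t}$ and $f(t)=\frac{\mathcal H_q}{1-\mathcal H_q t}$ of Lemma~\ref{lem: f g odd} and verify, for $t<0$, the identity
\[
\frac1{2\lambda-g(t)}=\frac{\mathcal H_q\,t}{\,t+\mathcal H_q+2\lambda\mathcal H_q\,t\,}=f\!\Bigl(-\frac1t-2\lambda\Bigr),
\]
so that $\mathcal T$ takes the graph of $g$ onto the graph of $f$. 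These three facts match the three boundary arcs, and hence the three vertices, of $\mathcal A$ with those of $\mathcal D_1$; since $\mathcal T$ is a continuous injection on $\mathcal A$ carrying its boundary onto that of $\mathcal D_1$, it maps $\mathcal A$ bijectively onto $\mathcal D_1$.

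The point requiring care is the description of $\mathcal A$ itself. In the odd case the strip $\frac{-2}{3\lambda}\le t\le\frac{-1}{\lambda+1/R}$ carrying $\mathcal A\cup\mathcal D_4$ straddles $\phi_h$, and the top of $\Omega$ over it jumps from the level $L_{2h}$ to the level $L_{2h+1}$ at $t=\phi_h$. One must verify that the graph of $g$ already meets the line $v=L_{2h}$ at some argument $t<\phi_h$, so that $\mathcal A$ really is the curvilinear triangle above and $\mathcal D_4$ occupies the rest of the strip — equivalently, that $\mathcal A$ is the only one of the regions present over $[\phi_{2h},\phi_h)$, as noted above. (A companion check, that $\frac{-1}{\lambda+1/R}=g^{-1}(L_{2h+1})$, follows from relation $(\mathcal R_0)$ of Theorem~\ref{thm2} together with the identity $1/\mathcal H_q=R+1/R$ implied by $R^2+(2-\lambda)R-1=0$.) Once the shape of $\mathcal A$ is pinned down, the vertex computations and the curve identity above are routine; the exceptional value $q=5$, for which $\mathcal D$ is connected, is treated separately as usual.
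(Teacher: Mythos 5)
Your proof is correct, and it follows the same outline as the paper's proof of the analogous even-case lemma (the paper itself gives no proof in the odd case, simply asserting ``one easily shows''): reduce $\mathcal T$ on $\mathcal A$ to its explicit Möbius form, observe that it preserves horizontal and vertical lines, map the three boundary arcs (the verticals $t=\tfrac{-2}{3\lambda}\mapsto t=-\tfrac\lambda2$, the horizontals $v=L_{2h}\mapsto v=L_1$ via $(\mathcal R_1)$, and the graph of $g$ onto the graph of $f$), and conclude by continuity and injectivity. Your explicit curve identity checks out, as do the vertex images; the one point you flag but defer --- that $g$ reaches the level $L_{2h}$ at some $t<\phi_h$, so that $\mathcal A$ is a clean curvilinear triangle capped at height $L_{2h}$ --- is exactly the assertion the paper itself makes in the paragraph preceding the lemma (``the only one of our regions in the strip $\phi_{2h}\le t\le\phi_h$ is indeed $\mathcal A$''), so relying on it is consistent with the paper.
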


\begin{Remark}\label{rmkFiveImA}   In the exceptional case where $q = 5$,
one can easily see that $\mathcal A$ can be defined as in general,
and has a similar image. The rest of our arguments can be
appropriately adjusted so that the results announced below  go
through for $q=5$.
\end{Remark}

The transformations of $\mathcal{A}$ follow a more complex orbit
than in the even case: here, the regions make a ``double loop,''
related to the orbit of the $\phi_{j}$. Region $\mathcal D_{1}$
gets transformed into a region with upper right vertex
$(-L_{2h-1},L_3)$, this region gets transformed in a region with
upper right vertex $(-L_{2h-3},L_5)$ and so on until we reach a
region with upper right corner $(-L_1,L_{2h+1})$, which lies in
region $\mathcal D_{4}$. This region gets transformed into a
region with upper right vertex $(-L_{2h},L_2)$.  Thereafter we
find a region with upper right corner $(-L_{2h-2},L_4)$ and so on,
until finally the image intersects with $\mathcal{A}$ after $2h+1$
steps. Here, we call a \emph{round} these $2h+1$ transformations
of $\mathcal{T}$.
\begin{Theorem}
\label{th: flushing odd}  Let the constants $\tau_{k}$ be defined
by
$$
\tau_k =
\left[\left(-1:2,(-1:1)^{h},-1:2,\left(-1:1\right)^{h-1}\right)^k,
\left(\frac{-2}{3\lambda}:\right)\right] .
$$
Then for any $(t,v)\in \mathcal{A}$ such that $\tau_{k-1} \leq t <
\tau_k$,
 the point $(t_n,v_n)$ is flushed after $k$ rounds.
In particular, for any $x\in \mathbb R$ with $\mathcal T^{n}(x,0)
= (t_{n}, v_{n})$ satisfying $\tau_{k-1} \leq t_n < \tau_k$ one
has
$$
\min\{\Theta_{n-1},\Theta_{n},\ldots,\Theta_{n+k(2h+1)},\Theta_{n+k(2h+1)}\}>
\mathcal{H}_q,
$$
while
$$
\min\{\Theta_{n-1},\Theta_{n},\ldots,\Theta_{n+k(2h+1)},\Theta_{n+k(2h+1)+1}\}<
\mathcal{H}_q.
$$
\end{Theorem}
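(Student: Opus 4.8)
The plan is to run the argument from the proof of Theorem~\ref{th: flushing} (the even case), with a round now consisting of $2h+1$ applications of $\mathcal T$ in place of $p-1$. First I would record the reduction: flushing happens exactly when the first coordinate of an iterate crosses $\frac{-2}{3\lambda}$, and since $\phi_{2h}\le\frac{-2}{3\lambda}<\phi_h$, a point $(t,v)\in\mathcal A$ is flushed after $k$ rounds precisely when $k$ is least with $\mathcal T^{k(2h+1)}(t,v)$ having first coordinate strictly below $\frac{-2}{3\lambda}$. Two structural facts established just above the statement go into this: within the strip $\phi_{2h}\le t\le\phi_h$ the only one of our subregions is $\mathcal A$, and the $\mathcal T$-orbit of $\mathcal A$ performs the ``double loop'' --- through $\mathcal D_1$, then the regions with successive upper-right vertices $(-L_{2h-1},L_3),(-L_{2h-3},L_5),\dots,(-L_1,L_{2h+1})$ ending in $\mathcal D_4$, then $(-L_{2h},L_2),(-L_{2h-2},L_4),\dots$ --- returning to meet $\mathcal A$ after exactly $2h+1$ steps.

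Since $\mathcal T$ acts on the first coordinate as $f_q$, I would then translate the condition into one about $f_q$-orbits and compute the pre-images of $\frac{-2}{3\lambda}$ along the sequence of inverse branches realizing a round. Reading the constant digit off each region from Table~\ref{constantSubregionsOddCase} --- ``$-1:2$'' while the point lies in the $\mathcal A\cup\mathcal D_4$ strip, ``$-1:1$'' on the intermediate regions, which all lie in the $\mathcal C\cup\mathcal D_1\cup\mathcal D_2$ strip --- a round is encoded by the length-$(2h+1)$ word
\[
W=\left(-1:2,\,(-1:1)^{h},\,-1:2,\,(-1:1)^{h-1}\right),
\]
so prepending $W$ to a Rosen tail is exactly $f_q^{-(2h+1)}$ along this branch and
\[
f_q^{-k(2h+1)}\!\left(\tfrac{-2}{3\lambda}\right)=\left[\,W^{k},\,\left(\tfrac{-2}{3\lambda}:\right)\,\right]=\tau_k .
\]
Each inverse branch $y\mapsto\varepsilon_1/(r_1\lambda+y)$ is monotone, so the $\tau_k$ increase and $[\tau_{k-1},\tau_k)$ is precisely the set of $t$-coordinates in $\mathcal A$ that are flushed after $k$ rounds. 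The two displayed $\Theta$-inequalities then follow: throughout the $k$ rounds the orbit stays in $\mathcal D$, where $\min\{\Theta_{j-1},\Theta_j\}>\mathcal H_q$, and at the $\mathcal A\cup\mathcal D_4$ positions of the orbit one moreover has $\Theta_{j+1}>\Theta_{j-1}>\Theta_j$; after $k$ rounds the orbit lies in $\mathcal B$, and one further application of $\mathcal T$ sends it to a point with positive first coordinate, where --- since for $t>0$ one cannot have $\min\{v/(1+tv),|t|/(1+tv)\}>\mathcal H_q$ --- the corresponding approximation coefficient drops below $\mathcal H_q$.

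The main obstacle is the verification that makes the odd case harder than the even one: checking step by step that $\mathcal T^{j}(\mathcal A)$ for $0\le j\le 2h$ lands in exactly the region claimed, so that the digit it contributes is the one appearing in $W$. This is where the interleaving of the even- and odd-indexed $J_j$ and $L_j$, together with the relations $(\mathcal R_j)$ of Theorem~\ref{thm2}, must be used with care; it amounts to iterating Lemma~\ref{lem: T A D} and the vertex computations preceding the statement, so I would quote those and then only record the index bookkeeping (in particular, that every intermediate region sits in the $\mathcal C\cup\mathcal D_1\cup\mathcal D_2$ strip and hence contributes a ``$-1:1$''). The small cases need separate attention: $q=5$ as in Remark~\ref{rmkFiveImA}, and the base case $q=7$ ($h=2$, $W=(-1:2,-1:1,-1:1,-1:2,-1:1)$) by direct inspection.
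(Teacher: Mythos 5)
Your proposal is correct and mirrors the paper's own approach: the paper gives no formal proof of Theorem~\ref{th: flushing odd}, relying instead on the ``double loop'' discussion immediately preceding it and on the explicit preimage computation $f_q^{-k(p-1)}(-2/3\lambda)=\tau_k$ carried out in the proof of the even-case Theorem~\ref{th: flushing}, which is exactly what you elaborate for $2h+1$ steps. Your reading of the digit word $W=\left(-1:2,(-1:1)^{h},-1:2,(-1:1)^{h-1}\right)$ from Table~\ref{constantSubregionsOddCase} via the double loop, the monotone-inverse-branch argument for the ordering of the $\tau_k$, and the translation into the two displayed $\Theta$-inequalities all agree with what the paper leaves implicit, and you correctly flag the step-by-step strip bookkeeping and the $q=5$ exception as the only places needing separate care.
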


\subsection{Metric results}
As in the even case, we define $\mathcal{A}_k= \left\{ (t,v) \in
\mathcal{A}\, |\, t>\tau_k  \right\}$ for $k\geq 0$.
\begin{Theorem}
\label{th: limit odd} For almost all $x$ (with respect to the Lebesgue measure) and $k \geq 1$ the limit
\begin{eqnarray*} && \lim_{N \rightarrow \infty} \frac{1}{N} \# \left\{  1 \leq j \leq N \, | \, \min
\{\Theta_{j-1}, \Theta_j, \ldots, \Theta_{j+k(2h+1)} \} > \frac12 \right.\\
&& \left. {\phantom{XXXXXXXXXXXX}}\textrm{and }
\Theta_{j+k(2h+1)+1} < \frac12 \right\}
\end{eqnarray*}
exists and equals $\nu(\mathcal{A}_{k-1}\setminus \mathcal{A}_{k}) = \nu(\mathcal{A}_{k-1})-\nu(\mathcal{A}_{k})$.
\end{Theorem}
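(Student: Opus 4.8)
The plan is to obtain Theorem~\ref{th: limit odd} exactly as in the even case, by combining the combinatorial description of flushing (Theorem~\ref{th: flushing odd}) with the ergodicity of $\mathcal{T}$. First I would observe that, by Theorem~\ref{th: flushing odd}, for a point whose $\mathcal{T}$-orbit passes through $\mathcal{A}$, the event that the orbit has a block of large approximation coefficients of precisely the prescribed length is governed by the location of the relevant $t$-coordinate relative to the cut-points $\tau_{k-1}$ and $\tau_k$. Concretely, the orbit of $(x,0)$ at index $j$ contributes to the counting set exactly when $(t_{j'}, v_{j'})$ lands, for the appropriate reindexing $j'$, in $\mathcal{A}_{k-1}\setminus \mathcal{A}_{k} = \{(t,v)\in\mathcal{A} \,|\, \tau_{k-1}\le t < \tau_k\}$: the condition $t \ge \tau_{k-1}$ forces the orbit through $\mathcal{A}$ to survive $k-1$ full rounds, giving the long block $\min\{\Theta_{j-1},\ldots,\Theta_{j+k(2h+1)}\} > \mathcal{H}_q = \tfrac12$, while $t < \tau_k$ forces flushing on the $k$-th round, giving $\Theta_{j+k(2h+1)+1} < \tfrac12$.

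The second step is to pass from this orbit description to the asymptotic frequency. Since $(\Omega,\nu,\mathcal{T})$ is ergodic (stated in the excerpt, immediately after Theorem~\ref{thm2}, and Proposition~\ref{prop: ergodic T even} holds in the odd case as well), the Birkhoff ergodic theorem applied to the indicator function $\mathbf{1}_{\mathcal{A}_{k-1}\setminus\mathcal{A}_k}$ gives, for $\nu$-almost every point and hence for Lebesgue-almost every $x$,
$$
\lim_{N\to\infty}\frac{1}{N}\,\#\{1\le j\le N \,|\, \mathcal{T}^j(x,0)\in \mathcal{A}_{k-1}\setminus\mathcal{A}_k\} = \nu(\mathcal{A}_{k-1}\setminus\mathcal{A}_k).
$$
One then checks that the index set appearing in the statement of the theorem and the index set $\{j \,|\, \mathcal{T}^j(x,0)\in\mathcal{A}_{k-1}\setminus\mathcal{A}_k\}$ differ only by a bounded shift (by $k(2h+1)$, arising from aligning the start of the block $\Theta_{j-1}$ with the visit to $\mathcal{A}$), which does not affect the Cesàro limit; the disjointness $\mathcal{A}_k\subset\mathcal{A}_{k-1}$ then yields $\nu(\mathcal{A}_{k-1}\setminus\mathcal{A}_k) = \nu(\mathcal{A}_{k-1}) - \nu(\mathcal{A}_k)$.

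The main obstacle, as in the even case, is not the ergodic-theoretic part but the bookkeeping: one must argue that \emph{every} sufficiently long run of $\Theta_i > \tfrac12$ is accounted for by a unique visit of the orbit to $\mathcal{A}$ (so that the counting is not double-counting or missing blocks). This requires the structural facts established before Theorem~\ref{th: flushing odd} — that on $\mathcal{D}_3$, $\mathcal{C}$, $\mathcal{B}$ the orbit is eventually flushed or forced into $\mathcal{A}$, that $\mathcal{T}$ maps $\mathcal{A}$ into $\mathcal{D}_1$ and a round returns one into $\mathcal{A}$, and (via Lemma~\ref{lem:FixedAndFlushedEven}, whose odd analogue holds) that for $G_q$-irrational $x$ the orbit is never trapped in $\mathcal{D}$ forever. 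Granting these, the proof is a routine transcription of the even-case argument; I would therefore state it briefly and refer to the proof of Theorem~\ref{th: limit even}, noting only that $p-1$ is replaced by $2h+1$ and that the $\tau_k$ are those of Theorem~\ref{th: flushing odd}.
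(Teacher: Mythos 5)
Your proposal follows exactly the route the paper takes: Theorem~\ref{th: flushing odd} identifies the indices $j$ in question with visits of the orbit $\mathcal{T}^j(x,0)$ to $\mathcal{A}_{k-1}\setminus\mathcal{A}_k$, and the Birkhoff ergodic theorem applied to the indicator of that set (using the ergodicity of $(\Omega,\nu,\mathcal{T})$, Proposition~\ref{prop: ergodic T even} extended to the odd case) yields the asserted limit; the paper itself gives no more detail than this, simply citing the flushing theorem and ergodicity as it does for Theorem~\ref{th: limit even}.

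One small slip worth flagging: you write ``$\mathcal{H}_q = \tfrac12$,'' but for odd $q$ one has $\mathcal{H}_q = \tfrac{1}{2\sqrt{(1-\lambda/2)^2+1}} < \tfrac12$. The threshold appearing in Theorem~\ref{th: flushing odd} and in the definition of $\mathcal{D}$ for odd $q$ is $\mathcal{H}_q$, not $\tfrac12$; the occurrence of $\tfrac12$ in the statement of Theorem~\ref{th: limit odd} (and in the example that follows it) looks like a transcription error carried over from the even case, and the argument you give only works with $\mathcal{H}_q$ throughout. Also, the ``bounded shift'' hedge is unnecessary: the index $j$ in the counting set coincides with the index $n$ at which the orbit enters $\mathcal{A}_{k-1}\setminus\mathcal{A}_k$, so no reindexing is needed (though, as you note, a bounded shift would not change the Ces\`aro density in any case).
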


We compute again $\nu(\mathcal{A})$ and $\nu(\mathcal{A}_k)$. A
calculation similar to that in the even case yields
\begin{eqnarray*}
\nu(\mathcal{A}) &=& D\left(\log \left(R+\frac{1}{R}\right) -
\log\left(\frac{\lambda R+2}{2R}\right) +\frac{\lambda
C-2CR}{2}\right) \label{eq: measure A1},\\
\nu(\mathcal{A}_k) &=& D\left(\log \left(R+\frac{1}{R}\right) -
\log\left|\frac{1+\tau_k(\lambda -1/R)}{\tau_k}\right|
-C(\lambda+R)-\frac{C}{\tau_k}\right) \nonumber \label{eq:
measure Ak odd}.
\end{eqnarray*}

\begin{Example} If $q=9$ we have $\nu(\mathcal{A}) = 6.2 \cdot 10^{-7},
 \, \nu(\mathcal{A}_1) = 6.5 \cdot 10^{-13},$ and $\nu(\mathcal{A}_2) = 6.8 \cdot 10^{-19}$.

So we find that for almost every $x$ about $6.5\cdot 10^{-11}\,\%$ of the blocks of
consecutive approximation coefficients of length $10$ have the
property that
\[\min \{\Theta_{j-1}, \Theta_j, \ldots, \Theta_{j+7} \} > \frac12
 \quad \textrm{and } \Theta_{j+8} < \frac12,\]
while about $6.8 \cdot 10^{-17}\,\%$ of the blocks of length $17$ have
the property that $$ \min \{\Theta_{j-1}, \Theta_j, \ldots,
\Theta_{j+14} \} > \frac12 \quad \textrm{and } \Theta_{j+15} <
\frac12.
$$
\end{Example}

\subsection{Tong's spectrum for odd $q$}
We have the following result, which is proved similarly to Theorem~\ref{th: tong even}.

\begin{Theorem}
For every $G_q$-irrational number $x$ and all positive $n$ and $k$,  one has
\[ \min \{ \Theta_{n-1},\Theta_n,\ldots,\Theta_{n+k(2h+1)}\} < \frac{-\tau_{k-1}}{1+(\lambda-R)\tau_{k-1}}.\]
The constant $c_{k-1}=\frac{-\tau_{k-1}}{1+(\lambda-R)\tau_{k-1}}$ is best possible.
\end{Theorem}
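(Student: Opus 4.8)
The plan is to mimic the proof of Theorem~\ref{th: tong even} step by step, using the odd-case analogues that have already been assembled: Lemma~\ref{lem: f g odd}, Lemma~\ref{lem: T A D}, Theorem~\ref{th: flushing odd}, and the formulas in Table~\ref{constantSubregionsOddCase}. The strategy is to reduce everything to a single extremal problem on the region $\mathcal{A}$, exactly as before. First, if $(t_n,v_n)\notin\mathcal D$ we are immediately done, since then $\min\{\Theta_{n-1},\Theta_n\}<\mathcal H_q\le c_{k-1}$. Otherwise, following the dynamics described just before Lemma~\ref{lem: T A D} and in the paragraph on ``rounds,'' within at most $2h+1$ steps the orbit either gets flushed to $\Omega_+$ (whence some $\Theta_{n+i}<\mathcal H_q$ and we are done) or lands in $\mathcal A$. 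So we may assume $(t,v)=(t_{n+i},v_{n+i})\in\mathcal A$ for some $0\le i\le 2h+1$. On $\mathcal A\cup\mathcal D_4$ we have $\Theta_{n+1}>\Theta_{n-1}>\Theta_n$, so the minimum of the relevant block equals $\Theta_n=\frac{-t}{1+tv}$ evaluated at $(t,v)$.

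Next I would carry out the maximization of $m(t,v)=\frac{-t}{1+tv}$ over the relevant piece of $\mathcal A$. As in the even case, $\partial m/\partial t<0$ and $\partial m/\partial v>0$, so the maximum occurs at the upper-left corner of the subregion of $\mathcal A$ in which $(t,v)$ sits. For $k=1$ this corner is $(\tau_0,v_{\max})=\bigl(\tfrac{-2}{3\lambda},\cdot\bigr)$ and the top boundary of $\mathcal A$ there has height $L_{2h+1}=\lambda-R$ (this is where the odd case differs: the relevant height is $\lambda-R$ rather than $\lambda-1$, which is exactly what produces the constant $\frac{-\tau_{k-1}}{1+(\lambda-R)\tau_{k-1}}$). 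For general $k$, if the starting point survived $k-1$ rounds then by Theorem~\ref{th: flushing odd} we have $t\ge\tau_{k-1}$, and the maximum of $\Theta_n$ over the surviving part of $\mathcal A$ occurs at $(\tau_{k-1},\lambda-R)$, giving the value $c_{k-1}=\frac{-\tau_{k-1}}{1+(\lambda-R)\tau_{k-1}}$. This proves the inequality.

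For sharpness, I would again track the forward and backward $\mathcal T$-orbit of the extremal corner point $(\tau_{k-1},\lambda-R)$ — equivalently, the point with first coordinate $x_*=\lim_k\tau_k=[\overline{(-1:2,(-1:1)^h,-1:2,(-1:1)^{h-1})}]$ — and check that at each of the $2h+1$ neighboring points in the orbit both $\Theta_{n-1}$ and $\Theta_n$ stay $\ge c_{k-1}$. As in the even case this comes down to comparing the curves $\Theta_{n-1}=c_{k-1}$ and $\Theta_n=c_{k-1}$ (the odd-case analogues of $f_1,g_1$) against the heights $L_2,L_4,\dots,L_{2h},L_3,\dots,L_{2h+1}$ appearing along the double loop described before Theorem~\ref{th: flushing odd}, together with the monotonicity $\partial\Theta_{n-1}/\partial t_n>0$ on region $\mathcal C$; the double-loop structure means one must handle the two interleaved increasing chains of the $L_j$, but the verification is the same in spirit. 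Finally I would note $\lim_k\tau_k=\frac{-1}{\lambda-R+1/R}$ (using $R^2+(2-\lambda)R-1=0$, i.e.\ $\lambda-R=1/R-R$... more precisely $\lambda = R+1/R -$ correction from $\mathcal R_0,\mathcal R_{2h+2}$), which yields $c_{k-1}\to\mathcal H_q$ and connects to the Borel-type statement, exactly paralleling Equation~(\ref{eq:borel-EvenCase}).

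The main obstacle I anticipate is the sharpness argument: the odd case's ``double loop'' means the orbit of the extremal point visits subregions whose top heights are the two interleaved sequences $L_2<L_4<\dots<L_{2h}$ and $L_3<L_5<\dots<L_{2h+1}$ (plus the anomalous endpoints near $-\lambda/2$ and $\phi_{h+1}$), so one must verify the curves $\Theta_{n-1}=c_{k-1}$, $\Theta_n=c_{k-1}$ lie below \emph{all} of these, and separately treat the lowest-lying orbit points (the analogues of $(-\lambda/2,L_1)$ in the even case, now including points near $\phi_h$, $\phi_{h+1}$). The presence of $R$ — an algebraic number satisfying $R^2+(2-\lambda)R-1=0$ rather than the clean value $\lambda-1$ — makes the intersection-point computations messier, and the exceptional small cases $q=5$ (Remarks~\ref{rmkFiveConn},~\ref{rmkFiveImA}) and $q=3$ must be excluded or handled by hand. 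None of this is conceptually new relative to the even case, but it is where the bookkeeping is heaviest.
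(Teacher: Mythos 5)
Your high-level plan is the right one and matches what the paper intends (the paper explicitly says the odd case is ``proved similarly to Theorem~\ref{th: tong even}''). You correctly identify the relevant ingredients (Lemma~\ref{lem: f g odd}, Lemma~\ref{lem: T A D}, Theorem~\ref{th: flushing odd}, Table~\ref{constantSubregionsOddCase}), the reduction to maximizing $m(t,v)=-t/(1+tv)$ on a surviving subregion, the role of the double loop in the sharpness check, and the exceptional cases $q=3,5$. However, there are two concrete gaps.

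\textbf{The extremal corner.}  You assert that the maximum of $\Theta_n$ on the surviving part of $\mathcal A$ is attained at $(\tau_{k-1},\lambda-R)$, justified by the claim that ``the top boundary of $\mathcal A$ there has height $L_{2h+1}=\lambda-R$.''  This is not correct as stated.  The paper notes $\phi_{2h}\le\tfrac{-2}{3\lambda}<\phi_h$, so $\tau_0=\tfrac{-2}{3\lambda}$ lies in $J_{2h}$, where the top of $\Omega$ is $L_{2h}=\lambda-1/R$, not $L_{2h+1}=\lambda-R$ (from $(\mathcal R_{2h+2})$ we get $L_{2h}=\lambda-1/R$, and since $R<1$ one has $L_{2h}<L_{2h+1}$).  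Thus the point $(\tau_{k-1},\lambda-R)$ lies strictly \emph{above} $\Omega$ whenever $\tau_{k-1}<\phi_h$, which holds at least for $k=1$.  Simply transplanting ``upper-left corner of $\mathcal A$'' from the even case therefore does not produce the stated constant; the height $L_{2h+1}$ enters only when the orbit passes through $\mathcal D_4$ (step $h+1$ of a round), so the extremal value must be extracted by comparing $\Theta$ along the whole double loop rather than reading it off at a single corner of $\mathcal A$.  This is precisely the point where the odd case is genuinely harder, and your proposal does not carry it out.

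\textbf{The limit of $\tau_k$.}  Your claimed limit $\lim_k\tau_k=\tfrac{-1}{\lambda-R+1/R}$ is wrong.  From $R^2+(2-\lambda)R-1=0$ one gets $\lambda-R=2-1/R$, so $\lambda-R+1/R=2$ and your expression collapses to $-1/2$; substituting $\tau=-1/2$ into $\tfrac{-\tau}{1+(\lambda-R)\tau}$ yields $R$, not $\mathcal H_q=\tfrac{1}{R+1/R}$.  For the stated formula to converge to $\mathcal H_q$ one needs $\lim_k\tau_k=\tfrac{-1}{\lambda+1/R}$, which is the first coordinate of the fixed point of $\mathcal T^{2h+1}$ (the upper-right corner of $\mathcal D_4$, where the graph of $g$ meets $v=L_{2h+1}$).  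Fixing this is not just cosmetic: identifying the correct fixed point is exactly the step that makes the Borel-type endgame (and the $q$-odd analogue of Lemma~\ref{lem:FixedAndFlushedEven}) work.
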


Note that $\lim_{k\rightarrow \infty}c_k = \mathcal{H}_q$. Due to
this, and reasoning as in the proof of
Lemma~\ref{lem:FixedAndFlushedEven} (here one considers the
fixed-points in ${\mathcal D}$ of ${\mathcal T}^{2h+1}$ instead of
those of ${\mathcal T}^{p-1}$), we get the following result.
\begin{Theorem}
For every $G_q$-irrational $x$ there are infinitely many $n \in \N$, such that
\[\Theta_n(x) \leq \mathcal{H}_q.\]
The constant $\mathcal{H}_q$ is best possible.
\end{Theorem}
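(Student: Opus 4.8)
The plan is to mirror the even case, which was carried out in full detail for Theorem~\ref{th: tong even}, replacing $\tfrac12$ by the odd-case Hurwitz constant $\mathcal H_q$ and $p-1$ by $2h+1$. First I would invoke Theorem~\ref{th: flushing odd}: given any $G_q$-irrational $x$ with $\Theta_n$-values beginning at index $n-1$, either $(t_n,v_n)\notin\mathcal D$ (so already $\min\{\Theta_{n-1},\Theta_n\}<\mathcal H_q$ and we are done), or the $\mathcal T$-orbit enters $\mathcal A$ within one partial round, and then after finitely many rounds the point is flushed from $\mathcal D$. By Theorem~\ref{th: flushing odd}, if the orbit survives $k-1$ rounds then the relevant visit to $\mathcal A$ has first coordinate $t\ge\tau_{k-1}$; combining this with Lemma~\ref{lem: T A D} and the remarks preceding it (that on $\mathcal A\cup\mathcal D_4$ one has $\Theta_{n+1}>\Theta_{n-1}>\Theta_n$, so the minimum over the block is attained at the $\Theta_n$-value at a point of $\mathcal A$) reduces the problem to maximizing $m(t,v)=\dfrac{-t}{1+tv}$ over the portion of $\mathcal A$ with $t\ge\tau_{k-1}$.

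Next I would carry out that optimization exactly as in the even case. The partial derivatives $\partial m/\partial t = -1/(1+tv)^2 < 0$ and $\partial m/\partial v = t^2/(1+tv)^2 > 0$ show $m$ is maximized at the upper-left corner of the relevant region, namely at $(\tau_{k-1},L_{2h+1})=(\tau_{k-1},\lambda-R)$ (using the odd-case relation $L_{2h+1}=\lambda-R$, which follows from $(\mathcal R_0)$ in Theorem~\ref{thm2}). Evaluating gives the asserted bound
\[
\min\{\Theta_{n-1},\ldots,\Theta_{n+k(2h+1)}\}\le m(\tau_{k-1},\lambda-R)=\frac{-\tau_{k-1}}{1+(\lambda-R)\tau_{k-1}}=c_{k-1}.
\]
To upgrade "$\le$" to the strict "$<$" claimed in the statement, and to prove sharpness, I would check, as in the even proof, that the orbit of the corner point $(\tau_{k-1},\lambda-R)$ itself has all its $\Theta$-values in the block $\ge c_{k-1}$: the first $2h$ points in each direction along the orbit either have second coordinate exceeding the threshold $L$-value that makes $\Theta=c_{k-1}$ (using the monotonicity of the $L_j$ and the explicit curves $\Theta_{n-1}=c_{k-1}$, $\Theta_n=c_{k-1}$, i.e.\ the analogues of $f_1,g_1$), or they are one of the finitely many boundary points $(\phi_j,L_{j+1})$ or $(-\lambda/2,L_1)$, which one checks individually using $\partial\Theta_{n-1}/\partial t>0$ on region $\mathcal C$. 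This produces a sequence of $G_q$-irrationals realizing $c_{k-1}$ in the limit, so the constant is best possible.

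The main obstacle is the bookkeeping in the sharpness step: the odd-case orbit of $\mathcal A$ makes a "double loop" through the $\phi_j$ (as described before Theorem~\ref{th: flushing odd}), so the list of boundary points to be checked is longer and less uniform than in the even case, and one must be careful that the intervals $[\phi_{2h},\phi_h)$ and the nesting $-\lambda/2<\phi_1<\cdots<\phi_{2h+1}$ place every such point either strictly above the critical curve or exactly on it without ever falling below. A secondary subtlety is that for $q=5$ the region $\mathcal D$ is connected (Remark~\ref{rmkFiveConn}) and $\mathcal A$ must be defined by hand (Remark~\ref{rmkFiveImA}); I would simply note, as the authors do, that the argument adapts with the obvious modifications and omit the details for $q=5$. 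Finally, passing from Theorem~\ref{th: flushing odd}'s block-minimum statement to the clean inequality requires observing $\lim_{k\to\infty}\tau_k=-1/(\lambda+1/R)\cdot(\text{something})$ forces $\lim_{k\to\infty}c_k=\mathcal H_q$, which is the analogue of Equation~(\ref{eq:borel-EvenCase}) and is a short computation with the periodic continued fraction for $\tau_\infty$.
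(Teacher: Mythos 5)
Your proposal spends almost all of its effort re-deriving the odd-case Tong theorem (the one immediately preceding the statement in question, which the paper already states and whose proof it declares to be ``similar to Theorem~\ref{th: tong even}''), and then tries to deduce the Borel-type theorem in one sentence from that bound plus the observation $\lim_{k\to\infty}c_k=\mathcal H_q$. That final step does not go through, and in fact this is exactly where the paper's argument is different from yours. Knowing that, for every $n$ and $k$,
\[
\min\{\Theta_{n-1},\dots,\Theta_{n+k(2h+1)}\}<c_{k-1}\quad\text{with}\quad c_{k-1}\downarrow\mathcal H_q
\]
only tells you that the $\Theta_m$ have infimum at most $\mathcal H_q$; it does \emph{not} give you an index $m$ with $\Theta_m\leq\mathcal H_q$, since the infimum need not be attained or undercut. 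Similarly, Theorem~\ref{th: flushing odd} is conditional on $(t_n,v_n)$ landing in $\mathcal A$ with $\tau_{k-1}\leq t_n<\tau_k$; it says nothing about the boundary case $t_n\geq\lim_k\tau_k$, i.e.\ the fixed-point locus of $\mathcal T^{2h+1}$ in $\mathcal D$, and a priori a trajectory could stay in $\mathcal D$ forever. Your parenthetical ``and then after finitely many rounds the point is flushed from $\mathcal D$'' is precisely the claim that needs justification and does not follow from Theorem~\ref{th: flushing odd} alone.

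What is missing is the odd-case analogue of Lemma~\ref{lem:FixedAndFlushedEven}: one must show ($i$) that the set of never-flushed points in $\mathcal D$ is exactly the $\mathcal T$-orbit of the fixed point of $\mathcal T^{2h+1}$ in $\mathcal A$, and ($ii$) that $(t_n,v_n)$ can never hit this set, because every $v_n$ is $G_q$-rational whereas the second coordinate of each fixed point is $G_q$-irrational. This yields directly that for every $G_q$-irrational $x$ there are infinitely many $n$ with $(t_n,v_n)\notin\mathcal D$, hence infinitely many $m$ with $\Theta_m\leq\mathcal H_q$ --- that is the existence half of the theorem, obtained without any appeal to the Tong bound. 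The Tong bound and the limit $c_k\to\mathcal H_q$ enter only for sharpness, together with the item ($iv$) of that lemma giving explicit $G_q$-irrationals whose orbit converges to the fixed point from outside $\mathcal D$. You should therefore state and prove the odd analogue of Lemma~\ref{lem:FixedAndFlushedEven} (replacing $\mathcal T^{p-1}$ by $\mathcal T^{2h+1}$ and $(-1/(\lambda+1),\lambda-1)$ by the appropriate fixed point) and derive the theorem from it, rather than from the Tong bound alone.
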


\begin{acknowledgements}
We   thank the referee for a careful reading of this paper.
\end{acknowledgements}

\end{document}